\newtheorem{lem}{Lemma}[section]
\newtheorem{thm}[lem]{Theorem}
\newtheorem{prop}[lem]{Proposition}
\newtheorem{cor}[lem]{Corollary}
\theoremstyle{remark}
\newtheorem{rem}[lem]{Remark}
\newtheorem*{rem*}{Remark}
\theoremstyle{definition}
\newtheorem{defn}[lem]{Definition}
\newtheorem{example}[lem]{Example}
\newtheorem*{notn*}{Notation}
\numberwithin{equation}{section}
\newcommand{\bbf}{\mathbb{F}}
\newcommand{\catmod}{\textbf{mod}}
\newcommand{\sym}{\mathfrak{S}}
\newcommand{\half}{\frac{1}{2}}
\newcommand{\ind}{\mathrm{ind}}
\title{On the decomposition matrix of the partition algebra in positive characteristic}
\author{Oliver King}
\date{}
\begin{document}

\maketitle
\begin{abstract}
	We examine the structure of the partition algebra $P_n(\delta)$ over a field $k$ of characteristic $p>0$. In particular, we describe the decomposition matrix of $P_n(\delta)$ when $n<p$ and when $n=p$ and $\delta=p-1$.
\end{abstract}

\section{Introduction}
	The partition algebra was originally defined by Martin in \cite{martin1994temperley} over $\mathbb{C}$ as a generalisation of the Temperley-Lieb algebra for $\delta$-state $n$-site Potts models in statistical mechanics, and independently by Jones \cite{jones1994potts}. Although this interpretation requires $\delta$ to be integral, it is possible to define the algebra for any $\delta$. It was shown in \cite{xi1999partition} that the partition algebra $P_n(\delta)$ over an arbitrary field $\mathbb{F}$ is a cellular algebra, with cell modules $\Delta_\lambda(n)$ indexed by partitions $\lambda$ of size at most $n$. If we suppose $\delta\neq0$, then in characteristic zero these partitions also label a complete set of non-isomorphic simple modules, given by the heads of the corresponding cell modules. In positive characteristic the simple modules are indexed by the subset of $p$-regular partitions (again under the assumption $\delta\neq0$). It is natural to then ask how the simple modules arise as composition factors of the cell modules. In the case $\mathrm{char}\;\mathbb{F}=0$ this has been entirely resolved by Martin \cite{martin1996structure} and Doran and Wales \cite{doran2000partition}, however there has previously been little investigation into the positive characteristic case. 

	Martin provides in \cite{martin1996structure} a condition on $\lambda$, $\mu$ and $\delta$ for when there is a homomorphism in characteristic zero between cell modules labelled by $\lambda$ and $\mu$, provided $\delta\neq0$. This was strengthened in \cite{doran2000partition} to allow for $\delta=0$. In \cite{bowmanblocks} this condition was reformulated in terms of the reflection geometry of a Weyl group $W$ under a $\delta$-shifted action. By then considering the action of the corresponding affine Weyl group $W^p$, a description of the blocks of the partition algebra in positive characteristic was given.\\~
		
	In this paper we continue to investigate the representations of $P_n^\bbf(\delta)$ when $\mathrm{char}\;\mathbb{F}=p>2$. We show that by placing certain restrictions on the values of $n$, $\delta$ and $p$ we can give a complete description of the decomposition matrices.
	
	In Section 2 we set up the notation and definitions that will be used throughout the paper, and review some previous results. In Section 3 we recall some results regarding the representation theory of the symmetric group, and the abacus method of representing partitions. Section 4 introduces the partition algebra and recalls the block structure in characteristic zero and in prime characteristic. In Section 5 we obtain the decomposition matrix of the partition algebra in positive characteristic, and is separated into three subsections, each dealing with a particular case of the values of $n$ and $\delta$.\\~
	
	When writing this paper, it was brought to the author's attention that the decomposition numbers of the partition algebra $P_n^k(\delta)$ for $n<p$ were obtained independently, and by different methods, by A. Shalile \cite{shalile2014modular}.
	
\subsection*{Notation}
Throughout this paper, we fix a prime number $p>2$ and a $p$-modular system $(K,R,k)$. That is, $R$ is a discrete valuation ring with maximal ideal $P=(\pi)$, field of fractions $\mathrm{Frac}(R)=K$ of characteristic 0, and residue field $k=R/P$ of characteristic $p$. We will use $\bbf$ to denote either $K$ or $k$.

We also fix a parameter $\delta\in R$ and assume that its image in $k$ is non-zero. We will use $\delta$ to denote both the element in $R$ and its projection in $k$.
	
\section{Preliminaries}

Suppose $A$ is an $R$-algebra, free and of finite rank as an $R$-module. We can extend scalars to produce the $K$-algebra \mbox{$KA=K\otimes_RA$} and the $k$-algebra \mbox{$kA=k\otimes_RA$}. Given an $A$-module $M$, we can then also consider the $KA$-module $KM=K\otimes_RM$ and the $kA$-module $kM=k\otimes_RM$.

The following lemma shows that we can reduce $K$-module homomorphisms to $k$-module homomorphisms.
	\begin{lem}
		Suppose $X,Y$ are $R$-free $A$-modules of finite rank and let $M\subseteq KY$. If\linebreak \mbox{$\mathrm{Hom}_{K}(KX,KY/M)\neq0$} then there is a submodule $N\subseteq kY$ such that $\mathrm{Hom}_{k}(kX,kY/N)\neq0$. Moreover, $N$ is the $p$-modular reduction of a lattice in $M$.
		\label{lem:modred}
	\end{lem}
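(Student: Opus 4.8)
The plan is to start from a nonzero homomorphism over $K$, rescale it by a suitable power of the uniformiser $\pi$ so that it restricts to a map of $R$-lattices whose reduction modulo $\pi$ is forced to be nonzero, and then identify the reduced target with a quotient $kY/N$ of the required form.

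First I would single out the natural lattices involved. Let $q\colon KY\to KY/M$ be the quotient map and set $L=Y\cap M$. Since $Y$ is an $R$-lattice in $KY$, every element of $M$ has an $R$-multiple lying in $Y$, so $KL=M$ and $L$ is a (full) lattice in $M$. Its reduction $N:=\mathrm{im}(k\otimes_R L\to k\otimes_R Y=kY)$ is an $A$-submodule of $kY$, and applying the right-exact functor $k\otimes_R-$ to the short exact sequence $0\to L\to Y\to Y/L\to 0$ identifies $k\otimes_R(Y/L)\cong kY/N$. Note also that $q(Y)=(Y+M)/M\cong Y/L$ is a lattice in $KY/M$; write $\overline{Y}=q(Y)$.

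Next, take a nonzero $\phi\in\mathrm{Hom}_K(KX,KY/M)$. Because $X$ is a finitely generated $R$-module spanning $KX$, the image $\phi(X)$ is a nonzero finitely generated $R$-submodule of $KY/M$, so there is an integer $a$ with $\pi^a\phi(X)\subseteq\overline{Y}$, and the set of such $a$ is bounded below since $\overline{Y}$ is a lattice and $\phi(X)\neq0$. Choosing $a$ minimal and putting $\psi=\pi^a\phi$, we obtain an $A$-module homomorphism $\psi\colon X\to\overline{Y}$ with $\psi(X)\subseteq\overline{Y}$ but $\psi(X)\not\subseteq\pi\overline{Y}$. Reducing modulo $\pi$ gives $\overline{\psi}=k\otimes_R\psi\colon kX\to k\otimes_R\overline{Y}\cong kY/N$, a homomorphism of $kA$-modules, and the condition $\psi(X)\not\subseteq\pi\overline{Y}$ guarantees $\overline{\psi}\neq0$. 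Hence $\mathrm{Hom}_k(kX,kY/N)\neq0$ with $N$ the reduction of the lattice $L\subseteq M$, as required.

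The delicate points are the bookkeeping ones: one must verify that the minimal scaling really exists (so that the reduced map is nonzero rather than identically zero), and that, because $k\otimes_R-$ is merely right exact, the target $k\otimes_R\overline{Y}$ is correctly identified with the full quotient $kY/N$ and not with a proper submodule of it. The main obstacle I anticipate is precisely this control of the reduction: ensuring that rescaling by $\pi^a$ does not wash out the homomorphism modulo $\pi$, which is exactly what the choice of $a$ as the smallest exponent placing the image inside $\overline{Y}$ is designed to achieve.
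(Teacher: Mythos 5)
Your argument is correct and follows essentially the same route as the paper's proof: rescale the nonzero $K$-homomorphism by the minimal power of $\pi$ so that its image lands in the lattice $q(Y)$ but not in $\pi q(Y)$, then reduce modulo $\pi$ and identify the target with $kY/N$ for $N$ the reduction of the lattice $L=Y\cap M$. The only cosmetic difference is that the paper identifies $k\rho(Y)$ with $kY/kL$ by showing the sequence $0\to L\to Y\to\rho(Y)\to 0$ splits (using freeness of $\rho(Y)$ over the DVR), whereas you use right-exactness of $k\otimes_R-$ and the automatic saturation of $L=Y\cap M$ in $Y$; both yield the same conclusion.
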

	
		\begin{proof}
			Let \mbox{$Q=KY/M$} be the image of the canonical quotient map
			\linebreak
			\mbox{$\rho:KY\rightarrow KY/M$}, and let 
			$f\in\mathrm{Hom}_{K}(KX,Q)$ be non-zero. Note that $\rho(Y)$ is a lattice in $Q$, since $Y$ has finite rank and \mbox{$K\rho(Y)=\rho(KY)=Q$}. As $X$ and $Y$ are modules of finite rank we may assume that 
				\begin{equation}
					f(X)\subseteq \rho(Y)\text{~~~but~~~}f(X)\nsubseteq \pi \rho(Y)
					\label{eqn:pires}
				\end{equation}
for instance by considering the matrix of $f$ and multiplying the coefficients by an appropriate power of $\pi$.\\
Then $f$ restricts to a homomorphism $X\rightarrow\rho(Y)$, and induces a homomorphism $\overline{f}:kX\rightarrow k\rho(Y)$. This must be non-zero since we can find $x\in X$ such that $f(x)\in \rho(Y)\backslash\pi \rho(Y)$ by \eqref{eqn:pires}.\\

			It remains to prove that $k\rho(Y)$ can be taken to be $kY/N$ for some $N\subset kY$, the modular reduction of a lattice in $M$. We have the following maps:
				\begin{figure}[H]				
				\centering
					\begin{tikzpicture}[>=angle 60]
						\draw (0,3) node {0};
						\draw [->] (0.25,3)--(1.25,3);
						\draw (1.5,3) node {$M$};
						\draw [->] (1.75,3)--(2.75,3);
						\draw (3.15,3) node {$KY$};
						\draw [->>] (3.5,3)--node[above] {\small$\rho$}(4.5,3);
						\draw (4.75,3) node {$Q$};
						\draw [<-] (5,3)--node[above] {\small$f$}(6,3);
						\draw (6.45,3) node {$KX$};
						
						\draw (3.15,2.5) node {$\cup$};
						\draw (4.75,2.5) node {$\cup$};
						\draw (6.45,2.5) node {$\cup$};
						
						\draw (0,2) node {0};
						\draw [->] (0.25,2)--(1.25,2);
						\draw (1.5,2) node {$L$};
						\draw [->] (1.75,2)--(2.9,2);
						\draw (3.15,2) node {$Y$};
						\draw [->>] (3.35,2)--(4.3,2);
						\draw (4.75,2) node {$\rho(Y)$};
						\draw (6.45,2) node {$X$};
						
						\draw [->>] (3.15,1.75)--(3.15,1);
						\draw [->>] (4.75,1.75)--(4.75,1);
						\draw [->>] (6.45,1.75)--(6.45,1);

						\draw (3.15,0.75) node {$kY$};
						\draw [->>] (3.4,0.75)--(4.2,0.75);
						\draw (4.75,0.75) node {$k\rho(Y)$};
						\draw [<<-] (5.3,0.75)--node[above] {\small$\bar f$}(6.1,0.75);
						\draw (6.45,0.75) node {$kX$};
					\end{tikzpicture}
				\end{figure}
			where $L=\mathrm{Ker}(Y\longrightarrow\rho(Y))$\\
			\\
			The $K$-module $Q$ is torsion free. Therefore as an $R$-module, $\rho(Y)\subseteq Q$ must also be torsion free. Since $R$ is a principal ideal domain (by definition of it being a discrete valuation ring), the structure theorem for modules over a PID tells us that $\rho(Y)$ must be free. It is therefore projective, and the exact sequence
				\[0\longrightarrow L\longrightarrow Y\longrightarrow \rho(Y)\longrightarrow0\]
			is split.
			Then since the functors $K\otimes_R-$ and $k\otimes_R-$ preserve split exact sequences, we deduce that $M\cong KL$ and we can set $N=kL$ to complete the exact sequence
				\[0\longrightarrow N\longrightarrow kY\longrightarrow k\rho(Y)\longrightarrow0\]
satisfying the requirements above.
		\end{proof}

\section{Representation theory of the symmetric group}\label{sec:symmetricgroup}
A more detailed account of the results in this section can be found in \cite{james1981representation}.

\subsection*{Partitions}

For any natural number $n$, we define a partition $\lambda=(\lambda_1,\lambda_2,\dots)$ to be a weakly decreasing sequence of non-negative integers such that $\sum_{i\geq1}\lambda_i=n$. These conditions imply that $\lambda_i=0$ for $i\gg0$, hence we will often truncate the sequence and write $\lambda=(\lambda_1,\dots,\lambda_l)$, where $\lambda_l\neq0$ and $\lambda_{l+1}=0$. We also combine repeated entries and use exponents, for instance the partition $(5,5,3,2,1,1,0,0,0,\dots)$ of $17$ will be written $(5^2,3,2,1^2)$. We use the notation $\lambda\vdash n$ to mean $\lambda$ is a partition of $n$.
We let $\Lambda_n$ be the set of all partitions of $n$, and define the following set
	\begin{align}\label{eq:partsets}
		\Lambda_{\leq n}&=\bigcup_{0\leq i \leq n}\Lambda_n
	\end{align}

We say that a partition $\lambda=(\lambda_1,\dots,\lambda_l)$ is \emph{$p$-singular} if there exists $t$ such that
	\[\lambda_t=\lambda_{t+1}=\dots=\lambda_{t+p-1}>0\]
i.e. some (non-zero) part of $\lambda$ is repeated $p$ or more times.
Partitions that are not $p$-singular we call \emph{$p$-regular}. We let $\Lambda_n^\ast$ be the subset of $\Lambda_n$ of all $p$-regular partitions of $n$, and similarly define the set 
	\begin{align}
		\Lambda^\ast_{\leq n}&=\bigcup_{0\leq i \leq n}\Lambda^\ast_n\label{eq:regpartsets}
	\end{align}

There exists a partial order on the set $\Lambda_{\leq n}$ called the \emph{dominance order with size}, denoted by $\leq_d$. We say a partition $\lambda$ is less than or equal to $\mu$ under this order if either $|\lambda|<|\mu|$, or $|\lambda|=|\mu|$ and $\displaystyle\sum_{i=1}^j\lambda_i\leq\sum_{i=1}^j\mu_i$ for all $j\geq1$. We write $\lambda<_d\mu$ to mean $\lambda\leq_d\mu$ and $\lambda\neq\mu$. 

To each partition $\lambda$ we may associate the Young diagram 
	\[[\lambda]=\{(x,y)~|~x,y\in\mathbb{Z},~1\leq x\leq l,~1\leq y \leq \lambda_x\}\]
An element $(x,y)$ of $[\lambda]$ is called a \emph{node}. If $\lambda_{i+1}<\lambda_{i}$, then the node $(i,\lambda_i)$ is called a \emph{removable} node of $\lambda$. If $\lambda_{i-1}>\lambda_i$, then we say the node $(i,\lambda_i+1)$ of $[\lambda]\cup\{(i,\lambda_i+1)\}$ is an \emph{addable} node of $\lambda$. This is illustrated in Figure \ref{fig:youngdiag} below. If a partition $\mu$ is obtained from $\lambda$ by removing a removable (resp. adding an addable) node then we write $\mu\triangleleft\lambda$ (resp. $\mu\triangleright\lambda$).
	
Each node $(x,y)$ of $[\lambda]$ has an associated integer, called the \emph{content}, given by $y-x$.

	\begin{figure}[H]
		\centering
		\begin{tikzpicture}[scale=0.5]
			\foreach \x in {0,...,4}
				{\draw (\x,5) rectangle +(1,-1);}
			\foreach \x in {0,...,4}
				{\draw (\x,4) rectangle +(1,-1);}
			\foreach \x in {0,...,2}
				{\draw (\x,3) rectangle +(1,-1);}
			\draw (0,2) rectangle +(1,-1);
			\draw (1,2) rectangle +(1,-1);
			\draw (0,1) rectangle +(1,-1);
			\draw (0,0) rectangle +(1,-1);
			
			\draw (4.5,3.45) node {$r$};
			\draw (2.5,2.45) node {$r$};
			\draw (1.5,1.45) node {$r$};
			\draw (0.5,-0.55) node {$r$};

			\draw (5.5,4.45) node {$a$};
			\draw (3.5,2.45) node {$a$};
			\draw (2.5,1.45) node {$a$};
			\draw (1.5,0.45) node {$a$};
			\draw (0.5,-1.55) node {$a$};
		\end{tikzpicture}
		\caption{The Young diagram of $\lambda=(5^2,3,2,1^2)$. Removable nodes are marked by $r$ and addable nodes by $a$.}
		\label{fig:youngdiag}
	\end{figure}
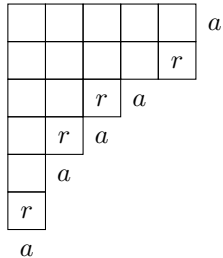

\subsection*{Abacus}

We recall the abacus method of constructing partitions from \cite[Chapter 2.7]{james1981representation}. To each partition and prime number $p$ we assoiate an abacus diagram, consisting of $p$ columns, known as runners, and a configuration of beads across these. By convention we label the runners from left to right, starting with 0, and the positions on the abacus are also numbered from left to right, working down from the top row, starting with $0$ (see Figure \ref{fig:symabacus}). Given a partition $\lambda=(\lambda_1,\dots,\lambda_l)\vdash n$, fix a positive integer $b\geq n$ and construct the $\beta$-sequence of $\lambda$, defined to be
	\[\beta(\lambda,b)=(\lambda_1-1+b,\lambda_2-2+b,\dots,\lambda_l-l+b,-(l+1)+b,\dots2,1,0)\]
Then place a bead on the abacus in each position given by $\beta(\lambda,b)$, so that there are a total of $b$ beads across the runners. Note that for a fixed value of $b$, the abacus is uniquely determined by $\lambda$, and any such abacus arrangement corresponds to a partition simply by reversing the above. Here is an example of such a construction:
	\begin{example}
		In this example we will fix the values $p=5,n=9,b=10$ and represent the partition $\lambda=(5,4)$ on the abacus. Following the above process, we first calculate the $\beta$-sequence of $\lambda$:
			\begin{eqnarray*}
				\beta(\lambda,10)	&=&(5-1+10,~4-2+10,\;-3+10,\;-4+10,\dots,\;-9+10,\;-10+10)\\
							&=&(14,12,7,6,5,4,3,2,1,0)
			\end{eqnarray*}
		The next step is to place beads on the abacus in the corresponding positions. We also number the beads, so that bead $1$ occupies position $\lambda_1-1+b$, bead $2$ occupies position $\lambda_2-2+b$ and so on. The labelled spaces and the final abacus are shown below.
			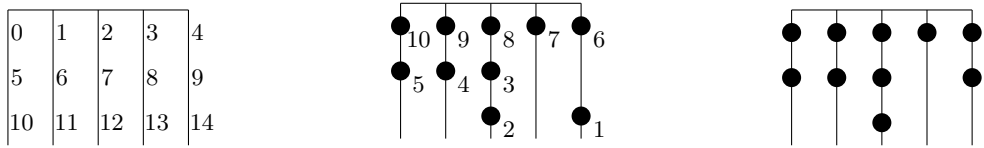
\begin{figure}[H]
				\centering
				\begin{tikzpicture}[scale=0.6]
					\draw (0,3)--(4,3);
					\foreach \x in {0,...,4}
						{\draw (\x,3)--(\x,0);
						\draw (\x+0.2,2.5) node {\small$\x$};}
					\foreach \x in {5,...,9}
						{\draw (\x-5+0.2,1.5) node {\small$\x$};}
					\foreach \x in {10,...,14}
						{\draw (\x-10+0.3,0.5) node {\small$\x$};}
				\end{tikzpicture}
				\hspace{2cm}
				\begin{tikzpicture}[scale=0.6]
					\draw (0,3)--(4,3);
					\foreach \x in {0,...,4}
						{\draw (\x,3)--(\x,0);}
					\foreach \x in {0,...,4}
						{\fill[black] (\x,2.5) circle (6pt);}
					\foreach \x in {0,1,2}
						{\fill[black] (\x,1.5) circle (6pt);}
					\foreach \x in {2,4}
						{\fill[black] (\x,0.5) circle (6pt);}
						
					\foreach \x in {10,...,6}
						{\draw (10-\x+0.4,2.5-0.3) node {\small$\x$};}
					\foreach \x in {5,4,3}
						{\draw (5-\x+0.4,1.5-0.3) node {\small$\x$};}
					\draw (2.4,0.2) node {\small$2$};
					\draw (4.4,0.2) node {\small$1$};
				\end{tikzpicture}
					\hspace{2cm}
				\begin{tikzpicture}[scale=0.6]
					\draw (0,3)--(4,3);
					\foreach \x in {0,...,4}
						{\draw (\x,3)--(\x,0);}
					\foreach \x in {0,...,4}
						{\fill[black] (\x,2.5) circle (6pt);}
					\foreach \x in {0,1,2,4}
						{\fill[black] (\x,1.5) circle (6pt);}
					\foreach \x in {2}
						{\fill[black] (\x,0.5) circle (6pt);}
				\end{tikzpicture}
				\caption{The positions on the abacus with 5 runners, the arrangement of beads (numbered) representing $\lambda=(5,4)$, and the corresponding 5-core.}\label{fig:symabacus}
			\end{figure}
	\end{example}
After fixing values of $p$ and $b$, we will abuse notation and write $\lambda$ for both the partition and the corresponding abacus with $p$ runners and $b$ beads. We then also define\linebreak \mbox{$\Gamma(\lambda,b)=(\Gamma(\lambda,b)_0,\Gamma(\lambda,b)_1,\dots,\Gamma(\lambda,b)_{p-1})$}, where 
	\begin{equation}\label{eq:gammaabacus}
		\Gamma(\lambda,b)_i=\big|\{j:\beta(\lambda,b)_j\equiv i\text{ (mod }p)\}\big|
	\end{equation}
so that $\Gamma(\lambda,b)$ records the number of beads on each runner of the abacus of $\lambda$.

We define the \emph{$p$-core} of a partition $\lambda$ to be the partition $\mu$ whose abacus is obtained from that of $\lambda$ by sliding all beads as far up their runners as possible. Note then that $\Gamma(\mu,b)=\Gamma(\lambda,b)$. It is shown in \cite[Chapter 2.7]{james1981representation} that this is independent of the choice of $b$. An example of this can be seen in Figure \ref{fig:symabacus}.

\subsection*{Specht Modules}

The algebra $R\sym_n$ is a cellular algebra, as shown in \cite{graham1996cellular}. The cell modules are labelled by the partitions of $n$, and are more commonly known as Specht modules. We denote the Specht module indexed by $\lambda$ by $S^\lambda_R$. These can be constructed explicitly, see for example \cite{james1978representation}. We then define the $K\sym_n$-module $S^\lambda_K=K\otimes_RS^\lambda_R$ and the $k\sym_n$-module $S^\lambda_k=k\otimes_RS^\lambda_R$.

	\begin{thm}[{\cite[Theorem 4.12]{james1978representation}}]
		The set $\{S^\lambda_K : \lambda\in\Lambda_n\}$ is a complete set of pairwise non-isomorphic simple $K\sym_n$-modules.
	\end{thm}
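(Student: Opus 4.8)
The plan is to exploit the cellular structure of $R\sym_n$ recalled above together with the fact that $K$ has characteristic zero. The statement has three parts that I would treat in turn: that each $S^\lambda_K$ is simple, that distinct partitions yield non-isomorphic modules, and that every simple $K\sym_n$-module arises in this way.

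First I would establish simplicity. Since $\Char K=0$ and $K$ is a field, Maschke's theorem gives that $K\sym_n$ is semisimple, because $p(n!)$ is invertible in $K$. Now recall from the theory of cellular algebras that each cell (Specht) module $S^\lambda_K$ carries a canonical symmetric bilinear form $\phi_\lambda$ invariant under the algebra action, and that the quotient $S^\lambda_K/\mathrm{rad}\,\phi_\lambda$ is either zero or simple. For a semisimple cellular algebra every cell form is non-degenerate, so here $\mathrm{rad}\,\phi_\lambda=0$ for all $\lambda$ and each $S^\lambda_K$ coincides with its simple quotient. Concretely, one may verify non-degeneracy by hand: $\phi_\lambda$ is the restriction to $S^\lambda_K$ of the standard inner product on the permutation module $M^\lambda$ for which the tabloids are orthonormal; this form is positive definite over $\mathbb{Q}$, hence so is its restriction, and its Gram matrix in the standard polytabloid basis has integer entries with nonzero determinant, which remains nonzero in $K$. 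Equivalently one can run James's submodule theorem directly: any nonzero submodule of the cyclic module $S^\lambda_K$ either contains $S^\lambda_K$ or lies in $S^\lambda_K\cap(S^\lambda_K)^\perp=\mathrm{rad}\,\phi_\lambda=0$, forcing irreducibility.

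Next I would prove pairwise non-isomorphism. The key input is that a nonzero homomorphism $S^\lambda_K\to M^\mu$ forces $\lambda\geq_d\mu$ in the dominance order. Given an isomorphism $S^\lambda_K\cong S^\mu_K$, composing with the inclusion $S^\mu_K\hookrightarrow M^\mu$ produces a nonzero map $S^\lambda_K\to M^\mu$, so $\lambda\geq_d\mu$; the symmetric argument gives $\mu\geq_d\lambda$, and hence $\lambda=\mu$. (In the cellular framework this is automatic, since distinct labels give non-isomorphic simple quotients, but the dominance argument is the transparent way to see it.)

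Finally, completeness. Having produced $|\Lambda_n|$ pairwise non-isomorphic simple $K\sym_n$-modules, it suffices to count: $K$ is a splitting field for $\sym_n$ (indeed $\mathbb{Q}$ already is), so the number of isomorphism classes of simple $K\sym_n$-modules equals the number of conjugacy classes of $\sym_n$, which is exactly $|\Lambda_n|$. Alternatively, the identity $\sum_{\lambda\in\Lambda_n}(\dim_K S^\lambda_K)^2=n!=\dim_K K\sym_n$, combined with semisimplicity and Wedderburn's theorem, confirms that no simple modules have been missed. I expect the main obstacle to be the non-degeneracy of the cell forms in characteristic zero, since this is precisely the content that upgrades semisimplicity of the whole algebra to simplicity of each individual Specht module; the non-isomorphism and counting steps are then routine dominance-order bookkeeping and a standard splitting-field count.
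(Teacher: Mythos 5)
The paper gives no proof of this statement—it is quoted directly from James \cite[Theorem 4.12]{james1978representation}—and your argument is essentially the standard one found there: irreducibility via the non-degeneracy of the invariant bilinear form (equivalently the submodule theorem) in characteristic zero, pairwise non-isomorphism via the dominance-order comparison through $M^\mu$, and completeness by counting against the conjugacy classes. The argument is correct; the only blemish is the stray ``$p(n!)$'' where you mean simply that $n!$ is invertible in $K$.
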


	\begin{thm}[{\cite[Theorem 11.5]{james1978representation}}]
		For $\lambda\in\Lambda^\ast_n$, the Specht module $S^\lambda_k$ has simple head $D^\lambda_k$. The set $\{D^\lambda_k : \lambda\in\Lambda^\ast_n\}$ is a complete set of pairwise non-isomorphic simple $k\sym_n$-modules.
	\end{thm}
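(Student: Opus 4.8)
The plan is to follow James's original strategy, which is built around a $\sym_n$-invariant symmetric bilinear form on the Specht module. First I would realise $S^\lambda_k$ as a submodule of the permutation module $M^\lambda_k$ spanned by the $\lambda$-tabloids, with $S^\lambda_k$ spanned by the polytabloids $e_t$. The module $M^\lambda_k$ carries a natural $\sym_n$-invariant form $\langle-,-\rangle$ for which the tabloids form an orthonormal basis, and restriction to $S^\lambda_k$ gives the \emph{James form}. The central technical input is the \emph{Submodule Theorem}: for any $k\sym_n$-submodule $U\subseteq M^\lambda_k$, either $S^\lambda_k\subseteq U$ or $U\subseteq(S^\lambda_k)^\perp$. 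Its proof rests on the identity that applying the signed column-sum $\kappa_t$ to any $u\in M^\lambda_k$ returns $u\kappa_t=\langle u,e_t\rangle\,e_t$: if some $u\in U$ pairs non-trivially with a polytabloid then $e_t\in U$, whence $S^\lambda_k\subseteq U$; otherwise $U$ lies in the radical of the form.

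Granting the Submodule Theorem, I would define $D^\lambda_k=S^\lambda_k/\mathrm{rad}\langle-,-\rangle$, where the radical is $S^\lambda_k\cap(S^\lambda_k)^\perp$. Lifting any submodule of $D^\lambda_k$ to a submodule of $S^\lambda_k$ lying above the radical and applying the Submodule Theorem shows immediately that $D^\lambda_k$ is either zero or simple, and that when non-zero it is precisely the head of $S^\lambda_k$.

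Next I would determine when $D^\lambda_k\neq0$, equivalently when the James form is not identically zero. This is where the hypothesis $\lambda\in\Lambda^\ast_n$ enters: for $p$-regular $\lambda$ one exhibits a polytabloid $e_t$ whose self-pairing $\langle e_t,e_t\rangle$ is a unit in $k$, while for $p$-singular $\lambda$ a divisibility argument forces the entire form into the radical. I expect this Gram-matrix analysis to be the main obstacle, since it demands a careful combinatorial choice of tableau and control of the diagonal entries via the orbit structure of the column stabilisers modulo $p$.

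Finally, for distinctness and completeness I would establish the unitriangularity of the decomposition data: $[S^\lambda_k:D^\lambda_k]=1$ and every other composition factor $D^\mu_k$ of $S^\lambda_k$ satisfies $\mu>_d\lambda$. Arranging the multiplicities $[S^\lambda_k:D^\mu_k]$ for $\mu\in\Lambda^\ast_n$ into a matrix indexed by partitions, this triangularity makes the classes $[D^\mu_k]$ linearly independent in the Grothendieck group, so the $D^\lambda_k$ with $\lambda\in\Lambda^\ast_n$ are pairwise non-isomorphic. To see that they exhaust the simple modules I would count: by Brauer's theory the number of simple $k\sym_n$-modules equals the number of $p$-regular conjugacy classes, namely the number of partitions of $n$ with no part divisible by $p$; Glaisher's bijection identifies these with the $p$-regular partitions $\Lambda^\ast_n$, so the constructed family already has the correct cardinality and is therefore complete.
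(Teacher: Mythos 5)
This is a theorem the paper only cites (it is Theorem 11.5 of James's book), so the comparison is with James's own argument, and your skeleton is precisely that argument: $S^\lambda_k$ inside the tabloid module $M^\lambda_k$ with its invariant form, the Submodule Theorem via the identity $u\kappa_t=\langle u,e_t\rangle e_t$, the definition $D^\lambda_k=S^\lambda_k/\bigl(S^\lambda_k\cap (S^\lambda_k)^\perp\bigr)$, and unitriangularity of the decomposition data. Your completeness step (Brauer's count of $p$-regular classes plus Glaisher's bijection) is a legitimate variant; James instead filters the regular module by the $M^\lambda_k$ and shows by induction on dominance that every composition factor of $M^\lambda_k$ is some $D^\mu_k$ with $\mu$ $p$-regular, which avoids any block-theoretic input. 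Either route closes the argument.

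The one step that fails as written is your non-degeneracy criterion. You propose, for $p$-regular $\lambda$, to exhibit a polytabloid with unit self-pairing $\langle e_t,e_t\rangle$. But $\kappa_t^2=|C_t|\,\kappa_t$ together with $R_t\cap C_t=\{1\}$ gives $\langle e_t,e_t\rangle=|C_t|=\prod_i\lambda_i'!$ for \emph{every} tableau $t$; this is a unit in $k$ only when $\lambda$ has fewer than $p$ parts, a strictly stronger condition than $p$-regularity. For instance $\lambda=(2,2,1)$ with $p=3$ is $3$-regular, yet every self-pairing equals $3!\cdot 2!=12\equiv 0$. The form must be shown non-zero via a cross term: James chooses two specific tableaux $t_1,t_2$ with $\langle e_{t_1},e_{t_2}\rangle=\pm\prod_j r_j!$, where $r_j$ is the number of parts of $\lambda$ equal to $j$, and this is a unit exactly when $\lambda$ is $p$-regular; the converse direction (the form vanishes identically on $S^\lambda_k$ for $p$-singular $\lambda$) is the divisibility statement you allude to. With that correction the rest of your outline goes through.
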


	The blocks of the algebra $k\sym_n$ correspond to the $p$-cores of partitions in the following way.

	\begin{thm}[Nakayama's Conjecture, {\cite[Chapter 6]{james1981representation}}]
		Two partitions $\lambda,\mu\in\Lambda_n$ label Specht modules in the same block of $k\sym_n$ if and only if they have the same $p$-core, that is $\Gamma(\lambda,b)=\Gamma(\mu,b)$ for some (and hence all) $b\geq n$.
		\label{thm:nakayama}
	\end{thm}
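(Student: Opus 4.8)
The plan is to translate the block-theoretic statement into a purely combinatorial one via the theory of central characters, and then to establish the combinatorial equivalence using the Murnaghan--Nakayama rule together with the abacus description of $p$-cores. First I would pass to ordinary characters. The Specht module $S^\lambda_k$ is the $p$-modular reduction of the $R$-lattice $S^\lambda_R$, whose extension $S^\lambda_K$ is the irreducible $K\sym_n$-module affording the ordinary character $\chi^\lambda$; consequently $S^\lambda_k$ lies in the $p$-block of $k\sym_n$ determined by $\chi^\lambda$, and two Specht modules lie in the same block of $k\sym_n$ exactly when the corresponding ordinary characters lie in the same $p$-block. The theorem is therefore equivalent to the assertion that $\chi^\lambda$ and $\chi^\mu$ lie in the same $p$-block if and only if $\lambda$ and $\mu$ have the same $p$-core. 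I would then invoke the standard classification of $p$-blocks of a finite group by central characters: writing $\omega_\lambda(C)=|C|\chi^\lambda(g_C)/\chi^\lambda(1)\in R$ for the central-character value on the class sum of a class $C$, the characters $\chi^\lambda$ and $\chi^\mu$ lie in the same $p$-block precisely when $\omega_\lambda(C)\equiv\omega_\mu(C)\pmod{P}$ for every conjugacy class $C$. This handles both implications at once, reducing the theorem to showing that these congruences hold for all $C$ if and only if $\lambda$ and $\mu$ have the same $p$-core.

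The combinatorial engine is the abacus. By definition two partitions of $n$ share a $p$-core exactly when $\Gamma(\lambda,b)=\Gamma(\mu,b)$, i.e.\ their abaci carry the same number of beads on each runner. I would use the classical dictionary between the abacus and rim hooks: removing a rim $p$-hook from $\lambda$ corresponds to sliding a single bead one position up its runner, so the $p$-core is obtained by sliding every bead as far up as possible, and the number of hooks removed (the $p$-weight $w$) is independent of the order of removal. The crucial content calculation is that the $p$ nodes of any rim $p$-hook have contents forming $p$ consecutive integers, hence a complete set of residues modulo $p$; thus each removal of a rim $p$-hook deletes exactly one node of each content-residue class. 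It follows that, for fixed $n$, the multiset of node-contents taken modulo $p$ depends only on the $p$-core, and conversely determines it.

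The representation-theoretic step is to evaluate the central characters and read off this residue data. I would apply the Murnaghan--Nakayama rule to the class sums. For a class $C$ whose cycle type has a part divisible by $p$, the rule expresses $\chi^\lambda(g_C)$ as a signed sum over partitions obtained from $\lambda$ by stripping a rim $p$-hook, so that after reduction modulo $P$ the value $\omega_\lambda(C)$ is governed entirely by the hook-removal combinatorics, and hence, by the content calculation above, only by the $p$-core. Checking that the resulting congruences on such $p$-singular classes, combined with those on the remaining classes, hold for all $C$ simultaneously exactly when the runner bead-counts $\Gamma(\lambda,b)$ agree then yields both directions of the theorem.

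The main obstacle is precisely this last extraction step: one must verify that the mod-$P$ reductions of the central-character values recover the bead-counts $\Gamma(\lambda,b)$ and nothing coarser or finer. This requires a careful induction with the Murnaghan--Nakayama rule, keeping track of signs and of the $p$-divisibility of the class sizes $|C|$, together with a matching of the two natural invariants attached to a block, namely the central character modulo $P$ on the one hand and the multiset of content-residues modulo $p$ on the other. Since this is the classical content of Nakayama's Conjecture, a fully self-contained treatment would essentially reproduce the block-theoretic development of \cite[Chapter 6]{james1981representation}, to which I would ultimately appeal for the delicate congruences.
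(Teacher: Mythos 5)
The paper offers no proof of this statement: it is quoted as Nakayama's Conjecture with a citation to \cite[Chapter 6]{james1981representation}, so there is no internal argument to compare against. Your sketch follows precisely the classical route of that reference --- reduce to ordinary characters via $p$-modular reduction of the Specht lattices, invoke the central-character criterion $\omega_\lambda(C)\equiv\omega_\mu(C) \pmod{P}$ for membership in a common $p$-block, evaluate the characters with the Murnaghan--Nakayama rule, and translate rim-$p$-hook removal into bead moves on the abacus. The outline is sound, and the combinatorial inputs you identify are the right ones: rim $p$-hooks have contents running over all residues modulo $p$, so the multiset of content-residues of a partition of $n$ is an invariant of its $p$-core (and, for fixed $n$, determines it). However, the step you flag as the ``main obstacle'' is not a technicality but the entire content of the theorem: showing that the congruences on central characters for \emph{all} classes are equivalent to equality of the bead counts $\Gamma(\lambda,b)$ is the Brauer--Robinson argument, and you explicitly defer it to the reference. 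As it stands your proposal is therefore a proof by citation --- which is exactly the status the theorem has in the paper --- rather than a self-contained proof. If a genuine argument were demanded, the cleanest way to close the gap is to observe that the central character values modulo $P$ are symmetric functions of the Jucys--Murphy contents, so the block invariant is exactly the multiset of contents modulo $p$, which by your own content calculation is equivalent to the $p$-core; otherwise one must carry out the delicate class-by-class congruence analysis of \cite[Chapter 6]{james1981representation}.
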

	
\section{The partition algebra}

For a fixed $n\in\mathbb{N}$ and $\delta\in R$, we define the partition algebra $P_n^R(\delta)$ to be the set of linear combinations of set-partitions of $\{1,2,\dots,n,\bar1,\bar2,\dots,\bar n\}$. We call each part of a set-partition a \emph{block}. For instance,
	\[\big\{\{1,3,\bar3,\bar4\},\{2,\bar1\},\{4\},\{5,\bar2,\bar5\}\big\}\]
is a set-partition with $n=5$ consisting of 4 blocks. Any block with $\{i,\bar j\}$ as a subset for some $i$ and $j$ is called a \emph{propagating block}.

We can represent each set-partition by an \emph{$(n,n)$-partition diagram}, consisting of two rows of $n$ nodes with arcs between nodes in the same block. Multiplication in the partition algebra is by concatenation of diagrams in the following way: to obtain the result $x\cdot y$ given diagrams $x$ and $y$, place $x$ on top of $y$ and identify the bottom nodes of $x$ with those on top of $y$. This new diagram may contain a number, $t$ say, of blocks in the centre not connected to the northern or southern edges of the diagram. These we remove and multiply the final result by $\delta^t$. An example is given in Figure \ref{fig:partmult} below.

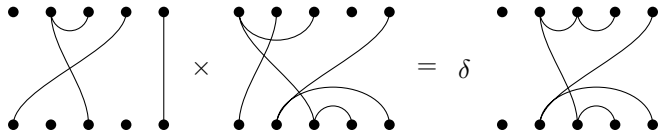
\begin{figure}[H]
	\centering
	\begin{tikzpicture}[scale=0.5]
	 \foreach \x in {1,...,5,7,8,...,11,14,15,...,18}
	 	{\fill[black] (\x,0) circle (4pt);
	 	\fill[black] (\x,3) circle (4pt);}
	 	
	\draw (3,3) arc (0:-180:0.5) .. controls (2,2) and (3,1) .. (3,0);
	\draw (4,3) .. controls (4,2) and (1,1)..(1,0);
	\draw (5,3)--(5,0);
	\draw (6,1.5) node {$\times$};
	\draw (9,3) arc (0:-180:1 and 0.75) ..controls (7,2) and (9,1).. (9,0) arc (180:0:0.5);
	\draw (8,3).. controls (8,2) and (7,1)..(7,0);
	\draw (11,0) arc (0:180:1.5 and 1) ..controls (8,1) and (11,2).. (11,3);
	\draw (12,1.5) node {$=$};\draw (13,1.5) node {$\delta$};
	\draw (18,0) arc (0:180:1.5 and 1) ..controls (15,1) and (18,2)..(18,3);
	\draw (17,3) arc (0:-180:0.5) arc (0:-180:0.5) ..controls (15,2) and (16,1)..(16,0) arc (180:0:0.5);
	\end{tikzpicture}
	\caption{Multiplication of two diagrams in $P_5^R(\delta)$.}\label{fig:partmult}
\end{figure}

As shown in Example \ref{ex:partdiagrams} below, there are many diagrams corresponding to the same set-partition. We will identify all such diagrams.

\begin{example}\label{ex:partdiagrams}
	Let $n=5$ and consider the set-partition $\big\{\{1,3,\bar3,\bar4\},\{2,\bar1\},\{4\},\{5,\bar2,\bar5\}\big\}$ as above. This can be represented by the diagrams in Figure \ref{fig:diagramreps}.
	\begin{figure}[H]
		\centering
		\begin{tikzpicture}[scale=0.5]
			\foreach \x in {1,...,5}
				{\fill[black] (\x,3) circle (4pt);
				\fill[black] (\x,0) circle (4pt);}
				\draw (1,3) arc (-180:0:1) -- (3,0) arc (180:0:0.5);
				\draw (2,3)..controls (2,2) and (1,1) ..(1,0);
				\draw (2,0) arc (180:0:1.5) -- (5,3);
		\end{tikzpicture}\hspace{1cm}
		\begin{tikzpicture}[scale=0.5]
			\foreach \x in {1,...,5}
				{\fill[black] (\x,3) circle (4pt);
				\fill[black] (\x,0) circle (4pt);}
				\draw (1,3) arc (-180:0:1) ..controls (3,2) and (4,1).. (4,0) arc (0:180:0.5);
				\draw (2,3)..controls (2,2) and (1,1) ..(1,0);
				\draw (2,0) arc (180:0:1.5 and 1) -- (5,3) ..controls (5,2) and (2,1).. (2,0);
		\end{tikzpicture}\hspace{1cm}
		\begin{tikzpicture}[scale=0.5]
			\foreach \x in {1,...,5}
				{\fill[black] (\x,3) circle (4pt);
				\fill[black] (\x,0) circle (4pt);}
				\draw (3,3) arc (0:-180:1) .. controls (1,2) and (3,1).. (3,0) arc (180:0:0.5);
				\draw (2,3)..controls (2,2) and (1,1) ..(1,0);
				\draw (5,0) arc (0:180:1.5 and 1) .. controls (2,1) and (5,2) .. (5,3);
		\end{tikzpicture}
		\caption{Three ways of representing the given set-partition}\label{fig:diagramreps}
	\end{figure}
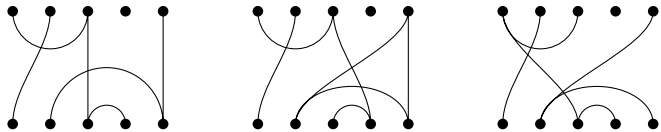
\end{example}

The following elements of $P_n^R(\delta)$ will be of interest:

\begin{figure}[H]
\centering
\begin{tikzpicture}[scale=0.5]
	\draw (-0.3,1.5) node {$s_{i,j}=$};
	\foreach \x in {1,...,7}
	{\fill[black] (\x,0) circle (4pt);
	\fill[black] (\x,3) circle (4pt);}
	\foreach \x in {1,2,4,5,7}
	{\draw (\x,3)--(\x,0);}
	\draw (3,3) .. controls (3,2) and (6,1) .. (6,0);
	\draw (6,3) .. controls (6,2) and (3,1) .. (3,0);
	\draw (3,3.5) node {$i$};\draw (6,3.5) node {$j$};
	\draw (3,-0.6) node {$\bar i$};\draw (6,-0.6) node {$\bar j$};
\end{tikzpicture}\hspace{1cm}
\begin{tikzpicture}[scale=0.5]
	\draw (-0.3,1.5) node {$p_{i,j}=$};
	\foreach \x in {1,...,7}
	{\fill[black] (\x,0) circle (4pt);
	\fill[black] (\x,3) circle (4pt);}
	\foreach \x in {1,2,4,5,7}
	{\draw (\x,3)--(\x,0);}
	\draw (6,3) arc (0:-180:1.5 and 1) -- (3,0) arc (180:0:1.5 and 1);
	\draw (3,3.5) node {$i$};\draw (6,3.5) node {$j$};
	\draw (3,-0.6) node {$\bar i$};\draw (6,-0.6) node {$\bar j$};
\end{tikzpicture}\hspace{1cm}
\begin{tikzpicture}[scale=0.5]
	\draw (-0.3,1.5) node {$p_{i}=$};
	\foreach \x in {1,...,7}
	{\fill[black] (\x,0) circle (4pt);
	\fill[black] (\x,3) circle (4pt);}
	\foreach \x in {1,2,4,5,6,7}
	{\draw (\x,3)--(\x,0);}
	\draw (3,3.5) node {$i$};
	\draw (3,-0.6) node {$\bar i$};
\end{tikzpicture}

\end{figure}

It was shown in \cite{halverson2005partition} that these elements generate $P^R_n(\delta)$.

Notice that multiplication in $P_n^R(\delta)$ cannot increase the number of propagating blocks. We therefore have a filtration of $P_n^R(\delta)$ by the number of propagating blocks. Over $\bbf$, we can construct this filtration explicitly by use of the idempotents $e_i$ defined in Figure \ref{fig:partidempotent} below.
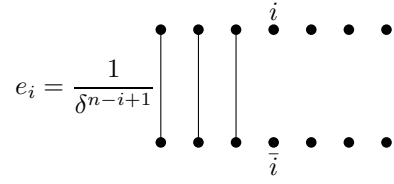
\begin{figure}[H]
	\centering
	\begin{tikzpicture}[scale=0.5]
		\draw (-1,1.5) node {$\displaystyle e_i=\frac{1}{\delta^{n-i+1}}$};
		\foreach \x in {1,...,7}
		{\fill[black] (\x,0) circle (4pt);
		\fill[black] (\x,3) circle (4pt);}
		\foreach \x in {1,2,3}
		{\draw (\x,0)--(\x,3);}
		\draw (4,3.5) node  {$i$};\draw (4,-0.6) node {$\bar i$};
	\end{tikzpicture}\caption{The idempotent $e_i$}\label{fig:partidempotent}
\end{figure}
The filtration is then given by
	\begin{equation}\label{eq:partfiltration}
		J_n^{(0)}\subset J_n^{(1)}\subset \dots J_n^{(n-1)}\subset J_n^{(n)}=P_n^\bbf(\delta)
	\end{equation}
	where $J_n^{(r)}=P_n^\bbf(\delta)e_{r+1}P_n^\bbf(\delta)$ contains only diagrams with at most $r$ propagating blocks. We also use $e_i$ to construct algebra isomorphisms
	\begin{equation}\label{eq:partiso}
		\Phi_n:P_{n-1}^\bbf(\delta)\longrightarrow e_nP_n^\bbf(\delta)e_n
	\end{equation}
	taking a diagram in $P_{n-1}^\bbf(\delta)$ and adding an extra northern and southern node to the right hand end.
Using this and following \cite{green1980polynomial} we obtain an exact localisation functor
	\begin{eqnarray}
		F_n:P_n^\bbf(\delta)\text{-\catmod}	&\longrightarrow&	P_{n-1}^\bbf(\delta)\text{-\catmod}\label{eq:partfn}\\
					 			 M 	&\longmapsto&    	e_nM\nonumber
	\end{eqnarray}
and a right exact globalisation functor
	\begin{eqnarray}
		G_n:P_n^\bbf(\delta)\text{-\catmod}	&\longrightarrow&	P_{n+1}^\bbf(\delta)\text{-\catmod}\label{eq:partgn}\\
					  M 				&\longmapsto&	P_{n+1}^\bbf(\delta)e_{n+1}\otimes_{P_n^\bbf(\delta)}M\nonumber
	\end{eqnarray}
Since $F_{n+1}G_n(M)\cong M$ for all $M\in P_n^\bbf(\delta)$-\catmod, $G_n$ is a full embedding of categories. From the filtration \eqref{eq:partfiltration} we see that
	\begin{equation}\label{eq:partlift}
		P_n^\bbf(\delta)/J_n^{(n-1)}\cong\bbf \sym_n
	\end{equation}
	and so using \eqref{eq:partiso} and following \cite{green1980polynomial}, we see that the simple $P_n^\bbf(\delta)$-modules are indexed by the set $\Lambda_{\leq n}$ if $\bbf=K$ and by the set $\Lambda_{\leq n}^\ast$ if $\bbf=k$ (see \eqref{eq:partsets} and \eqref{eq:regpartsets} for definitions of these).
	
We will also need to consider the algebra $P_{n-\frac{1}{2}}^\bbf(\delta)$, which is the subalgebra of $P_n^\bbf(\delta)$ spanned by all set-partitions with $n$ and $\bar n$ in the same block. As in \eqref{eq:partfiltration} we have a filtration of this algebra defined by the number of propagating blocks: 
	\begin{equation}\label{eq:partfiltrationhalf}
		J_{n-\frac{1}{2}}^{(1)}\subset J_{n-\frac{1}{2}}^{(2)}\subset \dots J_{n-\frac{1}{2}}^{(n-1)}\subset J_{n-\frac{1}{2}}^{(n)}=P_{n-\frac{1}{2}}^\bbf(\delta)
	\end{equation}
where $J_{n-\frac{1}{2}}^{(r)}$ contains all diagrams with at most $r$ propagating blocks. Note that since we require the nodes $n$ and $\bar n$ to be in the same block, we always have at least one propagating block. Also since $n$ and $\bar n$ must always be joined, we see that $P_{n-\frac{1}{2}}^\bbf(\delta)/J_{n-\frac{1}{2}}^{(n-1)}\cong\bbf\sym_{n-1}$, and so following the argument for $P_n^\bbf(\delta)$ above we see that the simple $P_{n-\frac{1}{2}}^\bbf(\delta)$-modules are indexed by $\Lambda_{\leq n-1}$ if $\bbf=K$ and by $\Lambda_{\leq n-1}^\ast$ if $\bbf=k$.
\\~\\
Note that we have a natural inclusion of $P_n^R(\delta)$ inside $P_{n+\half}^R(\delta)$ 	
	\begin{align*}
		P_n^R(\delta)&\longrightarrow P_{n+\frac{1}{2}}^R(\delta)\\
		d&\longmapsto d\cup \big\{\{n+1,\overline{n+1}\}\big\}
	\end{align*}
This allows us to define restriction and induction functors
\begin{align}
\mathrm{res}_n:P_n^\bbf(\delta)\text{-\catmod}&\longrightarrow P^\bbf_{n-\frac{1}{2}}(\delta)\text{-\catmod}\nonumber\\
				M&\longmapsto M|_{P_{n-\half}^\bbf(\delta)}\nonumber\\\nonumber\\
\mathrm{ind}_n:P^\bbf_n(\delta)\text{-\catmod}&\longrightarrow P^\bbf_{n+\frac{1}{2}}(\delta)\text{-\catmod}\nonumber\\
				M&\longmapsto P_{n+\half}^\bbf(\delta)\otimes_{P_n^\bbf(\delta)}M\label{eq:partresind}
\end{align}

\subsection*{Cellularity of $P_n^\bbf(\delta)$}\label{sec:partcellularity}

It was shown in \cite{xi1999partition} that the partition algebra is cellular. The cell modules $\Delta_\lambda^\bbf(n;\delta)$ are indexed by partitions $\lambda\in\Lambda_{\leq n}$, and the cellular ordering is given by the reverse of $<_d$. When $\lambda\vdash n$, we obtain $\Delta_\lambda^\bbf(n;\delta)$ by lifting the Specht module $S^\lambda_\bbf$ to the partition algebra using \eqref{eq:partlift}. When $\lambda\vdash n-t$ for some $t>0$, we obtain the cell module by
	\[\Delta_\lambda^\bbf(n;\delta)=G_{n-1}G_{n-2}\dots G_{n-t}\Delta_\lambda^\bbf(n-t;\delta)\]
	
Over $K$, each of the cell modules has a simple head $L_\lambda^K(n;\delta)$, and these form a complete set of non-isomorphic simple $P_n^K(\delta)$-modules. Over $k$, the heads $L_\lambda^k(n;\delta)$ of cell modules labelled by $p$-regular partitions $\lambda\in\Lambda_{\leq n}^\ast$ provide a complete set of non-isomorphic simple $P_n^k(\delta)$-modules.

When the context is clear, we will write $\Delta_\lambda^\bbf(n)$ and $L_\lambda^\bbf(n)$ to mean $\Delta_\lambda^\bbf(n;\delta)$ and $L_\lambda^\bbf(n;\delta)$ respectively.

We also have an explicit construction of the cell modules. Let $I(n,t)$ be the set of $(n,n)$-diagrams with precisely $t$ propagating blocks and $\overline{t+1},\overline{t+2},\dots,\overline{n}$ each in singleton blocks. Then denote by $V(n,t)$ the free $R$-module with basis $I(n,t)$. There is a $(P^R_n(\delta),\sym_t)$-bimodule action on $V(n,t)$, where elements of $P_n^R(\delta)$ act on the left by concatenation as normal and elements of $\sym_t$ act on the right by permuting the $t$ leftmost southern nodes.
Thus for a partition $\lambda\vdash t$ we can easily show that $\Delta_\lambda^R(n)\cong V(n,t)\otimes_{\sym_t}S_R^\lambda$, where $S_R^\lambda$ is the Specht module. The action of $P_n^R(\delta)$ on $\Delta_\lambda^R(n)$ is as follows: given a partition diagram $x\in P_n^R(\delta)$ and a pure tensor $v\otimes s\in\Delta_\lambda^R(n)$, we define the element
	\[x(v\otimes s)=(xv)\otimes s\]
where $(xv)$ is the product of two diagrams in the usual way if the result has $t$ propagating blocks, and is 0 otherwise.
	\begin{rem*}
		Note that we \textbf{cannot} in general provide an $R$-module $L^R_\lambda(n)$ such that $L_\lambda^K(n)=K\otimes_R L_\lambda^R(n)$ or $L_\lambda^k(n)=k\otimes_R L_\lambda^R(n)$.
	\end{rem*}

The algebra $P_{n-\frac{1}{2}}^\bbf(\delta)$ is also cellular (see \cite{martin2000partition}). We can construct the cell modules in a similar way to those of $P_n^\bbf(\delta)$. Let $I(n-\frac{1}{2},t)$ be the set of $(n,n)$-diagrams with precisely $t$ propagating blocks, one of which contains $n$ and $\bar n$, with $\overline{t+1},\overline{t+2},\dots,\overline{n-1}$ each in singleton blocks. Then denote by $V(n-\frac{1}{2},t)$ the free $R$-module with basis $I(n-\frac{1}{2},t)$. There is a $(P^R_{n-\frac{1}{2}}(\delta),\sym_{t-1})$-bimodule action on $V(n,t)$, where elements of $P_{n-\frac{1}{2}}^R(\delta)$ act on the left as normal and elements of $\sym_{t-1}$ act on the right by permuting the $t-1$ leftmost southern nodes.
Then for a partition $\lambda\vdash t-1$ we have $\Delta_\lambda^R(n-\frac{1}{2})\cong V(n-\frac{1}{2},t)\otimes_{\sym_{t-1}}S_R^\lambda$, where $S_R^\lambda$ is a Specht module. Note that when $\lambda\vdash n-1$, $\Delta_\lambda^R(n-\frac{1}{2})\cong S^\lambda_R$, the Specht module. The action of $P_{n-\frac{1}{2}}^R(\delta)$ is the same as in the previous case.

We then have
	\[\Delta_\lambda^K(n-\textstyle\half)=K\otimes_R\Delta_\lambda^R(n-\half)~~~~~ \text{and}~~~~~\Delta_\lambda^k(n-\half)=k\otimes_R\Delta_\lambda^R(n-\half)\]
and as before, $\Delta_\lambda^K(n-\half)$ has a simple head $L_\lambda^K(n-\half)$ for all $\lambda$, and $\Delta_\lambda^k(n-\half)$ has a simple head $L_\lambda^k(n-\half)$ for each $p$-regular $\lambda$.
\\~\\
The localisation and globalisation functors (\eqref{eq:partfn} and \eqref{eq:partgn}) preserve the cellular structure of the partition algebra, and in particular map cell modules to cell modules as below:
\begin{align*}
	F_n(\Delta_\lambda^\bbf(n))&\cong\begin{cases}
									\Delta_\lambda^\bbf(n-1)&\text{ if }\lambda\in\Lambda_{\leq n-1}\\
									0&\text{ otherwise}
									\end{cases}\\
	G_n(\Delta_\lambda^\bbf(n))&\cong\Delta_\lambda^\bbf(n+1)
\end{align*}		
It was shown in \cite[Proposition 7]{martin2000partition} that the restriction and induction functors \eqref{eq:partresind} also preserve the cellular structure of $P_n^\bbf(\delta)$. Furthermore, if we apply these to cell modules, then the result has a filtration by cell modules. In particular, we have the following exact sequences:
\begin{align}0\longrightarrow\Delta_\lambda^\bbf(n)\longrightarrow \mathrm{res}_{n+{\half}}\Delta_\lambda^\bbf(n+\textstyle{\half}) \longrightarrow\displaystyle\biguplus_{\mu\triangleright\lambda}\Delta_\mu^\bbf(n) \longrightarrow0\nonumber\\[10pt]
0\longrightarrow\biguplus_{\mu\triangleleft\lambda}\Delta_\mu^\bbf(n-\textstyle{\half})\longrightarrow \mathrm{res}_{n}\Delta_\lambda^\bbf(n) \longrightarrow\Delta_\lambda^\bbf(n-\half) \longrightarrow0\nonumber\\[15pt]
0\longrightarrow\Delta_\lambda^\bbf(n)\longrightarrow \mathrm{ind}_{n-{\half}}\Delta_\lambda^\bbf(n-\textstyle\half)\longrightarrow\displaystyle\biguplus_{\mu\triangleright\lambda}\Delta_\mu^\bbf(n) \longrightarrow0\nonumber\\[10pt]
0\longrightarrow\biguplus_{\mu\triangleleft\lambda}\Delta_\mu^\bbf(n+\textstyle{\half})\longrightarrow \mathrm{ind}_n\Delta_\lambda^\bbf(n)\longrightarrow\Delta_\lambda^\bbf(n+\half) \longrightarrow0\label{eq:partexact}\end{align}
The following result from Martin \cite[Section 3]{martin2000partition} allows us to focus on the partition algebras $P_n^\bbf(\delta)$ with $n\in\mathbb{Z}$.
\begin{prop}\label{prop:morita}
	Define the idempotent
		\[\xi_{n+1}=\prod_{i=1}^n(1-p_{i,n+1})\in P_{n+1}^\bbf(\delta)\]
	Then we have an algebra isomorphism
		\[\xi_{n+1}P_{n+\frac{1}{2}}^\bbf(\delta)\xi_{n+1}\cong P_n^\bbf(\delta-1)\]
	which induces a Morita equivalence between the categories $P_{n+\frac{1}{2}}^\bbf(\delta)\text{-\catmod}$ and $P_n^\bbf(\delta-1)\text{-\catmod}$. More precisely, using the above isomorphism the functors
	\begin{align*}
		\Phi:P_{n+\frac{1}{2}}^\bbf(\delta)\text{-\catmod}&\longrightarrow P_n^\bbf(\delta-1)\text{-\catmod}\\
												M&\longmapsto\xi_{n+1}M\\
		\text{and~~~~~}\Psi:P_n^\bbf(\delta-1)\text{-\catmod}&\longrightarrow \Phi:P_{n+\frac{1}{2}}^\bbf(\delta)\text{-\catmod}\\
												N&\longmapsto P_{n+\textstyle\half}^\bbf(\delta)\xi_{n+1}\otimes_{P_n^\bbf(\delta-1)}N
	\end{align*}
	define an equivalence of categories. Moreover, this equivalence preserves the cellular structure of these algebras and we have
		\[\Phi(\Delta_\lambda^\bbf(n+\textstyle\half))\cong\Delta_\lambda^\bbf(n)\]
	for all $\lambda\in\Lambda_{P_n}$.
\end{prop}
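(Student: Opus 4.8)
The plan is to treat this as a standard Morita equivalence induced by a \emph{full} idempotent, and to verify the following four points in turn: that $\xi_{n+1}$ is idempotent; that the corner algebra $\xi_{n+1}P_{n+\half}^\bbf(\delta)\xi_{n+1}$ is isomorphic as an algebra to $P_n^\bbf(\delta-1)$; that $\xi_{n+1}$ is full, i.e. $P_{n+\half}^\bbf(\delta)\,\xi_{n+1}\,P_{n+\half}^\bbf(\delta)=P_{n+\half}^\bbf(\delta)$; and that the resulting functors $\Phi,\Psi$ respect the cellular structure and satisfy $\Phi(\Delta_\lambda^\bbf(n+\half))\cong\Delta_\lambda^\bbf(n)$. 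For the first point, each $p_{i,n+1}$ is an idempotent diagram, and since the factors all share the node $n+1$ one checks that $p_{i,n+1}p_{j,n+1}$ is the diagram whose unique nontrivial block is $\{i,j,n+1\}$ together with its mirror image; hence the $p_{i,n+1}$ are pairwise commuting idempotents, so the $1-p_{i,n+1}$ are too and their product $\xi_{n+1}$ is idempotent. As each factor keeps $n+1$ and $\overline{n+1}$ in a common block, $\xi_{n+1}$ indeed lies in $P_{n+\half}^\bbf(\delta)$.

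For the algebra isomorphism I would write $\iota\colon P_n^\bbf(\delta-1)\hookrightarrow P_{n+\half}^\bbf(\delta)$ for the inclusion adjoining the vertical strand $\{n+1,\overline{n+1}\}$, and set $\theta(d)=\xi_{n+1}\,\iota(d)\,\xi_{n+1}$. Expanding $\xi_{n+1}=\sum_{S\subseteq\{1,\dots,n\}}(-1)^{|S|}p_S$ by inclusion--exclusion, where $p_S$ is the diagram with single nontrivial block $S\cup\{n+1\}$ and its mirror image (and $p_\emptyset=1$), one sees that the only diagram with an \emph{isolated} $(n+1)$-strand occurring in $\theta(d)$ is the one in which both copies of $\xi_{n+1}$ contribute their $S=\emptyset$ term, namely $\iota(d)$, and it occurs with coefficient $1$. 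Thus $\theta$ is unitriangular with respect to the isolated-strand diagrams, so the $\theta(d)$ are linearly independent, and a dimension count (or a straightening argument expressing every $\xi_{n+1}x\xi_{n+1}$ in terms of the $\theta(d)$) shows they form a basis of the corner algebra; hence $\theta$ is a linear isomorphism.

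The substance is that $\theta$ is multiplicative for the parameter $\delta-1$. Since $\xi_{n+1}^2=\xi_{n+1}$ this amounts to understanding $\iota(d_1)\,\xi_{n+1}\,\iota(d_2)$, and the cleanest route is to check $\theta$ against the generators $s_{i,j},p_{i,j},p_i$ and the defining relations of $P_n^\bbf(\delta-1)$: all relations not involving the parameter transport directly through $\theta$, and the only parameter-bearing relation is (up to symmetry) $p_i^2=(\delta-1)p_i$. Verifying $\theta(p_i)^2=(\delta-1)\theta(p_i)$ is the heart of the matter: writing $q=\iota(p_i)$, one expands $\theta(p_i)^2=\xi_{n+1}\,q\,\xi_{n+1}\,q\,\xi_{n+1}$ and reduces to the local identity $q\,\xi_{n+1}\,q=q\cdot q-q\,p_{i,n+1}\,q=\delta q-q=(\delta-1)q$, the extra $-q$ arising from the straightening $q\,p_{i,n+1}\,q=q$ through the isolated $(n+1)$-runner. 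Conceptually, the isolated strand provides an extra route that the correction term $-p_{i,n+1}$ in $\xi_{n+1}$ exploits to lower each closed-loop contribution from $\delta$ to $\delta-1$. I expect the careful bookkeeping here --- confirming that every internal block created in a product acquires exactly this $-1$ correction, and that nothing else is disturbed --- to be the main obstacle.

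Finally, for fullness and the cellular statements I would appeal to the general theory of cellular algebras and idempotent subalgebras: once $\xi_{n+1}$ is identified via $\theta$ with a cellular idempotent, $\xi_{n+1}P_{n+\half}^\bbf(\delta)\xi_{n+1}$ is cellular and $\Phi=\xi_{n+1}(-)$ sends each cell module of $P_{n+\half}^\bbf(\delta)$ either to a cell module of $P_n^\bbf(\delta-1)$ or to $0$. Using the explicit construction $\Delta_\lambda^\bbf(n+\half)\cong V(n+\half,t)\otimes_{\sym_{t-1}}S^\lambda_\bbf$ for $\lambda\vdash t-1$, the same ``isolate the $(n+1)$-strand'' analysis gives a $(\xi_{n+1}P_{n+\half}^\bbf(\delta)\xi_{n+1},\sym_{t-1})$-bimodule isomorphism $\xi_{n+1}V(n+\half,t)\cong V(n,t-1)$, whence $\Phi(\Delta_\lambda^\bbf(n+\half))\cong V(n,t-1)\otimes_{\sym_{t-1}}S^\lambda_\bbf=\Delta_\lambda^\bbf(n)$ over $P_n^\bbf(\delta-1)$. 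In particular $\Phi$ annihilates no cell module, so no simple is killed by localisation; as $\lambda$ ranges over the common label set $\Lambda_{\le n}$ of both algebras, this forces $\xi_{n+1}$ to be full and $\Phi,\Psi$ to be mutually inverse equivalences, completing the proof.
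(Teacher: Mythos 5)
The paper offers no proof of this proposition---it is quoted directly from Martin \cite[Section 3]{martin2000partition}---so your argument has to stand on its own. Its overall architecture is the right one: the $p_{i,n+1}$ are commuting idempotents, so $\xi_{n+1}$ is idempotent; the expansion $\xi_{n+1}=\sum_S(-1)^{|S|}p_S$ and the observation that only $S=T=\emptyset$ contributes a diagram with an isolated $(n+1)$-strand give linear independence of the $\theta(d)$; spanning follows because $\xi_{n+1}x\xi_{n+1}=0$ unless the block of $x$ containing $n+1$ and $\overline{n+1}$ is exactly $\{n+1,\overline{n+1}\}$; and the cell-module and fullness statements follow from $\xi_{n+1}V(n+\half,t)\cong V(n,t-1)$ together with a count of simples. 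Two steps, however, need repair.

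First, the ``local identity'' $q\,\xi_{n+1}\,q=q\cdot q-q\,p_{i,n+1}\,q=(\delta-1)q$ is false as stated: the cross terms $q\,p_{j,n+1}\,q$ with $j\neq i$ (and $q\,p_S\,q$ with $S\not\subseteq\{i\}$) do not vanish and are not multiples of $q$; they are diagrams in which the $(n+1)$-block has swallowed the $j$-strands. They disappear only under the \emph{outer} copies of $\xi_{n+1}$ in $\theta(p_i)^2=\xi_{n+1}q\xi_{n+1}q\xi_{n+1}$, because $\xi_{n+1}x=0$ whenever the block of $n+1$ in $x$ contains another northern node (and dually on the right). Second, and more seriously, verifying relations on generators only yields a homomorphism from a \emph{presented} copy of $P_n^\bbf(\delta-1)$ onto the subalgebra generated by the $\theta(g)$; it does not show that the linear bijection $\theta$ you constructed on the diagram basis is multiplicative, nor that the two maps coincide on every diagram, so the argument as organised is circular. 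The clean fix is to prove multiplicativity directly on diagrams: in $\xi_{n+1}\,\iota(d_1)\,\xi_{n+1}\,\iota(d_2)\,\xi_{n+1}$ every term $\iota(d_1)p_S\iota(d_2)$ in which $S$ meets a middle component touching the top or bottom boundary is annihilated by the outer idempotents, and the surviving sum over subsets of the $t$ closed middle components $C_1,\dots,C_t$ factorises as
\[
\sum_{S\subseteq C_1\cup\dots\cup C_t}(-1)^{|S|}\,\delta^{\#\{j\,:\,S\cap C_j=\emptyset\}}\;=\;\prod_{j=1}^{t}\Bigl(\delta+\sum_{\emptyset\neq S_j\subseteq C_j}(-1)^{|S_j|}\Bigr)\;=\;(\delta-1)^{t},
\]
giving $\theta(d_1)\theta(d_2)=(\delta-1)^{t}\,\theta(\overline{d_1d_2})=\theta(d_1\cdot_{\delta-1}d_2)$ in one stroke and subsuming the $p_i^2$ check. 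With that replacement the remainder of your argument goes through.
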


\subsection*{The blocks of the partition algebra}

The blocks of the partition algebra $P^K_n(\delta)$ in characteristic 0 were described in \cite{martin1996structure}. Assuming $\delta$ is an integer (otherwise the algebra is semisimple), the blocks are given by chains of partitions, each satisfying a combinatorial property determined by the previous partition in the chain.  We briefly recount this below, but first we introduce some notation.

\begin{defn}\label{def:char0blocks}
	Let $\mathcal{B}^K_\lambda(n;\delta)$ be the set of partitions $\mu$ labelling cell modules in the same block as $\Delta^K_\lambda(n)$. We will also say that partitions $\mu$ and $\lambda$ lie in the same block if they label cell modules in the same block. If the context is clear, we will write $\mathcal{B}^K_\lambda(n)$ to mean $\mathcal{B}_\lambda^K(n;\delta)$.
\end{defn}

\begin{defn}\label{def:deltapair}
	Let $\lambda,\mu$ be partitions, with $\mu\subset\lambda$. We say that $(\mu,\lambda)$ is a \emph{$\delta$-pair}, written $\mu\hookrightarrow_\delta\lambda$, if $\lambda$ differs from $\mu$ by a strip of nodes in a single row, the last of which has content $\delta-|\mu|$.
\end{defn}
The following is an example of this condition.
\begin{example}
We let $\delta=7$, $\lambda=(4,3,1)$ and $\mu=(4,1,1)$. Then we see that $\lambda$ and $\mu$ differ in precisely one row, and the last node in this row of $\lambda$ has content $1$ (see Figure \ref{fig:deltapair}). Since $\delta-|\mu|=7-6=1$, we see that $(\mu,\lambda)$ is a $7$-pair.

\begin{figure}[H]
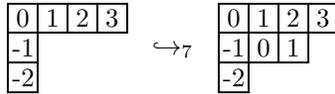

	\centering
	\newcommand{\mone}{\text{-1}}
	\newcommand{\mtwo}{\text{-2}}
	$\Yvcentermath1\young(0123,\mone,\mtwo)~~\hookrightarrow_7~~\young(0123,\mone01,\mtwo)$
	\caption{An example of a $\delta$-pair when $\delta=7$}\label{fig:deltapair}
\end{figure}
\end{example}
We then have the following characterisation of the blocks of the partition algebra in characteristic 0.
\begin{thm}[{\cite[Proposition 9]{martin1996structure}}]\label{thm:martinblocks}
Each block of the partition algebra $P^K_n(\delta)$ is given by a chain of partitions
	\[\lambda^{(0)}\subset\lambda^{(1)}\subset\dots\subset\lambda^{(r)}\]
where for each $i$, ($\lambda^{(i)}$,$\lambda^{(i+1)}$) form a $\delta$-pair, differing in the $(i+1)$-th row. Moreover there is an exact sequence of $P_n^K(\delta)$-modules
	\[0\rightarrow\Delta_{\lambda^{(r)}}^K(n)\rightarrow\Delta_{\lambda^{(r-1)}}^K(n)\rightarrow \dots\rightarrow\Delta_{\lambda^{(1)}}^K(n)\rightarrow\Delta_{\lambda^{(0)}}^K(n)\rightarrow L_{\lambda^{(0)}}^K(n)\rightarrow0\]
	with the image of each homomorphism a simple module. In particular, each of the cell modules $\Delta_{\lambda^{(i)}}^K(n)$ for $0\leq i<r$ has Loewy structure
	\begin{center}
		$L_{\lambda^{(i)}}^K(n)$\\
		$L_{\lambda^{(i+1)}}^K(n)$
	\end{center}
	and $\Delta_{\lambda^{(r)}}^K(n)=L_{\lambda^{(r)}}^K(n)$.
\end{thm}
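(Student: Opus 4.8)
The plan is to build each block out of elementary two-layer pieces, controlling everything through the cellular structure of $P_n^K(\delta)$ together with the restriction--induction sequences \eqref{eq:partexact}. Since the algebra is cellular, two partitions lie in the same block precisely when their cell modules are linked by a chain of nonzero homomorphisms (or nonsplit extensions), and each $\Delta_\mu^K(n)$ has simple head $L_\mu^K(n)$ with every other composition factor $L_\nu^K(n)$ satisfying $\mu <_d \nu$. The theorem therefore splits into three assertions: (i) $\operatorname{Hom}(\Delta_\lambda^K(n),\Delta_\mu^K(n))\neq 0$ exactly when $\mu\hookrightarrow_\delta\lambda$; (ii) each non-maximal cell module in a block is uniserial of length two, with the factors named in the statement; and (iii) the $\delta$-pair relation organises each block into a single linear chain whose successive pairs differ in successive rows.

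For (i) I would induct on $n$ using globalisation and the four sequences of \eqref{eq:partexact}. Because $G_n$ is a full embedding sending $\Delta_\lambda^K(n)$ to $\Delta_\lambda^K(n+1)$, the homomorphism space between two cell modules of fixed propagating defect is independent of $n$ once $n$ is large enough, so one reduces to the smallest $n$ carrying both $\lambda$ and $\mu$. There the connecting maps in the restriction and induction sequences compose to give a candidate homomorphism $\Delta_\lambda^K(n)\to\Delta_\mu^K(n)$. The key point is to track how passing through a half-step shifts the relevant parameter: by Proposition \ref{prop:morita} the half-integer level $P_{n+\half}^K(\delta)$ is Morita equivalent to $P_n^K(\delta-1)$, so each addition or removal of a node both changes a content and shifts the effective value of $\delta$. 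Following this bookkeeping shows the composite is nonzero precisely when the last node of the added strip has content $\delta-|\mu|$, which is Definition \ref{def:deltapair}.

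For (ii) and (iii) I would argue by dimension count and descending induction on $<_d$. The dimension of $\Delta_\mu^K(n)$ is independent of $\delta$, and for generic $\delta$ the algebra is semisimple with $\Delta_\mu^K(n)=L_\mu^K(n)$; at the special integral value the linkage of (i) forces the only possible extra factors of $\Delta_\mu^K(n)$ to be the $L_\nu^K(n)$ with $\mu\hookrightarrow_\delta\nu$ lying in the same chain. A content computation shows each $\mu$ admits at most one such $\nu$ with $\nu\supset\mu$, and that the differing row strictly increases along the chain; this produces the linear chain and, together with the head being simple, forces $\Delta_{\lambda^{(i)}}^K(n)$ to have exactly the two factors $L_{\lambda^{(i)}}^K(n)$ and $L_{\lambda^{(i+1)}}^K(n)$ for $i<r$, and $\Delta_{\lambda^{(r)}}^K(n)=L_{\lambda^{(r)}}^K(n)$. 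The nonzero maps of (i) then assemble into the asserted complex, its exactness following from the length-two structure, with the image of $\Delta_{\lambda^{(i+1)}}^K(n)\to\Delta_{\lambda^{(i)}}^K(n)$ equal to the shared factor $L_{\lambda^{(i+1)}}^K(n)$.

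The main obstacle I anticipate is the non-vanishing half of (i). Writing down a candidate map from the connecting homomorphisms is routine, but showing it is genuinely nonzero amounts to computing the rank of the cellular bilinear form at the specialisation forced by the content $\delta-|\mu|$, so that the $\delta$-pair condition is sufficient and not merely necessary. The second delicate point is ruling out longer composition series inside a block; I would settle this through the dimension count above, using that distinct chains yield distinct blocks so that no further composition factors are available to $\Delta_{\lambda^{(i)}}^K(n)$.
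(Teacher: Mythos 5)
The paper does not prove this theorem: it is imported verbatim from Martin \cite{martin1996structure} (with the geometric reformulation recalled from \cite{bowmanblocks}), so your attempt can only be judged on its own merits. Your architecture --- (i) characterise nonzero maps between cell modules by the $\delta$-pair condition, (ii) show each non-maximal cell module in a block has exactly two composition factors, (iii) assemble the chain --- is the right overall shape, but as written it has two genuine gaps.

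First, the non-vanishing half of (i) is the hard content of the theorem, and you only name it as an obstacle. The sequences \eqref{eq:partexact} together with the Morita shift of Proposition \ref{prop:morita} do yield the \emph{necessity} of the $\delta$-pair condition (a content/central-character argument kills all other Hom spaces), but they do not by themselves produce a nonzero map $\Delta_\lambda^K(n)\to\Delta_\mu^K(n)$ when $\mu\hookrightarrow_\delta\lambda$: the connecting maps of those short exact sequences can perfectly well compose to zero. One needs either an explicit construction of the homomorphism (as in \cite{martin1996structure} and \cite{doran2000partition}) or a genuine computation of the rank of the cellular form at the special value of $\delta$; without one of these, (i) is an assertion rather than a proof.

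Second, the deduction of the length-two structure is circular as stated. You invoke ``distinct chains yield distinct blocks so that no further composition factors are available,'' but the blocks are \emph{defined} by the linkage of composition factors, so the block classification cannot be used to bound the composition factors of $\Delta_{\lambda^{(i)}}^K(n)$. Moreover, knowing all Hom spaces between cell modules does not determine the decomposition numbers ($L_\nu^K(n)$ can occur in the radical of $\Delta_\mu^K(n)$ with $\mathrm{Hom}(\Delta_\nu^K(n),\Delta_\mu^K(n))=0$), and your dimension count does not close because $\dim L_{\lambda^{(i)}}^K(n)$ is not known independently of the composition series you are trying to establish. The standard repair is a descending induction along the chain starting from $\lambda^{(r)}$, whose cell module is simple, combined with a multiplicity-one statement of the kind furnished by the $\sym_n$-module structure of $\Psi(n,t)$ (cf.\ Propositions \ref{prop:dw4.5} and \ref{prop:dw4.6}); some such argument must be supplied to make (ii) rigorous.
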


This was reformulated in $\cite{bowmanblocks}$ as a geometric characterisation in the following way. 

Let $\{\varepsilon_0,\dots,\varepsilon_n\}$ be a set of formal symbols and set
	\[E_n=\bigoplus_{i=0}^n\mathbb{R}\varepsilon_i\]
We have an inner product $\langle~,~\rangle$ on $E_n$ given by extending linearly the relations
	\[\langle\varepsilon_i,\varepsilon_j\rangle=\delta_{ij}\]
where $\delta_{ij}$ is the Kronecker delta.\\
Let $\Phi_n=\{\varepsilon_i-\varepsilon_j:0\leq i,j\leq n\}$ be a root system of type $A_n$, and $W_n\cong\sym_{n+1}$ the corresponding Weyl group, generated by the reflections $s_{i,j}=s_{\varepsilon_i-\varepsilon_j}$ ($0\leq i<j\leq n$). There is an action of $W_n$ on $E_n$, the generators acting by
	\[s_{i,j}(x)=x-\langle x,\varepsilon_i-\varepsilon_j\rangle(\varepsilon_i-\varepsilon_j)\]
	for all $x\in E_n$.
	
	If we fix the element $\rho=\rho(\delta)=(\delta,-1,-2,\dots,-n)$ we may then define a shifted action of $W_n$ on $E_n$, given by
	\[w\cdot_\delta x=w(x+\rho(\delta))-\rho(\delta)\]
for all $w\in W_n$ and $x\in E_n$.

	Given a partition $\lambda=(\lambda_1,\lambda_2,\dots,\lambda_l)$, let 
\[\hat\lambda=(-|\lambda|,\lambda_1,\dots,\lambda_n)=-|\lambda|\varepsilon_0+\sum_{i=1}^n\lambda_i\varepsilon_i\in E_n\]
where any $\lambda_i$ not appearing in $\lambda$ is taken to be zero. Using this embedding of $\Lambda_{\leq n}$ into $E_n$ we can consider the action of $W_n$ on the set of partitions $\Lambda_{\leq n}$ defined by
\[w\cdot_\delta\hat\lambda=w(\hat\lambda+\rho(\delta))-\rho(\delta)\]
where $w\in W_n$ and $\rho(\delta)=(\delta,-1,-2,\dots,-n)$ as before. We introduce the following notation for the orbits of this action

\begin{defn}\label{def:char0orbits}
	Let $\mathcal{O}_\lambda(n;\delta)$ be the set of partitions $\mu$ such that $\hat\mu\in W_n\cdot_\delta\hat\lambda$. If the context is clear, we will write $\mathcal{O}_\lambda(n)$ to mean $\mathcal{O}_\lambda(n;\delta)$.
\end{defn}

  We then have the following reformulation of \cite{martin1996structure}.
\begin{thm}[{\cite{martin1996structure}, \cite[Theorem 4.5]{bowmanblocks}}]\label{thm:part0blocks}
	For all $\lambda\in\Lambda_{\leq n}$, we have $\mathcal{B}_\lambda^K(n;\delta)=\mathcal{O}_\lambda(n;\delta)$.
\end{thm}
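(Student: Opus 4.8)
The plan is to translate the combinatorial notion of a $\delta$-pair (Definition~\ref{def:deltapair}) into the language of the shifted $W_n$-action, and then read off both inclusions $\mathcal{B}_\lambda^K(n;\delta)\subseteq\mathcal{O}_\lambda(n;\delta)$ and $\mathcal{O}_\lambda(n;\delta)\subseteq\mathcal{B}_\lambda^K(n;\delta)$ from Martin's description of the blocks (Theorem~\ref{thm:martinblocks}). The starting point is to record the coordinates of the vector $\hat\lambda+\rho(\delta)$: its $0$-th entry is $\delta-|\lambda|$ and its $i$-th entry is $\lambda_i-i$ for $1\le i\le n$. Since the reflection $s_{i,j}$ acts on $E_n$ by interchanging the $i$-th and $j$-th coordinates, $W_n\cong\sym_{n+1}$ acts on $\hat\lambda+\rho(\delta)$ by permuting its $n+1$ entries. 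Consequently $\hat\mu\in W_n\cdot_\delta\hat\lambda$ if and only if the multisets $\{\delta-|\lambda|,\lambda_1-1,\dots,\lambda_n-n\}$ and $\{\delta-|\mu|,\mu_1-1,\dots,\mu_n-n\}$ coincide; I will call these $n+1$ integers the beta-numbers of $\lambda$.

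First I would prove $\mathcal{B}_\lambda^K(n;\delta)\subseteq\mathcal{O}_\lambda(n;\delta)$ by showing that a single $\delta$-pair is a single reflection. Suppose $(\mu,\lambda)$ is a $\delta$-pair with $\lambda$ and $\mu$ differing in the $r$-th row. The defining content condition $\lambda_r-r=\delta-|\mu|$ says precisely that the $r$-th entry of $\hat\lambda+\rho(\delta)$ equals the $0$-th entry of $\hat\mu+\rho(\delta)$; and since $|\lambda|=|\mu|+(\lambda_r-\mu_r)$, a short computation gives $\delta-|\lambda|=\mu_r-r$, i.e.\ the $0$-th entry of $\hat\lambda+\rho(\delta)$ equals the $r$-th entry of $\hat\mu+\rho(\delta)$, while all other entries agree. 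Hence $\hat\lambda+\rho(\delta)=s_{0,r}(\hat\mu+\rho(\delta))$, so $\hat\lambda=s_{0,r}\cdot_\delta\hat\mu$ and $\mu,\lambda$ lie in the same orbit. By Theorem~\ref{thm:martinblocks} every block is a chain of $\delta$-pairs, so applying this step along the chain yields $\mathcal{B}_\lambda^K(n;\delta)\subseteq\mathcal{O}_\lambda(n;\delta)$.

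The harder direction is $\mathcal{O}_\lambda(n;\delta)\subseteq\mathcal{B}_\lambda^K(n;\delta)$, and here the obstacle is that the $W_n$-orbit in $E_n$ contains many points not of the form $\hat\mu$, so one cannot simply lift an arbitrary reflection path through genuine partitions. My plan is instead to describe the orbit explicitly. Given the beta-multiset, a rearrangement yields a partition exactly when, in positions $1,\dots,n$, its entries are strictly decreasing and remain non-negative after adding $i$ to the $i$-th entry; the sum of the beta-numbers is the invariant $\delta-\binom{n+1}{2}$, which forces the partition obtained by placing the beta-number $a$ in position $0$ to have size $\delta-a$. Thus the partitions in the orbit are parametrised by the choice of position-$0$ beta-number $a$ subject to $\delta-n\le a\le\delta$, and ordering the admissible choices as $a_0>a_1>\dots>a_r$ produces partitions $\mu^{(0)}\subset\mu^{(1)}\subset\dots\subset\mu^{(r)}$. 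A direct check (the computation above, run backwards) shows that consecutive $\mu^{(j)},\mu^{(j+1)}$ form a $\delta$-pair, so the orbit is a single chain of $\delta$-pairs and therefore lies in one block, giving the reverse inclusion.

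The one genuinely delicate point, which I would isolate as a lemma, is that this chain has the precise shape demanded by Theorem~\ref{thm:martinblocks}: writing the beta-numbers in decreasing order as $b_0>b_1>\dots>b_n\ge -n$ one has $b_k\ge -k$, whence $\sum_{k=0}^n(b_k+k)=\delta$ with every summand non-negative, and in particular the largest beta-number satisfies $b_0\le\delta$. This ``no large beta-number'' bound guarantees that the successive $\delta$-pairs differ in rows $1,2,\dots,r$ exactly as in Theorem~\ref{thm:martinblocks}, and also that when two beta-numbers coincide (or the entry $\delta-|\lambda|$ drops below $-n$) the orbit collapses to the singleton $\{\lambda\}$, matching a block of defect zero. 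Establishing this bound and disposing of these degenerate cases is where essentially all the work lies; the two inclusions then combine to give $\mathcal{B}_\lambda^K(n;\delta)=\mathcal{O}_\lambda(n;\delta)$.
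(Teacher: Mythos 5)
The paper offers no proof of Theorem~\ref{thm:part0blocks}: it is stated as a quotation of Martin's result and of \cite[Theorem 4.5]{bowmanblocks}, so there is no internal argument to compare yours against. That said, your strategy is the standard one: identifying the shifted orbit with the multiset $\{\delta-|\lambda|,\lambda_1-1,\dots,\lambda_n-n\}$ is correct, and your first inclusion $\mathcal{B}_\lambda^K(n;\delta)\subseteq\mathcal{O}_\lambda(n;\delta)$ (a $\delta$-pair in row $r$ is exactly the transposition $s_{0,r}$ applied to $\hat\mu+\rho(\delta)$) is complete and right.

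The reverse inclusion as written has two genuine gaps. First, the claim that consecutive admissible choices $a_j>a_{j+1}$ give a $\delta$-pair requires that $a_j$ and $a_{j+1}$ be \emph{adjacent} in the sorted list $b_0>b_1>\dots>b_n$: only then is $\hat\mu^{(j+1)}+\rho(\delta)$ obtained from $\hat\mu^{(j)}+\rho(\delta)$ by a single transposition $s_{0,t+1}$, so that the partitions differ in one row. If an inadmissible value sat strictly between two admissible ones, the corresponding partitions would differ in two or more rows and would not form a $\delta$-pair. This is saved by the fact that admissibility, $\delta-n\le a\le\delta$, is an interval condition on a decreasing list, but you never say so. Second, the inequality $b_k\ge -k$ that you isolate as the key lemma is false in general: for $\lambda=(2)$, $\delta=0$, $n=2$ the beta-numbers are $1,-2,-2$, so $b_1=-2<-1$. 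It only holds \emph{after} one has excluded the degenerate configurations (a repeated beta-number, $\delta-|\lambda|<-n$, or $\delta\notin\mathbb{Z}$), each of which forces the orbit to be the singleton $\{\lambda\}$; so it cannot serve, as you propose, as the tool for detecting those cases --- the logic runs the other way. The clean statement is: if $\mathcal{O}_\lambda(n;\delta)$ is not a singleton then $\delta\in\mathbb{Z}$ and $\delta\ge|\lambda|-l\ge\lambda_1-1$ (where $l$ is the number of non-zero parts), whence $b_0=\max(\delta-|\lambda|,\lambda_1-1)\le\delta$ and the chain of $\delta$-pairs indeed starts in row $1$ as Theorem~\ref{thm:martinblocks} demands. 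Finally, deducing ``same block'' from ``maximal chain of $\delta$-pairs of Martin's shape'' uses the converse of the quoted Theorem~\ref{thm:martinblocks} (that every such chain lies in a single block, not merely that every block is such a chain); that converse is part of Martin's actual result but should be invoked explicitly rather than read off the statement as given.
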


This can be extended to give a characterisation of the blocks of the the partition algebra $P_n^k(\delta)$ in characteristic $p$. We let $W_n^p$ be the affine Weyl group corresponding to $\Phi_n$, generated by reflections $s_{i,j,rp}=s_{\varepsilon_i-\varepsilon_j,rp}$ $(0\leq i<j\leq n,r\in\mathbb{Z})$, with an action on $E_n$ given by
\[s_{i,j,rp}(x)=x-\big(\langle x,\varepsilon_i-\varepsilon_j\rangle-rp\big)(\varepsilon_i-\varepsilon_j)\]
We use the following notation, analogues of Definitions \ref{def:char0blocks} and \ref{def:char0orbits}.

\begin{defn}\label{def:charporbits}
	Let $\mathcal{B}^k_\lambda(n;\delta)$ be the set of partitions $\mu$ labelling cell modules in the same block as $\Delta^k_\lambda(n;\delta)$. We will also say that partitions $\mu$ and $\lambda$ lie in the same block if they label cell modules in the same block. Moreover, let $\mathcal{O}^p_\lambda(n;\delta)$ be the set of partitions $\mu$ such that $\hat\mu\in W^p_n\cdot_\delta\hat\lambda$. If the context is clear, we will write $\mathcal{B}_\lambda^k(n)$ and $\mathcal{O}^p_\lambda(n)$ to mean $\mathcal{B}_\lambda^k(n;\delta)$ and $\mathcal{O}^p_\lambda(n;\delta)$ respectively.
\end{defn}

\begin{thm}[{\cite[Theorem 5.19]{bowmanblocks}}]~\label{thm:pblocks}
For all $\lambda\in\Lambda_{\leq n}$, we have $\mathcal{B}_\lambda^k(n;\delta)=\mathcal{O}^p_\lambda(n;\delta)$\end{thm}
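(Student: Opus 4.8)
The plan is to prove the two inclusions $\mathcal{O}^p_\lambda(n;\delta)\subseteq\mathcal{B}^k_\lambda(n;\delta)$ and $\mathcal{B}^k_\lambda(n;\delta)\subseteq\mathcal{O}^p_\lambda(n;\delta)$ separately, reducing the affine linkage to two mechanisms that are already available: the characteristic-zero blocks of Theorem \ref{thm:part0blocks} and the $p$-core description of the blocks of $k\sym_n$ in Theorem \ref{thm:nakayama}. The organising observation is the decomposition $W_n^p\cong W_n\ltimes pQ$, where $Q=\mathbb{Z}\Phi_n$ is the root lattice: the finite part $W_n$ recovers the characteristic-zero orbits, while the translation part by $pQ$ is precisely what the modular theory must account for. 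I would therefore first establish the combinatorial dictionary relating the $\delta$-shifted affine action to the abacus. Since $\hat\lambda+\rho(\delta)$ has coordinates $\delta-|\lambda|,\lambda_1-1,\dots,\lambda_n-n$, and each generator $s_{i,j,rp}$ preserves both the coordinate sum and the multiset of coordinate residues modulo $p$, matching the entries $\lambda_i-i$ against the $\beta$-numbers $\beta(\lambda,b)_i-b$ shows that, for partitions of equal size, this residue multiset records exactly the runner-occupation data $\Gamma(\lambda,b)$, i.e.\ the $p$-core.

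For the inclusion $\mathcal{O}^p_\lambda\subseteq\mathcal{B}^k_\lambda$ (orbits do not split) I would show that each generator of $W_n^p$ connects partitions lying in a common block, treating the two types of generator separately. For a finite reflection the relevant partitions form a $\delta$-pair and lie in the same characteristic-zero block by Theorem \ref{thm:part0blocks}; the exact sequence of Theorem \ref{thm:martinblocks} then supplies a nonzero homomorphism between the corresponding $K$-cell modules, and Lemma \ref{lem:modred} converts this into a nonzero homomorphism into a quotient of the $p$-modular reduction, forcing $\Delta_\lambda^k(n)$ and $\Delta_\mu^k(n)$ to share a composition factor and hence lie in the same $k$-block. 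For a translation generator, the dictionary above together with Nakayama's Conjecture (Theorem \ref{thm:nakayama}) shows that the two partitions, once brought to full size, lie in the same block of $k\sym_n\cong P_n^k(\delta)/J_n^{(n-1)}$ by \eqref{eq:partlift}, hence in the same block of $P_n^k(\delta)$; the passage to full size is achieved by the globalisation functor $G_n$, which is a full embedding sending cell modules to cell modules and so preserves block membership. Throughout, the induction and restriction exact sequences \eqref{eq:partexact} realise the local moves $\mu\triangleright\lambda$ and $\mu\triangleleft\lambda$ as common composition factors, and the Morita equivalence of Proposition \ref{prop:morita} lets me pass between $P_{n+\frac{1}{2}}^k(\delta)$ and $P_n^k(\delta-1)$, so that an induction on $n$ with $\delta$ shifting accordingly is available.

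For the reverse inclusion $\mathcal{B}^k_\lambda\subseteq\mathcal{O}^p_\lambda$ (the linkage principle) it suffices to show that whenever $\Delta_\lambda^k(n)$ and $\Delta_\mu^k(n)$ share a composition factor $L_\nu^k(n)$, the points $\hat\lambda$ and $\hat\mu$ lie in a single $W_n^p$-orbit. I would control the composition factors of a $k$-cell module through the factorisation $\Delta_\lambda^R(n)\cong V(n,t)\otimes_{\sym_t}S_R^\lambda$: its $k$-composition factors are constrained on the one hand by the characteristic-zero composition series of $\Delta_\lambda^K(n)$, which by Theorem \ref{thm:part0blocks} involves only partitions in $W_n\cdot_\delta\hat\lambda$, and on the other by the modular decomposition numbers of $\sym_t$, which by Theorem \ref{thm:nakayama} link only partitions of the same $p$-core. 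Since both linkages are instances of $W_n^p$-moves by the dictionary of the first paragraph, no composition factor can escape the affine orbit, yielding the inclusion.

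I expect the main obstacle to be this last direction: establishing a sufficiently sharp linkage principle requires simultaneously bounding the $k$-composition factors of $\Delta_\lambda^k(n)$ by the two independent mechanisms and proving that their combination produces nothing beyond the affine orbit --- equivalently, that the residue-multiset invariant is a \emph{complete} invariant for block membership rather than merely a necessary one. Making the interaction between the $\delta$-shifted geometry and the abacus combinatorics precise \emph{uniformly across all sizes} $|\lambda|\le n$, in particular reconciling the size-changing affine moves with the size-changing $\delta$-pairs, is the delicate point; the representation-theoretic inputs (Lemma \ref{lem:modred}, the functor $G_n$, the sequences \eqref{eq:partexact}, and Proposition \ref{prop:morita}) then serve to turn each verified combinatorial adjacency into an actual common composition factor.
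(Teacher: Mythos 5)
This theorem is not proved in the paper at all: it is imported wholesale from \cite[Theorem 5.19]{bowmanblocks}, and the text only recalls the abacus dictionary used there. So the comparison can only be between your proposal and the cited proof, and on that score your proposal has genuine gaps in both inclusions, even though the overall frame (two inclusions, the residue-multiset/abacus dictionary, Lemma \ref{lem:modred} and Theorem \ref{thm:partsymblocks} as the two linking mechanisms) is the right one.

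For $\mathcal{O}^p_\lambda(n;\delta)\subseteq\mathcal{B}^k_\lambda(n;\delta)$, ``check each generator'' does not work as stated. First, a generator of $W_n^p$ applied to $\hat\lambda$ need not land on $\hat\mu$ for any $\mu\in\Lambda_{\leq n}$, so connectivity of the orbit does not reduce to connectivity along generators within the set of partitions. Second, and more seriously, the size-changing translations (those involving $\varepsilon_0$, which replace $|\lambda|$ by $|\lambda|\pm p$ and some $\lambda_j$ by $\lambda_j\mp p$) are covered by neither of your two mechanisms: Nakayama's Conjecture only links partitions of equal size, and such a pair $(\mu,\lambda)$ is in general not a $(\delta+rp)$-pair for any $r$, since a $\delta'$-pair forces $\lambda_j-j=\delta'-|\mu|$, a condition the orbit relation does not impose. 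One must instead build a chain of elementary links through intermediate partitions that stay inside $\Lambda_{\leq n}$; constructing these chains on the marked abacus is the combinatorial core of the proof in \cite{bowmanblocks} and is absent from your sketch.

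For $\mathcal{B}^k_\lambda(n;\delta)\subseteq\mathcal{O}^p_\lambda(n;\delta)$, the claim that the $k$-composition factors of $\Delta_\mu^k(n)$ are ``constrained'' by the characteristic-zero series together with the decomposition numbers of $\sym_t$ is unsupported. Writing $[\Delta_\mu^k(n):L_\lambda^k(n)]=\sum_\nu[\Delta_\mu^K(n):L_\nu^K(n)]\,d_{\nu\lambda}$, the unknown quantity is $d_{\nu\lambda}$, the multiplicity of $L_\lambda^k(n)$ in the reduction of a lattice in $L_\nu^K(n)$; the factorisation $\Delta_\lambda^R(n)\cong V(n,t)\otimes_{\sym_t}S_R^\lambda$ controls cell modules, not simple heads, and the paper's own Remark warns that $L_\lambda^K(n)$ admits no canonical $R$-form. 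Indeed the final Example of the paper exhibits exactly the failure of the ``combine the two mechanisms'' heuristic at the level of multiplicities. What is actually needed here is a proof that the residue multiset of $\hat\mu+\rho(\delta)$ modulo $p$ is a block invariant, which in \cite{bowmanblocks} is obtained from the action of central (Jucys--Murphy-type) elements on cell modules; nothing in your outline supplies this step.
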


The proof of Theorem \ref{thm:pblocks} given in \cite{bowmanblocks} introduces a varation of the abacus as defined in Section \ref{sec:symmetricgroup}. We will briefly outline this below.
\\~\\
For any two partitions $\lambda,\mu\in\Lambda_{\leq n}$ it is possible to show that
	\begin{equation}\label{eqn:orbitssequence}
		\mu\in\mathcal{O}^p_\lambda(n;\delta)\iff\hat\mu+\rho(\delta)\sim_p\hat\lambda+\rho(\delta)
	\end{equation}
where $\sim_p$ means that the two sequences modulo $p$ are the same up to reordering.

We represent this equivalence in the form of an abacus in the following way. For a partition $\lambda$, choose $b\in\mathbb{N}$ satisfying $b\geq|\lambda|$. We write $\hat\lambda$ as a $(b+1)$-tuple by adding zeros to obtain a vector in $E_b$, and extend $\rho(\delta)$ to the $(b+1)$-tuple
	\[\rho(\delta)=(\delta,-1,-2,\dots,-b)\in E_b\]
Now define the $\beta_\delta$-sequence of $\lambda$ to be
	\begin{align*}
		\beta_\delta(\lambda,b)&=\hat\lambda+\rho(\delta)+b(\underbrace{1,1,\dots,1}_{n+1})\\
						&=(\delta-|\lambda|+b,\lambda_1-1+b,\lambda_2-2+b,\dots,\lambda_l-l+b,-(l+1)+b,\dots,2,1,0)
	\end{align*}
We then see that \eqref{eqn:orbitssequence} is also equivalent to $\beta_\delta(\mu,b)\sim_p\beta_\delta(\lambda,b)$. The $\beta_\delta$-sequence is used to construct the \emph{$\delta$-marked abacus} of $\lambda$ as follows:
	\begin{enumerate}
		\item Take an abacus with $p$ runners, labelled $0$ to $p-1$ from left to right. The positions of the abacus start at 0 and increase from left to right, moving down the runners.
		\item Let $\beta_\delta(\lambda,b)_0=\delta-|\lambda|+b\equiv v_\lambda$ (mod $p$), where $0\leq v_\lambda\leq p-1$. Place a $\vee$ on top of runner $v_\lambda$.
		\item For the rest of the entries of $\beta_\delta(\lambda,b)$, place a bead in the corresponding position of the abacus, so that the final abacus contains $b$ beads. 
	\end{enumerate}
Example \ref{ex:abacus} below demonstrates this construction.
\begin{example}\label{ex:abacus}
Let $p=5$, $\delta=6$, $\lambda=(2,1)$. We choose an integer $b\geq3$, for instance $b=7$. Then the $\beta$-sequence is
	\begin{align*}
		\beta_\delta(\lambda,7)&=(6-3+7,2-1+7,\dots,0)\\
						&=(10,8,6,4,3,2,1,0)		
	\end{align*}
	The resulting abacus is given in Figure \ref{fig:abacus}.
	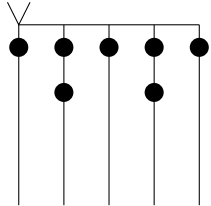
\begin{figure}[H]
		\centering
		\begin{tikzpicture}[scale=0.6]
			\draw (1,4)--(5,4);
			\foreach \x in {1,...,5}
			{\draw (\x,0)--(\x,4);
			\fill[black] (\x,3.5) circle (6pt);}
			\fill[black] (2,2.5) circle (6pt);
			\fill[black] (4,2.5) circle (6pt);
			\draw (0.75,4.5)--(1,4)--(1.25,4.5);
		\end{tikzpicture}
		\caption{The $\delta$-marked abacus of $\lambda$, where $\lambda=(2,1)$, $p=5$, $\delta=1$ and $b=7$.}\label{fig:abacus}
	\end{figure}
\end{example} 
	Note that if we ignore the $\vee$ we recover James' abacus representing $\lambda$ with $b$ beads.
	
	If the context is clear, we will use \emph{marked abacus} to mean $\delta$-marked abacus.
	
Recall the definition of $\Gamma(\lambda,b)$ from Section \ref{sec:symmetricgroup}. If we now use the marked abacus, we similarly define $\Gamma_\delta(\lambda,b)=(\Gamma_\delta(\lambda,b)_0,\Gamma_\delta(\lambda,b)_1,\dots,\Gamma_\delta(\lambda,b)_{p-1})$ by
	\[\Gamma_\delta(\lambda,b)_i=\begin{cases}
								\Gamma(\lambda,b)&\text{if }i\neq v_\lambda\\
								\Gamma(\lambda,b)+1&\text{if }i=v_\lambda
							\end{cases}\]
Given any other partition $\mu$, we construct its marked abacus and see that a further equivalent form of \eqref{eqn:orbitssequence} is $\Gamma_\delta(\mu,b)=\Gamma_\delta(\lambda,b)$. Combining this with Theorem \ref{thm:pblocks} gives a characterisation of the blocks of $P_n^k(\delta)$ in terms of the marked abacus.
	
\section{The decomposition matrix of $P_n^k(\delta)$}

In this section we present some results that allow us to use information about $P_r^K(\delta+tp)$ $(t\in\mathbb{Z})$ to understand the structure of $P_r^k(\delta)$. We use the notation $\mathbf{D}(A)$ to denote the decomposition matrix of the algebra $A$.

We first recall the following theorem from \cite{hartmann2010cohomological} which allows us to use the modular representation theory of the symmetric group in examining the partition algebra.

\begin{thm}[{\cite[Corollary 6.2]{hartmann2010cohomological}}]
		Let $\lambda,\mu\vdash n-t$ be partitions, with $\lambda\in\Lambda_{\leq n}^\ast$. Then 
			\[[\Delta_\mu^k(n;\delta):L_\lambda^k(n;\delta)]=[S^\mu_k:D^\lambda_k]\]
		In particular, given two partitions $\lambda,\mu\vdash n-t$, if the two Specht modules $S^\lambda_k$ and $S^\mu_k$ are in the same block for the symmetric group algebra $k\sym_{n-t}$, then $\mu\in\mathcal{B}_\lambda^k(n;\delta)$.\label{thm:partsymblocks}
\end{thm}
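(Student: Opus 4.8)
The plan is to reduce the partition-algebra decomposition number down to the top level $P_{n-t}^k(\delta)$ by iterating the localisation functor, and then to identify what survives with a symmetric-group decomposition number through the isomorphism $P_{n-t}^k(\delta)/J_{n-t}^{(n-t-1)}\cong k\sym_{n-t}$. The case $t=0$ is immediate from this isomorphism, since then $\Delta_\mu^k(n)$ is the lift of $S^\mu_k$ and $L_\lambda^k(n)$ the lift of $D^\lambda_k$, so I assume $t\geq1$.

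First I would record how the localisation functor acts on simple modules. Since $F_n$ is exact, satisfies $F_nG_{n-1}\cong\mathrm{id}$, and carries cell modules to cell modules, the standard theory of such functors yields $F_n(L_\nu^k(n))\cong L_\nu^k(n-1)$ for every $\nu\in\Lambda_{\leq n-1}^\ast$ and $F_n(L_\nu^k(n))=0$ for $\nu\vdash n$; in particular $F_n$ sends distinct surviving simples to distinct simples. As $|\lambda|=|\mu|=n-t\leq n-1$, both the simple $L_\lambda^k$ and the cell module $\Delta_\mu^k$ survive every application of $F_{n-t+1}\circ\cdots\circ F_n$, and exactness together with this injectivity on simples gives
\[
[\Delta_\mu^k(n):L_\lambda^k(n)]=[\Delta_\mu^k(n-t):L_\lambda^k(n-t)].
\]
No composition factor of $\Delta_\mu^k(n)$ other than $L_\lambda^k(n)$ can map onto $L_\lambda^k(n-1)$, so the multiplicity is genuinely preserved at each step.

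At the top level both $\lambda$ and $\mu$ have size $n-t$, so $\Delta_\mu^k(n-t)$ is the lift of the Specht module $S^\mu_k$ and is annihilated by $J_{n-t}^{(n-t-1)}$; hence it is a module for the quotient $k\sym_{n-t}$. Every composition factor is therefore some $L_\nu^k(n-t)$ with $\nu\vdash n-t$, and under the inflation these correspond precisely to the simple $k\sym_{n-t}$-modules $D^\nu_k$. Since inflation along a quotient map is exact and preserves composition multiplicities, I obtain
\[
[\Delta_\mu^k(n-t):L_\lambda^k(n-t)]=[S^\mu_k:D^\lambda_k],
\]
which combined with the previous display proves the stated equality.

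For the final assertion, two Specht modules lie in the same block of $k\sym_{n-t}$ exactly when their labels are joined by a path in the bipartite graph on Specht and simple modules whose edges record nonzero decomposition numbers, this graph being connected on each block. The displayed equality converts each nonzero edge $[S^\sigma_k:D^\tau_k]$ into a nonzero $[\Delta_\sigma^k(n):L_\tau^k(n)]$, placing $\sigma$ and $\tau$ in a common block of $P_n^k(\delta)$; chaining along the path then forces $\lambda$ and $\mu$ into the same block, so $\mu\in\mathcal{B}_\lambda^k(n;\delta)$. The main obstacle is the first step: one must justify that localisation preserves the exact multiplicity, which rests on the behaviour of $F_n$ on simple modules stated above. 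This is standard for localisation functors of this type but is not spelled out among the cell-module statements quoted in the setup, so it is the point I would verify with the most care.
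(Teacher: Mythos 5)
The paper does not prove this statement at all: it is quoted verbatim from the cited reference (Corollary~6.2 of the Hartmann et al.\ paper on cohomological stratification), so there is no in-paper argument to compare against. Your proof is correct and self-contained, and it is essentially the standard argument that underlies the cited result for cellularly stratified diagram algebras: iterate the exact localisation functor $F_n(M)=e_nM$ down to $P_{n-t}^k(\delta)$, using that $F_n$ kills exactly the simples labelled by $\nu\vdash n$ and sends the surviving simples injectively to simples (so exactness preserves the multiplicity $[\Delta_\mu^k(n):L_\lambda^k(n)]$ at each step), then observe that $\Delta_\mu^k(n-t)$ is the inflation of $S^\mu_k$ along $P_{n-t}^k(\delta)/J_{n-t}^{(n-t-1)}\cong k\sym_{n-t}$, so its composition factors are the inflations of those of $S^\mu_k$. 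The one point you rightly flag --- that $F_n(L_\nu^k(n))\cong L_\nu^k(n-1)$ for $\nu\in\Lambda_{\leq n-1}^\ast$ --- follows from Green's idempotent theory: $F_n(L_\nu^k(n))$ is a quotient of $\Delta_\nu^k(n-1)$ by exactness, is simple or zero, and is nonzero because the $|\Lambda_n^\ast|$ simples labelled by $\nu\vdash n$ already account for everything $F_n$ kills; its only simple quotient is then $L_\nu^k(n-1)$ by cellularity. The block-linkage argument at the end is also correct, since blocks of a cellular algebra are precisely the equivalence classes generated by nonzero decomposition numbers. No gaps.
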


We also recall some results from \cite{doran2000partition} which can be generalised to fields of arbitrary characteristic. We begin by defining the $k$-vector space $\Psi(n,t)=\{u\in k\otimes_RV(n,t):p_{i,j}u=0\text{ for all }i\neq j\}$.

\begin{defn}
	We place a partial order $\prec$ on $I(n,t)$ by refinement of set-partitions. Let $M(n,t)$ be the set of minimal elements of $I(n,t)$ under $\prec$.
	
	For $x,y\in I(n,t)$, we recursively define the M\"{o}bius function to be
	\[\mu(x,y)=\begin{cases}
					1&\text{if }x=y\\
					-\sum_{x\preceq z\prec y}\mu(x,z)&\text{if }x\prec y\\
					0&\text{otherwise}
				\end{cases}\]
\end{defn}

\begin{example}
	The Hasse diagram of $I(3,1)$ under $\prec$ is given below.
		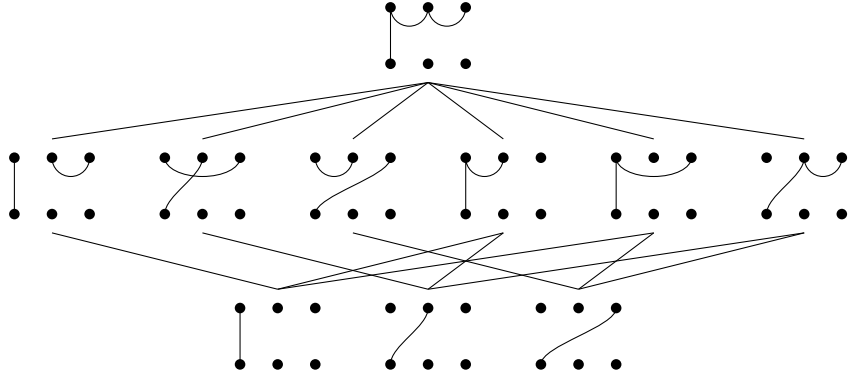
\begin{figure}[H]
			\centering
			\begin{tikzpicture}[scale=0.5]
				\foreach \x in {7,8,9,11,12,13,15,16,17}
					{\fill[black] (\x,0) circle (4pt);
					\fill[black] (\x,1.5) circle (4pt);}
					\draw (7,0)--+(0,1.5);
					\draw (11,0).. controls (11,0.5) and (12,1) .. (12,1.5);
					\draw (15,0).. controls (15,0.5) and (17,1) .. (17,1.5);
					
				\draw (8,2)--(2,3.5);\draw (8,2)--(14,3.5);\draw (8,2)--(18,3.5);
				\draw (12,2)--(6,3.5);\draw (12,2)--(14,3.5);\draw (12,2)--(22,3.5);
				\draw (16,2)--(10,3.5);\draw (16,2)--(18,3.5);\draw (16,2)--(22,3.5);
					
				\foreach \x in {1,2,3,5,6,7,9,10,11,13,14,15,17,18,19,21,22,23}
					{\fill[black] (\x,4) circle (4pt);
					\fill[black] (\x,5.5) circle (4pt);}
					\draw (1,4)--+(0,1.5);\draw (2,5.5) arc (-180:0:0.5);
					\draw (5,4)..controls (5,4.5) and (6,5) .. (6,5.5);\draw (5,5.5) arc (-180:0:1 and 0.5);
					\draw (9,4)..controls (9,4.5) and (11,5) ..(11,5.5);\draw (9,5.5) arc (-180:0:0.5);
					\draw (13,4)--++(0,1.5) arc (-180:0:0.5);
					\draw (17,4)--++(0,1.5) arc (-180:0:1 and 0.5);
					\draw (21,4)..controls (21,4.5) and (22,5) .. (22,5.5) arc (-180:0:0.5);
					
				\foreach \x in {0,1,2,3,4,5}
				\draw (4*\x+2,6)--(12,7.5);

				\foreach \x in {11,12,13}
					{\fill[black] (\x,8) circle (4pt);
					\fill[black] (\x,9.5) circle (4pt);}
				\draw (11,8)--++(0,1.5)arc(-180:0:0.5)arc(-180:0:0.5);
			\end{tikzpicture}
			\caption{The Hasse diagram of $I(3,1)$.}
		\end{figure}
	The three diagrams on the bottom row are the elements of $M(3,1)$.
\end{example}
The proof of the following proposition is valid over a field of positive characteristic.
\begin{prop}[{\cite[Proposition 4.3]{doran2000partition}}]
	A basis for $\Psi(n,t)$ is given by the set 
		\[\left\{\sum_{x\in I(r,l)}\mu(y,x)x:y\in M(n,t)\right\}\]
	Each of these basis elements has a unique non-zero term of the form $y$ for each $y\in M(n,t)$ in its sum. All other non-zero terms are $x$ for $x$ strictly greater than $y$.\label{prop:dw4.3}
\end{prop}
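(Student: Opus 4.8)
The plan is to establish membership, linear independence, and spanning separately. Two preliminary facts drive everything. First, the action of $p_{i,j}$ on the basis $I(n,t)$ of $kV(n,t)$: placing $p_{i,j}$ above a diagram $x$ merges the two blocks of $x$ meeting the northern nodes $i$ and $j$, it returns $x$ unchanged when $i$ and $j$ already lie in one block, and it returns $0$ when the merge fuses two propagating blocks (dropping the propagating count below $t$). Second, the local poset structure: for a minimal $y\in M(n,t)$ every northern node of $y$ is a singleton, the diagrams $x\succeq y$ are precisely those in which the $t$ labelled nodes of $y$ lie in distinct blocks (which then carry the corresponding labels), and each interval $[y,x]$ coincides with the corresponding interval of the partition lattice, so that $\mu(y,x)=\prod_{B}(-1)^{|B|-1}(|B|-1)!$, the product being over the blocks $B$ of $x$.

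To show $b_y:=\sum_x\mu(y,x)x\in\Psi(n,t)$, I would fix $i\neq j$ and compute the coefficient of each diagram $w$ in $p_{i,j}b_y$. Only a $w$ with $i$ and $j$ in a common block $C$ can occur, and its preimages under $p_{i,j}$ among the $x\succeq y$ are $w$ itself together with the splittings of $C$ into a part $A\ni i$ and a part $B\ni j$. Dividing out the Möbius factors of the blocks shared by all these diagrams, the coefficient becomes $f(C)+\sum_{A\sqcup B=C}f(A)f(B)$ with $f(X)=(-1)^{|X|-1}(|X|-1)!$, and a short binomial calculation shows this is $0$. Since this identity holds over $\mathbb{Z}$, it survives reduction modulo $p$; this is precisely why the characteristic-zero argument remains valid in characteristic $p$.

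Linear independence and the leading-term statement then follow at once: as $\mu(y,x)\neq0$ forces $x\succeq y$, and every $x\succ y$ fails to be minimal, the unique minimal diagram appearing in $b_y$ is $y$, with coefficient $\mu(y,y)=1$; thus the coefficients of the $b_y$ on the minimal diagrams form an identity matrix. For spanning, given $u\in\Psi(n,t)$ with coefficients $c_x$, the element $u-\sum_{y\in M(n,t)}c_yb_y$ still lies in $\Psi(n,t)$ and vanishes on every minimal diagram, so it is enough to show that any $v=\sum_x d_x x\in\Psi(n,t)$ with $d_y=0$ for all $y\in M(n,t)$ is zero. If $v\neq0$, pick $w$ minimal in its support; then $w\notin M(n,t)$, so some block of $w$ contains two northern nodes $i,j$, and the coefficient of $w$ in $0=p_{i,j}v$ equals $d_w+\sum d_x$, where the remaining $x$ are proper refinements of $w$, hence strictly below $w$ and absent from the support. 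This forces $d_w=0$, contradicting the choice of $w$.

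The main obstacle is the first step. The whole argument rests on pinning down the action of $p_{i,j}$ exactly---especially the vanishing when a propagating block is lost and the fact that $x\succeq y$ forces the labelling of $x$---and on identifying the relevant intervals with those of the partition lattice so that the classical Möbius values apply; granting this, the vanishing identity $f(C)+\sum f(A)f(B)=0$ is the one genuine computation. The independence and spanning steps are then routine order-theoretic bookkeeping.
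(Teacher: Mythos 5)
Your argument is correct and is essentially the proof of Doran and Wales that the paper invokes without reproducing: the paper only records that the proof of \cite[Proposition 4.3]{doran2000partition} remains valid in positive characteristic, which is precisely the point you make explicit by observing that the M\"obius values $\mu(y,x)=\prod_B(-1)^{|B|-1}(|B|-1)!$ and the vanishing identity $f(C)+\sum_{A\sqcup B=C}f(A)f(B)=0$ are integral and so survive reduction modulo $p$. The key step --- identifying the $p_{i,j}$-preimages of a diagram $w$ as $w$ itself together with the splittings of the block containing $i$ and $j$, both for membership in $\Psi(n,t)$ and for the spanning argument --- is handled correctly, as are the triangularity and independence claims.
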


We have an action of $\sym_n$ on the left of $I(n,t)$ by permuting the $n$ northern nodes, and an action of $\sym_t$ on the right by permuting the $t$ leftmost southern nodes. This gives a $(\sym_n,\sym_t)$-bimodule structure on $\Psi(n,t)$. Let $\sigma\in\sym_n$ and $x,y\in I(n,t)$ such that $x\prec y$. Then $\sigma x\prec\sigma y$, since $\sigma y$ will be a refinement of the set-partition represented by $\sigma x$. Therefore $\sigma$ will take one basis element as given in Proposition \ref{prop:dw4.3} to another. Similarly for $\tau\in\sym_t$ we have $x\tau\prec y\tau$.

We then have the following decomposition of $\Psi(n,t)$ as a $(\sym_n,\sym_t)$-bimodule.

\begin{prop}[{\cite[Proposition 4.4]{doran2000partition}}]
	As a $(\sym_n,\sym_t)$-bimodule,
		\[\Psi(n,t)\cong\biguplus_{\mu\vdash t}\left(\ind_{\sym_t\times \sym_{n-t}}^{\sym_n}(S_k^\mu\boxtimes 1_{\sym_{n-l}})\boxtimes S_k^\mu\right)\]
\end{prop}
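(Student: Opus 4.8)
The plan is to identify $\Psi(n,t)$ explicitly as a permutation bimodule and then read off the stated sections by inducing up a filtration of the group algebra $k\sym_t$. First I would use Proposition \ref{prop:dw4.3} to replace $\Psi(n,t)$ by the permutation module on $M(n,t)$. That proposition exhibits a basis $\{b_y:y\in M(n,t)\}$ with $b_y=\sum_x\mu(y,x)x$, each element having a unique lowest term $y\in M(n,t)$. Since both $\sigma\in\sym_n$ (permuting northern nodes) and $\tau\in\sym_t$ (permuting the $t$ leftmost southern nodes) preserve $\prec$ and permute $M(n,t)$, and the M\"obius function is invariant under these order automorphisms, one checks $\sigma b_y=b_{\sigma y}$ and $b_y\tau=b_{y\tau}$. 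Hence $\Psi(n,t)\cong k[M(n,t)]$ as $(\sym_n,\sym_t)$-bimodules. I would then describe $M(n,t)$ as a biset: a minimal diagram has each of $\bar1,\dots,\bar t$ joined to a single distinct northern node and all remaining northern nodes singleton, so $M(n,t)$ is the set of injections $f:\{1,\dots,t\}\hookrightarrow\{1,\dots,n\}$, with $\sym_n$ acting on the codomain and $\sym_t$ on the domain.

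Next I would identify this biset with a coset space. Writing $H=\sym_t\times\sym_{n-t}$ for the Young subgroup with $\sym_t=\sym_{\{1,\dots,t\}}$ and $\sym_{n-t}=\sym_{\{t+1,\dots,n\}}$, restriction of a permutation to $\{1,\dots,t\}$ gives a bijection $\sym_n/\sym_{n-t}\xrightarrow{\sim}M(n,t)$ intertwining the left $\sym_n$-actions, and the right $\sym_t$-action corresponds to $w\sym_{n-t}\mapsto w\tau\sym_{n-t}$, which is well defined because $\sym_t$ and $\sym_{n-t}$ commute. Transitivity of induction through $H$ then yields an isomorphism of $(\sym_n,\sym_t)$-bimodules
\[
\Psi(n,t)\;\cong\;k[\sym_n/\sym_{n-t}]\;\cong\;\ind_{H}^{\sym_n}\big(k\sym_t\boxtimes 1_{\sym_{n-t}}\big),
\]
where $k\sym_t\boxtimes 1_{\sym_{n-t}}$ carries the left $H$-action (left multiplication through the $\sym_t$-factor, trivial $\sym_{n-t}$-factor) and the right $\sym_t$-action by right multiplication.

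Finally I would filter the coefficient bimodule and induce. The algebra $k\sym_t$ is cellular with Specht cell modules and anti-involution $w\mapsto w^{-1}$, so as a $(\sym_t,\sym_t)$-bimodule it admits a cell-bimodule filtration whose sections are $S^\mu_k\boxtimes S^\mu_k$ for $\mu\vdash t$, the anti-involution identifying the right-hand cell module with $S^\mu_k$. Since $k\sym_n$ is free as a right $kH$-module, the functor $\ind_H^{\sym_n}(-\boxtimes 1_{\sym_{n-t}})$ is exact and leaves the right $\sym_t$-structure untouched; applying it to this filtration produces a filtration of $\Psi(n,t)$ with sections $\ind_{H}^{\sym_n}(S^\mu_k\boxtimes 1_{\sym_{n-t}})\boxtimes S^\mu_k$, which is exactly the asserted $\biguplus_{\mu\vdash t}$. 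As a numerical check, $\dim_k\Psi(n,t)=|M(n,t)|=n!/(n-t)!$ matches $\sum_{\mu\vdash t}\tfrac{n!}{t!(n-t)!}(\dim S^\mu_k)^2$ via $\sum_{\mu\vdash t}(\dim S^\mu_k)^2=t!$.

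The main obstacle is the last step. In characteristic zero $K\sym_t\cong\bigoplus_\mu S^\mu_K\boxtimes S^\mu_K$ is a genuine semisimple (Wedderburn) decomposition, so Doran and Wales obtain a direct sum; over $k$ the group algebra is not semisimple, and one has only a filtration. The content of the proposition in positive characteristic is therefore precisely that $\biguplus$ must be read as a bimodule with the indicated sections rather than a direct sum, and the care lies in verifying that the cellular bimodule filtration of $k\sym_t$ delivers $S^\mu_k$ itself (and not a dual) on the right-hand factor, which is what the anti-involution $w\mapsto w^{-1}$ guarantees.
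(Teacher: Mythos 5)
Your proposal is correct and follows essentially the same route as the paper: both identify $\Psi(n,t)$ via Proposition \ref{prop:dw4.3} with the transitive permutation module on $M(n,t)$ and reduce everything to the filtration of the regular bimodule $k\sym_t$ by sections $S^\mu_k\boxtimes S^\mu_k$, followed by inducing the left factor up to $\sym_n$. The only (cosmetic) differences are that you realise the permutation module as $k[\sym_n/\sym_{n-t}]$ and induce the bimodule $k\sym_t$ in one step, where the paper induces the trivial module from the twisted-diagonal stabiliser $H\subseteq\sym_n\times\sym_t$ in two stages, and that you justify the key filtration of $k\sym_t$ by cellularity, which is somewhat more careful in positive characteristic than the paper's appeal to Frobenius reciprocity.
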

	\begin{proof}
		By Proposition \ref{prop:dw4.3}, we can index a basis of $\Psi(n,t)$ by $M(n,t)$. Note that $\sym_n\times \sym_t$ acts transitively on this set. Let $y\in M(n,t)$ be the element below\\
				\begin{center}
				\begin{tikzpicture}[scale=0.5]
					\foreach \x in {1,...,7}
					{\fill[black] (\x,3) circle (4pt);
					\fill[black] (\x,0) circle (4pt);}
					\foreach \x in {1,...,4}
					\draw (\x,0)--+(0,3);
					
					\draw (-0.2,1.5) node {$y=$};
					\draw[decorate,decoration={brace,mirror,raise=4pt,amplitude=4pt}] (1,0)--+(3,0);
					\draw[decorate,decoration={brace,mirror,raise=4pt,amplitude=4pt}] (5,0)--+(2,0);
					\draw (2.5,-1) node {$t$};\draw (6,-1) node {$n-t$};
				\end{tikzpicture}
				\end{center}
		We see that $\sym_t\times \sym_{n-t}$ is a natural subgroup of $\sym_n$, with $\sym_t$ acting on the leftmost $t$ northern nodes and $\sym_{n-t}$ acting on the remaining northern nodes. Then the stabiliser of $y$ in $\sym_n\times \sym_t$ is the set of permutations 
		\[H=\{((\gamma,\pi),\gamma^{-1}):\pi\in\sym_{n-t},\gamma\in\sym_t\}\subseteq\sym_n\times \sym_t\]
		Since the action of $\sym_n\times\sym_t$ on $M(n,t)$ is transitive, we can write $\Psi(n,t)=\ind_H^{\sym_n\times \sym_t}1_H$. We induce first to the subgroup $(\sym_t\times \sym_{n-t})\times \sym_t$ of $\sym_n\times \sym_t$. It is clear using Frobenius reciprocity that inducing the trivial module from the subgroup $L=\{(\gamma,\gamma^{-1}):\gamma\in\sym_t\}$ of $\sym_t\times \sym_t$ to $\sym_t\times \sym_t$ gives a module with filtration $\displaystyle\biguplus_{\mu\vdash t}(S_k^\mu\boxtimes S_k^\mu)$. Since $\sym_{n-t}$ has no effect here, it follows that
			\[\ind_H^{(\sym_t\times \sym_{n-t})\times \sym_t}1_H\cong\biguplus_{\mu\vdash t}\left(S_k^\mu\boxtimes1_{\sym_{n-t}}\right)\boxtimes S_k^\mu\]
Inducing the left side of the tensor product to $\sym_n$ then gives the required result. This has no effect on the last factor, as seen for example by taking coset representatives in $\sym_n$.
	\end{proof}

Using the Littlewood-Richardson rule we obtain the following decomposition.

\begin{prop}[{\cite[Proposition 4.5]{doran2000partition}}]
	As a $\sym_n\times \sym_t$-module
		\[\Psi(n,t)\cong\biguplus_{\substack{\lambda\vdash n, \mu\vdash t\\\text{with }c^\lambda_{\mu,(n-t)}=1}}S_k^\lambda\boxtimes S_k^\mu\]
	for all $\mu\vdash t$ and for a given $\mu$ only those $\lambda$ for which $c_{\mu,(n-t)}^\lambda\neq0$. These are the $\lambda$ which can be obtained from $\mu$ by adding $n-t$ nodes, no two in a column. \label{prop:dw4.5}
\end{prop}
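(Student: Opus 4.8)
The plan is to rewrite each section of the $(\sym_n,\sym_t)$-bimodule filtration of $\Psi(n,t)$ supplied by the preceding proposition using the Littlewood--Richardson rule. The first observation is that the trivial $\sym_{n-t}$-module is exactly the Specht module $S_k^{(n-t)}$ labelled by the single-row partition $(n-t)$, so the left-hand tensor factor of each section is the induction product
\[\ind_{\sym_t\times\sym_{n-t}}^{\sym_n}\bigl(S_k^\mu\boxtimes S_k^{(n-t)}\bigr).\]
The key input is the characteristic-free statement that such an induction product of two Specht modules carries a filtration by Specht modules, with $S_k^\lambda$ occurring as a section with multiplicity equal to the Littlewood--Richardson coefficient $c^\lambda_{\mu,(n-t)}$.

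First I would justify that this filtration survives in characteristic $p$. Specht modules are defined over $R$ (indeed over $\mathbb{Z}$), and the induction product $\ind(S_R^\mu\boxtimes S_R^{(n-t)})$ admits an $R$-free filtration whose successive quotients are the integral Specht modules $S_R^\lambda$ with the stated multiplicities; since each quotient is $R$-free the filtration is split over $R$, so applying $k\otimes_R-$ produces the corresponding filtration of $k\sym_n$-modules. No appeal to semisimplicity is therefore required, and the symbol $\cong\biguplus$ is read, as elsewhere in this paper, as asserting the existence of such a filtration. I would then specialise to the Pieri case: because the second partition $(n-t)$ is a single row, $c^\lambda_{\mu,(n-t)}$ counts the ways of building $\lambda$ from $\mu$ by adding $n-t$ nodes with no two in the same column, a number that is $0$ or $1$ and equals $1$ precisely when $\lambda/\mu$ is such a horizontal strip.

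Feeding this back into the filtration from the preceding proposition, each right-hand factor $S_k^\mu$ is paired with the Specht modules $S_k^\lambda$ arising from the Pieri expansion of its left-hand partner, yielding sections $S_k^\lambda\boxtimes S_k^\mu$ indexed exactly by the pairs $(\lambda,\mu)$ with $\lambda\vdash n$, $\mu\vdash t$ and $c^\lambda_{\mu,(n-t)}=1$, as claimed. The only genuine subtlety, and hence the step I would treat with care, is the characteristic-freeness of the Littlewood--Richardson filtration: in positive characteristic Specht modules need not be irreducible, so everything must be phrased in terms of Specht \emph{filtrations} rather than decompositions into simple modules. Once this is granted the result is an immediate combination of the preceding proposition with Pieri's rule.
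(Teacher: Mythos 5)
Your argument is essentially identical to the paper's proof, which simply cites the Littlewood--Richardson rule as generalised to arbitrary fields by James and Peel and notes that the coefficients $c^\lambda_{\mu,(n-t)}$ are $0$ or $1$. Your write-up spells out the same steps in more detail (the identification of the trivial module with $S_k^{(n-t)}$, the survival of the Specht filtration under $k\otimes_R-$, and the Pieri specialisation), but the key input and the route are the same.
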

	\begin{proof}
		This follows from the Littlewood-Richardson rule, generalised to arbitrary field by James and Peel in \cite{james1979specht}. The Littlewood-Richardson coefficients $c_{\mu,(n-t)}^\lambda$ can only be $0$ or $1$. Note that as a $\sym_n\times \sym_t$-module it is multiplicity free.
	\end{proof}

\begin{prop}[{\cite[Proposition 4.6]{doran2000partition}}]
	The submodule of $\Delta_\mu^k(n)$ which is annihilated by all $p_{i,j}$ is spanned by elements of the form $u\otimes s$ for $s\in S^\mu_k$ and $u\in\Psi(n,t)$.	\label{prop:dw4.6}
\end{prop}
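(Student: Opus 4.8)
The statement to prove is Proposition \ref{prop:dw4.6}: the submodule of $\Delta_\mu^k(n)$ annihilated by all $p_{i,j}$ is spanned by elements $u \otimes s$ with $s \in S^\mu_k$ and $u \in \Psi(n,t)$, where $t = |\mu|$.

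The plan is to work directly with the explicit construction $\Delta_\mu^k(n) \cong V(n,t) \otimes_{\sym_t} S^\mu_k$, where $V(n,t)$ has the diagram basis $I(n,t)$. First I would analyse how the generators $p_{i,j}$ act on a pure tensor $v \otimes s$. By definition of the action, $p_{i,j}(v \otimes s) = (p_{i,j}v) \otimes s$, where $p_{i,j}v$ is the diagram product provided it still has exactly $t$ propagating blocks, and is $0$ otherwise. The key observation is that $p_{i,j}$ acts only on the northern nodes, merging the blocks containing $i$ and $j$; since $\Psi(n,t) \subseteq k \otimes_R V(n,t)$ is \emph{defined} to be precisely the subspace annihilated by every $p_{i,j}$, any $u \in \Psi(n,t)$ gives $p_{i,j}(u \otimes s) = (p_{i,j}u) \otimes s = 0$. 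Hence every element of the claimed form is annihilated by all $p_{i,j}$, giving one inclusion essentially for free.

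For the reverse inclusion I would argue as follows. An arbitrary element of $\Delta_\mu^k(n)$ is a sum of pure tensors; using the right $\sym_t$-action we may present it in terms of the bimodule structure. Suppose $z = \sum_x v_x \otimes s_x$ is annihilated by all $p_{i,j}$. Because the left $P_n^k(\delta)$-action and the right $\sym_t$-action commute, and because the $p_{i,j}$ act entirely through the $V(n,t)$ factor, the annihilation condition should be transferable to the $V$-component. Concretely, I would expand $z$ in the basis coming from $V(n,t) \otimes_{\sym_t} S^\mu_k$ and show that the condition $p_{i,j} z = 0$ for all $i \neq j$ forces the $V$-part of $z$ to lie in $\Psi(n,t)$ after accounting for the $\sym_t$-identifications. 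The tensor-product-over-$\sym_t$ structure means one must check that the subspace $\Psi(n,t) \otimes_{\sym_t} S^\mu_k$ is well-defined and exactly captures the annihilated elements; here the right $\sym_t$-invariance of the defining relations $p_{i,j} u = 0$ (since $p_{i,j}$ touches only northern nodes, it commutes with the right $\sym_t$-action on the leftmost southern nodes) ensures $\Psi(n,t)$ is a right $\sym_t$-submodule, so the tensor product makes sense.

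The main obstacle I anticipate is the reverse inclusion, specifically disentangling the annihilation condition across the tensor product $\otimes_{\sym_t}$. One cannot simply conclude coefficientwise that each $v_x \in \Psi(n,t)$, because distinct pure-tensor representatives are identified under the $\sym_t$-relations $v\gamma \otimes s = v \otimes \gamma s$. The careful step is to choose a good set of coset representatives for the diagrams in $I(n,t)$ modulo the right $\sym_t$-action, write $z$ uniquely in terms of these, and then verify that $p_{i,j} z = 0$ decouples into the requirement that the $V$-coefficients, as elements of $k \otimes_R V(n,t)$, individually satisfy the $\Psi(n,t)$ defining relations. I would lean on the fact that the $p_{i,j}$ either annihilate a basis diagram or fix its propagating-block count, so their action on the diagram basis is combinatorially explicit, making the decoupling a finite linear-algebra check rather than anything subtle about the field characteristic.
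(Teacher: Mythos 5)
Your forward inclusion is fine and is exactly the paper's: since $p_{i,j}(u\otimes s)=(p_{i,j}u)\otimes s$ and $\Psi(n,t)$ is by definition killed by every $p_{i,j}$, that direction is immediate. The reverse inclusion, however, is where all the content of the proposition lives, and your proposal stops at the point where the real argument has to begin. You correctly identify the obstruction (pure-tensor decompositions over $\otimes_{\sym_t}$ are not unique, so one cannot read off ``the $V$-part'' coefficientwise), but your proposed fix --- choose coset representatives and then ``verify that $p_{i,j}z=0$ decouples'' by ``a finite linear-algebra check'' --- is an assertion, not an argument. A priori the annihilated submodule could contain an element whose vanishing under each $p_{i,j}$ comes from cancellation among several diagrams, no grouping of whose coefficients lies in $\Psi(n,t)$; nothing in your outline rules this out, and the statement you hope to reduce to (each $V$-coefficient individually satisfies the defining relations of $\Psi(n,t)$) is essentially the proposition itself.

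The mechanism the paper uses, and which is missing from your proposal, has two ingredients. First, the action of $p_{i,j}$ on a diagram $x\in I(n,t)$ is \emph{triangular} with respect to the refinement order $\prec$: the product $p_{i,j}x$ is either $0$ (if merging the blocks of $i$ and $j$ destroys a propagating block), or $x$ itself (if $i$ and $j$ already lie in the same block), or a diagram strictly coarser than $x$. Your remark that the action is ``combinatorially explicit'' gestures at this but never isolates the order-theoretic statement that drives the proof. Second, given $w=\sum_x c_x\,x\otimes s_x$ with $p_{i,j}w=0$ for all $i\neq j$, one subtracts the explicit element $w'=\sum_{x}\sum_{y\in M(n,t)}c_y\,\mu(y,x)\,(x\otimes s_y)$ built from the M\"obius-function basis of Proposition \ref{prop:dw4.3}; this $w'$ is already known to be annihilated and agrees with $w$ on the $\prec$-minimal diagrams, so $w-w'$ is supported on non-minimal diagrams. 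If $w-w'\neq 0$, a $\prec$-minimal diagram $x$ in its support has two northern nodes in a common block, hence $p_{i,j}x=x$ for suitable $i,j$, and by triangularity the term $x\otimes t_x$ cannot be cancelled by the images of the other (strictly coarser or incomparable) terms, contradicting $p_{i,j}(w-w')=0$. Hence $w=w'\in\Psi(n,t)\otimes S^\mu_k$. Without this correction-plus-minimality step, or an equivalent substitute, your reverse inclusion does not go through.
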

	\begin{proof}
		First, let $u\in\Psi(n,t)$. Then for all $1\leq i<j\leq n$ and $s\in S^\mu_k$,
			\[p_{i,j}(u\otimes s)=(p_{i,j}u)\otimes s=0\otimes s=0\]
		So if $w\in\Psi(n,t)\otimes S^\mu_k$, then $p_{i,j}w=0$.
		 
		Conversely, suppose $p_{i,j}w=0$ for some $w\in V(n,t)\otimes S^\mu_k\cong\Delta_\mu^k(n)$. We can write
			\[w=\sum_{x\in I(n,t)}c_xx\otimes s_x\]
		where $c_x\in k$ and $s_x\in S^\mu_k\backslash\{0\}$. We know from \cite[Proposition 4.2]{doran2000partition} that if
			\[w'=\sum_{x\in I(n,t)}\sum_{y\in M(n,t)}c_y\mu(y,x)(x\otimes s_y)\]	
		then $p_{i,j}w'=0$, and therefore $p_{i,j}(w-w')=0$. Now
		\begin{align*}
			w-w'&=\sum_{x\in I(n,t)}c_xx\otimes s_x-\sum_{x\in I(n,t)}\sum_{y\in M(n,t)}c_y\mu(y,x)(x\otimes s_y)\\
				&=\sum_{x\in I(n,t)}x\otimes\left(c_xs_x-\sum_{y\in M(n,t)}c_y\mu(y,x)s_y\right)\\
		\end{align*}
		For $x,y\in M(n,t)$ we have $\mu(x,y)=1$ if $x=y$ and zero otherwise. Therefore there are no terms of the form $y\otimes s_y$ ($y\in M(n,t)$) in $w-w'$, and we may write
			\[w-w'=\sum_{x\in I(n,t)\backslash M(n,t)}d_xx\otimes t_x\]
		where $d_x\in k$ and $t_x\in S^\mu_k\backslash\{0\}$.
		
		If $w-w'\neq0$, then choose $x\in I(n,t)\backslash M(n,t)$ minimal with respect to $\prec$ for which $d_x\neq0$. As in the proof of \cite[Theorem 4.2]{doran2000partition} there exist $i,j$ for which $p_{i,j}x=x$, and therefore $p_{i,j}(x\otimes t_x)=x\otimes t_x$. Since $p_{i,j}$ can only join blocks of a diagram and $x$ is minimal with respect to $\prec$, we then see that $p_{i,j}(w-w')\neq0$, a contradiction.
		
		Therefore $w=w'$, and by Proposition \ref{prop:dw4.3} we see that $w\in\Psi(n,t)\otimes S^\mu_k$.
	\end{proof}
	
The following is a very restricted case of \cite[Proposition 4.7]{doran2000partition}, but is necessary for later use. The proof of the original proposition does not generalise to fields of positive characteristic.
	
\begin{prop}[{\cite[Proposition 4.7]{doran2000partition}}]
	Let $\mu$ be a partition with $|\mu|=t<p$, and suppose $\lambda\neq\mu$ is the only partition other than $\mu$ that appears as a composition factor of $\Delta_\mu^k(n)$. Then $\mu\subset\lambda$, all of the nodes in $[\lambda]/[\mu]$ are in different columns, and in fact $[\Delta_\mu^k(n):L_\lambda^k(n)]=1$.\label{prop:dw4.7}
\end{prop}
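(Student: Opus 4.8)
The plan is to combine the structural results just established (Propositions \ref{prop:dw4.5} and \ref{prop:dw4.6}) with Theorem \ref{thm:partsymblocks} to pin down the single non-trivial composition factor. The hypothesis is that $\Delta_\mu^k(n)$ has exactly two composition factors, $L_\mu^k(n)$ and one other $L_\lambda^k(n)$ with $\lambda\neq\mu$, and since $|\mu|=t<p$ the symmetric group $\sym_t$ is semisimple, so the Specht module $S^\mu_k$ is simple and equals $D^\mu_k$. The overall strategy is to identify $\lambda$ using the head/socle structure of the cell module together with the radical layer described by $\Psi(n,t)$, and then to read off the multiplicity and the column condition from Proposition \ref{prop:dw4.5}.

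First I would locate $\lambda$ by size. Since $\Delta_\mu^k(n)$ is a cell module with simple head $L_\mu^k(n)$, the other factor $L_\lambda^k(n)$ must lie in the radical. By the cellular ordering (the reverse of $<_d$), any composition factor $L_\lambda^k(n)$ of $\Delta_\mu^k(n)$ satisfies $\lambda\geq_d\mu$ in dominance-with-size, and in particular $|\lambda|\geq|\mu|=t$; combined with $\lambda\neq\mu$ this forces either $|\lambda|>t$ or $\lambda\rhd\mu$ with $|\lambda|=t$. To rule out the equal-size case I would invoke Theorem \ref{thm:partsymblocks}: if $|\lambda|=t$ then $[\Delta_\mu^k(n):L_\lambda^k(n)]=[S^\mu_k:D^\lambda_k]$, and since $S^\mu_k$ is simple (as $t<p$) this decomposition number is $0$ unless $\lambda=\mu$. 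Hence $|\lambda|>t$, so $\lambda$ is a strictly larger partition and $\mu\subset\lambda$ is not yet immediate but $|\lambda|>|\mu|$ is forced.

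Next I would establish the inclusion $\mu\subset\lambda$ and the column condition simultaneously. The key is that the submodule of $\Delta_\mu^k(n)$ annihilated by all the $p_{i,j}$ is, by Proposition \ref{prop:dw4.6}, exactly $\Psi(n,t)\otimes S^\mu_k$, and by Proposition \ref{prop:dw4.5} this decomposes as a $\sym_n\times\sym_t$-module into the multiplicity-free sum of $S_k^{\lambda'}\boxtimes S_k^\mu$ over those $\lambda'\vdash n$ obtainable from $\mu$ by adding $n-t$ nodes no two in a column. I would argue that the non-trivial composition factor $L_\lambda^k(n)$, being annihilated by the $p_{i,j}$ (this is where I must verify that the relevant layer of the cell module is killed by the $p_{i,j}$, using that $L_\lambda^k(n)$ sits in the radical and carries fewer propagating blocks), appears inside this socle-type submodule. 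Consequently $\lambda$ must be among the partitions appearing in Proposition \ref{prop:dw4.5}'s list: it is obtained from $\mu$ by adding nodes no two in a column, which gives both $\mu\subset\lambda$ and the statement that all nodes of $[\lambda]/[\mu]$ lie in distinct columns. The multiplicity-freeness in Proposition \ref{prop:dw4.5} then yields $[\Delta_\mu^k(n):L_\lambda^k(n)]=1$.

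The main obstacle, I expect, is the bridging step: justifying rigorously that the second composition factor $L_\lambda^k(n)$ really does contribute to the $p_{i,j}$-annihilated submodule $\Psi(n,t)\otimes S^\mu_k$ rather than merely sitting somewhere in the radical. This requires understanding how $L_\lambda^k(n)$ interacts with the filtration of $P_n^k(\delta)$ by number of propagating blocks, and in particular that a simple module labelled by a strictly larger partition $\lambda$ is detected on the $p_{i,j}$-invariants. I would handle this by relating the $p_{i,j}$-action to the localisation functor $F_n$ and the idempotent filtration, using that $\lambda\vdash|\lambda|>t$ means $L_\lambda^k(n)$ is associated to a deeper layer $J_n^{(\cdot)}$ than $\Delta_\mu^k(n)$'s head; the restriction to $t<p$ is precisely what makes the combinatorics of Propositions \ref{prop:dw4.5} and \ref{prop:dw4.6} clean enough to force uniqueness and multiplicity one.
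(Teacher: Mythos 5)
Your overall route is the same as the paper's: reduce to the $p_{i,j}$-annihilated submodule, identify it with $\Psi(n,t)\otimes S^\mu_k$ via Proposition \ref{prop:dw4.6}, and read off $\mu\subset\lambda$, the column condition and multiplicity one from the multiplicity-free decomposition in Proposition \ref{prop:dw4.5}. Your preliminary step ruling out $|\lambda|=|\mu|$ via Theorem \ref{thm:partsymblocks} and semisimplicity of $k\sym_t$ is correct and slightly more explicit than the paper, which handles this implicitly by localising. However, the step you yourself flag as ``the main obstacle'' is a genuine gap, and your proposed way of closing it does not work as stated. First, you never localise; the paper's first move is to assume $\lambda\vdash n$, and this is what makes $L_\lambda^k(n)$ a module for $k\sym_n\cong P_n^k(\delta)/J_n^{(n-1)}$ and hence annihilated by each $p_{i,j}\in J_n^{(n-1)}$ \emph{as a subquotient}. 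Without that reduction, a simple $L_\lambda^k(n)$ with $t<|\lambda|<n$ is not a symmetric group module and there is no reason for the $p_{i,j}$ to kill it. (Also, your heuristic that $L_\lambda^k(n)$ ``carries fewer propagating blocks'' is backwards: $|\lambda|>|\mu|$ means \emph{more} propagating blocks.)

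Second, even granting that every composition factor of the radical $W$ is killed by $p_{i,j}$ as a subquotient, it does not follow formally that the submodule $W$ itself satisfies $p_{i,j}W=0$, which is what Proposition \ref{prop:dw4.6} requires before you may conclude $W\subseteq\Psi(n,t)\otimes S^\mu_k$. The paper bridges this with a short but essential trick you are missing: writing $0=W_0\subset W_1\subset\dots\subset W_r=W$ with each $W_i/W_{i-1}\cong L_\lambda^k(n)$, one gets $p_{i,j}W_i\subseteq W_{i-1}$, hence $p_{i,j}^rW=0$, and since $p_{i,j}$ is an \emph{idempotent} this forces $p_{i,j}W=0$. Your appeal to ``the localisation functor $F_n$ and the idempotent filtration'' gestures at the right objects but does not supply this argument. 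With the localisation step and the idempotency argument inserted, the rest of your outline (restricting the sum in Proposition \ref{prop:dw4.5} to the terms with second factor $S^\mu_k$, using $t<p$, and invoking $c^\lambda_{\mu,(n-t)}=1$) matches the paper and is sound.
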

	\begin{proof}
		By localising we may assume that $\lambda\vdash n$. By the cellularity of $P_n^k(\delta)$ we see that $L_\mu^k(n)$ appears precisely once as a composition factor of $\Delta_\mu^k(n)$, as the head of the module. Therefore $\Delta_\mu^k(n)$ has structure
		\begin{center}
			$L_\mu^k(n)$\\[5pt]
			$\displaystyle\biguplus L_\lambda^k(n)$
		\end{center}
	Thus there is a submodule $W\subset \Delta_\mu^k(n)$ isomorphic to $\biguplus L_\lambda^k(n)$, and therefore a sequence of modules
		\[0=W_0\subset W_1\subset W_2\subset\dots\subset W_{r-1}\subset W_r=W\]
		such that $W_i/W_{i-1}\cong L_\lambda^k(n)$ for $1\leq i\leq r$. Let $w_r\in W_r=W$, and consider $p_{i,j}w_r$. Since \mbox{$W_r/W_{r-1}\cong L_\lambda^k(n)$} is a module for the symmetric group, it must be annihilated by $p_{i,j}$. Therefore $p_{i,j}w_r=w_{r-1}$ for some $w_{r-1}\in W_{r-1}$. By the same argument, we also see that $p_{i,j}w_{r-1}=w_{r-2}$ for some $w_{r-2}\in W_{r-2}$, and so $p_{i,j}^2w_r=w_{r-2}$. Repeating this process we arrive at $p_{i,j}^rw_r=0$, and since $p_{i,j}$ is an idempotent we deduce that $p_{i,j}w_r=0$ for all $w_r\in W$. By Proposition \ref{prop:dw4.6}, $W$ must then be in $\Psi(n,t)\otimes S^\mu_k$. 
		
		Consider now the module $W_1\cong L_\lambda^k(n)$. Since $|\mu|=t<p$, as a left $k\sym_n$-module it is $S^\lambda_k$ and we can find an idempotent $e_\mu$ such that $S^\mu_k=k\sym_te_\mu$. Then for $\tau\neq\mu$, $|\tau|=|\mu|$, we have $e_\tau k\sym_te_\mu=0$ and so $e_\tau k\sym_t\otimes_{\sym_t}k\sym_te_\mu=0$. Therefore $(S^\lambda_k\boxtimes S^\tau_k)\otimes_{\sym_t} S^\mu_k=0$ if $\tau\neq\mu$. This means the only terms from Proposition \ref{prop:dw4.5} we need consider in $\Psi(n,t)$ are those $\{\lambda,\mu\}$ with this given $\mu$. By Proposition \ref{prop:dw4.5}, $\mu\subset\lambda$ and the nodes of $[\lambda]/[\mu]$ are in different columns. Furthermore, $c^\lambda_{\mu,(n-t)}=1$, and so there is a unique copy.
	\end{proof}

In the rest of this section, we will consider separately different cases concerning the values of $n$ and $\delta$. The first distinction we make is due to the following Lemma:
\begin{lem}[{\cite[Corollary 5.8]{bowmanblocks}}]\label{lem:deltapcondition}
	Suppose there exist partitions $\lambda\vdash n$, $\mu\vdash n-t$ $(t>0)$ with $[\Delta_\mu^k(n;\delta):L_\lambda^k(n;\delta)]\neq0$. Then $\delta\in\mathbb{F}_p$, the prime subfield of $k$.
\end{lem}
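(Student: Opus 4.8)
The plan is to run a block-theoretic reduction first, isolate the single residual configuration that the block data alone cannot exclude, and then attack that configuration by lifting to characteristic zero.

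First I would use Theorem \ref{thm:pblocks} to turn the hypothesis into a statement about orbits: a nonzero decomposition number forces $\mu$ and $\lambda$ into the same block, so by \eqref{eqn:orbitssequence} we have $\hat\mu+\rho(\delta)\sim_p\hat\lambda+\rho(\delta)$. Writing out the $\beta_\delta$-sequences, this says that the multiset $\{\delta-|\lambda|\}\cup\{\lambda_i-i\}$ agrees modulo $p$ with $\{\delta-|\mu|\}\cup\{\mu_j-j\}$. The point is to separate the single $\delta$-bearing coordinate from the integer bead coordinates. If $\delta\notin\mathbb{F}_p$ then $\delta-|\lambda|$ and $\delta-|\mu|$ are the only entries lying outside $\mathbb{F}_p$, so they must be matched to each other; this yields $p\mid t$ together with $\Gamma(\lambda,b)=\Gamma(\mu,b)$, i.e. $\lambda$ and $\mu$ have equal $p$-cores. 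If instead $\delta-|\lambda|$ is matched to some integer bead value $\mu_j-j$, then $\delta\equiv\mu_j-j+|\mu|\pmod p$ lies in $\mathbb{F}_p$ and we are finished. Thus everything reduces to eliminating the residual case $p\mid t$, $\delta\notin\mathbb{F}_p$.

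In that residual case I would bring in the module structure rather than the orbit. Since $\lambda\vdash n$ is of maximal size it is maximal in $\leq_d$, hence minimal in the cell order, so $L_\lambda^k(n)$ lies in the socle of $\Delta_\mu^k(n)$; consequently the largest submodule $W$ of $\Delta_\mu^k(n)$ all of whose composition factors have size $n$ is nonzero. Every such factor is inflated from $k\sym_n$ and so is annihilated by every $p_{i,j}$, and running the idempotence argument of Proposition \ref{prop:dw4.7} shows that $W$ itself is annihilated by all $p_{i,j}$. By Proposition \ref{prop:dw4.6} this gives $W\subseteq\Psi(n,|\mu|)\otimes S^\mu_k$, and by Proposition \ref{prop:dw4.5} it identifies $D^\lambda_k=L_\lambda^k(n)$ as a $k\sym_n$-composition factor of $\bigoplus_\alpha S^\alpha_k$, the sum running over those $\alpha\vdash n$ obtained from $\mu$ by adding a horizontal $t$-strip. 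This is a concrete, self-contained reduction to the symmetric group.

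The finish is where $\delta$ must finally enter, and it is the main obstacle. The invariants produced so far — the $p$-core agreement, the orbit membership, and the $\sym_n$-reduction — are all insensitive to $\delta$ and all consistent with $p\mid t$; the $\delta$-dependence of $\Delta_\mu^k(n)$ sits entirely in the loop contributions of the partition action and has not yet been used. My intended route is to lift to characteristic zero: since $\delta\notin\mathbb{F}_p$ forces $\delta\notin\mathbb{Z}$, there are no $\delta$-pairs (Definition \ref{def:deltapair}), so by Theorem \ref{thm:martinblocks} every block of $P_n^K(\delta)$ is a singleton, $P_n^K(\delta)$ is semisimple, and $\Delta_\lambda^K(n),\Delta_\mu^K(n)$ are non-isomorphic simples with $\mathrm{Hom}_{P_n^K}(\Delta_\lambda^K(n),\Delta_\mu^K(n))=0$. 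The cross-size occurrence found above would, if it lifted to a nonzero $R$-homomorphism surviving to $K$, contradict this. The difficulty — and the reason the orbit dichotomy is not enough — is precisely that reduction modulo $p$ can create homomorphisms that are absent over $K$, and Lemma \ref{lem:modred} only propagates homomorphisms in the direction $K\to k$. Controlling this gap is the crux: one must show that a genuine composition multiplicity across sizes (not merely block equality) forces the last node of the connecting strip to have content congruent to $\delta-|\mu|$ modulo $p$, and since contents are integers this gives $\delta\in\mathbb{F}_p$. Supplying this content/linkage condition is where the alcove-geometric refinement recorded by the size-changing reflections $s_{0,j,rp}$ (equivalently, by the mark $\vee$ of the $\delta$-marked abacus lying on an actual runner) from \cite{bowmanblocks} is needed, in place of the coarse orbit description used in the first step.
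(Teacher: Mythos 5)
The paper does not actually prove this lemma: it is imported wholesale as Corollary 5.8 of \cite{bowmanblocks}, so there is no internal argument to compare yours against, and your proposal has to stand on its own. It does not: there is a genuine gap, and you have located it yourself. Your first reduction is correct — the orbit condition $\hat\mu+\rho(\delta)\sim_p\hat\lambda+\rho(\delta)$ forces either $\delta\equiv(\text{integer})\pmod p$ or $p\mid t$ together with $\Gamma(\lambda,b)=\Gamma(\mu,b)$ — and you are right that the residual case is genuinely realised at the level of block data (e.g.\ $\mu=\emptyset$ and $\lambda=(p)$ lie in the same orbit for every $\delta$), so Theorem \ref{thm:pblocks} alone can never finish the job. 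But both of your attempts to kill the residual case fail. In the module-theoretic step, the inference ``$\lambda$ is minimal in the cell order, hence $L_\lambda^k(n)$ lies in the socle of $\Delta_\mu^k(n)$'' is false in general: a composition factor with minimal label can sit in a middle Loewy layer above factors of intermediate size, and then the idempotence argument of Proposition \ref{prop:dw4.7} (which needs \emph{every} factor of the submodule $W$ to be a $k\sym_n$-module before it can conclude $p_{i,j}W=0$) does not apply, so you cannot place $L_\lambda^k(n)$ inside $\Psi(n,|\mu|)\otimes S^\mu_k$. Moreover, even granting that step, its conclusion is manifestly independent of $\delta$ and so cannot possibly output $\delta\in\mathbb{F}_p$ — as you yourself observe.

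The characteristic-zero lift also cannot close the gap, again for the reason you state: Lemma \ref{lem:modred} only transports homomorphisms from $K$ to $k$, and semisimplicity of $P_n^K(\delta)$ places no upper bound on $\mathbf{D}(P_n^k(\delta))$ beyond what cellularity already gives. What is actually needed is a necessary condition for $[\Delta_\mu^k(n;\delta):L_\lambda^k(n;\delta)]\neq0$ strictly finer than block membership — in \cite{bowmanblocks} this comes from an induction along the tower $P_{n-\half}^k(\delta)\subset P_n^k(\delta)$ using the exact sequences \eqref{eq:partexact} and the Morita equivalence of Proposition \ref{prop:morita}, which forces the content of an added node to be congruent to $\delta$ minus a partition size modulo $p$, whence $\delta\in\mathbb{F}_p$. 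Deferring precisely this step back to \cite{bowmanblocks} is circular: the statement you are proving \emph{is} Corollary 5.8 of that paper, and the step you leave unsupplied is its entire content.
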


\subsection{$\delta\not\in\mathbb{F}_p$}

We will show that in this case the decomposition matrix $\mathbf{D}(P_n^k(\delta))$ is equal to a block diagonal matrix, with components equal to the decomposition matrices of symmetric group algebras. A proof of this result can also be found in \cite[Corollary 6.2]{hartmann2010cohomological}.

\begin{thm}[{\cite[Corollary 6.2]{hartmann2010cohomological}}]\label{thm:r>pdeltanot}
	Suppose $\delta\not\in\mathbb{F}_p$. Then the decomposition matrix $\mathbf{D}(P_n^k(\delta))$ is equal to the block diagonal matrix
		\begin{equation}\label{eq:blockdecomp}\mathbf{S}=\left(\begin{array}{c@{}ccc@{}c}
		\setlength{\fboxsep}{20pt}\framebox{$\mathbf{D}(k\sym_n)$}&&&&\\
		&\setlength{\fboxsep}{15pt}\framebox{$\mathbf{D}(k\sym_{n-1})$}&&&\\
		&&\ddots&&\\
		&&&\setlength{\fboxsep}{5pt}\framebox{$\mathbf{D}(k\sym_1)$}&\\
		&&&&\setlength{\fboxsep}{3pt}\framebox{$\mathbf{D}(k\sym_0)$}
		\end{array}
	\right)\end{equation}
\end{thm}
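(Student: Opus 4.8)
The plan is to show that when $\delta\not\in\bbf_p$, the cell modules cannot interact across different sizes, so the decomposition matrix decomposes into independent blocks indexed by the size of the labelling partition, each of which reduces to the symmetric group case. First I would invoke Lemma \ref{lem:deltapcondition}: it states that $[\Delta_\mu^k(n;\delta):L_\lambda^k(n;\delta)]\neq0$ with $\lambda\vdash n$ and $\mu\vdash n-t$ for $t>0$ forces $\delta\in\bbf_p$. Taking the contrapositive, since we assume $\delta\not\in\bbf_p$, there can be no nonzero decomposition number between a cell module and a simple module whose labelling partitions have different sizes, at least when the simple is labelled by a partition of the full $n$. I would then extend this across all sizes by localisation: applying the functor $F_n$ repeatedly sends $\Delta_\mu^k(n)$ to $\Delta_\mu^k(n-1)$ (when $\mu\in\Lambda_{\leq n-1}$) and likewise carries simple modules appropriately, so the same vanishing of decomposition numbers between partitions of distinct sizes holds for all cell modules $\Delta_\mu^k(n)$ and simples $L_\lambda^k(n)$, not only those with $\lambda\vdash n$.

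The consequence is that the decomposition matrix is block diagonal, with one block for each value $0\leq r\leq n$ corresponding to partitions of size exactly $r$. The remaining task is to identify the $r$-th diagonal block with $\mathbf{D}(k\sym_r)$. For this I would appeal to Theorem \ref{thm:partsymblocks} (the result of Hartmann--Paget), which gives the exact equality
\[
	[\Delta_\mu^k(n;\delta):L_\lambda^k(n;\delta)]=[S^\mu_k:D^\lambda_k]
\]
whenever $\lambda,\mu\vdash n-t$ and $\lambda$ is $p$-regular. Thus the submatrix indexed by rows $\{\mu:\mu\vdash r\}$ and columns $\{\lambda\in\Lambda_r^\ast\}$ is precisely the symmetric group decomposition matrix $\mathbf{D}(k\sym_r)$. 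Combining the off-diagonal vanishing from the first paragraph with this identification of the diagonal blocks yields exactly the block diagonal form $\mathbf{S}$ displayed in \eqref{eq:blockdecomp}, with the blocks ordered from $\mathbf{D}(k\sym_n)$ down to $\mathbf{D}(k\sym_0)$.

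I expect the main obstacle to be the bookkeeping needed to guarantee the off-diagonal blocks genuinely vanish in \emph{both} directions and for \emph{all} pairs of sizes, rather than only in the single configuration covered literally by Lemma \ref{lem:deltapcondition}. Concretely, one must verify that a composition factor $L_\lambda^k(n)$ of $\Delta_\mu^k(n)$ with $|\lambda|\neq|\mu|$ is impossible regardless of whether $|\lambda|<|\mu|$ or $|\lambda|>|\mu|$; the cellular ordering $\leq_d$ (dominance with size) already forbids composition factors of strictly larger size, so the content is really the case $|\lambda|<|\mu|$, which is where Lemma \ref{lem:deltapcondition} is applied after reducing via localisation so that $\lambda$ is a partition of the top size of the relevant smaller algebra. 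Once this reduction is set up cleanly, the proof is essentially a direct assembly of the two cited results, so the difficulty is organisational rather than conceptual.
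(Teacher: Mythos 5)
Your overall strategy is exactly the paper's: kill the off-diagonal blocks using cellularity in one direction and Lemma \ref{lem:deltapcondition} (after localising so that the relevant partition has full size) in the other, then identify the diagonal blocks with $\mathbf{D}(k\sym_r)$ via Theorem \ref{thm:partsymblocks}. However, your final paragraph swaps the two cases, and as written the justification for one of them is invalid. The cellular order is the \emph{reverse} of dominance-with-size, so cellularity forbids composition factors $L_\lambda^k(n)$ of $\Delta_\mu^k(n)$ with $|\lambda|<|\mu|$, not with $|\lambda|>|\mu|$ --- indeed $\Delta_\emptyset^k(p;p-1)$ has $L_{(p)}^k(p;p-1)$ as a composition factor, so labels of strictly larger size certainly occur. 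Consequently the case that genuinely needs Lemma \ref{lem:deltapcondition} is $|\lambda|>|\mu|$, which is the only case its hypotheses ($\lambda\vdash n$, $\mu\vdash n-t$ with $t>0$, reached by localising until $\lambda$ has top size) can cover; it cannot be applied to $|\lambda|<|\mu|$ as your last paragraph claims, since localising until $\mu$ has top size still leaves $\lambda$ strictly smaller and outside the scope of the lemma. Your first paragraph does apply the lemma in the correct direction, so the repair is simply to delete the reversed claim at the end and let cellularity dispose of $|\lambda|<|\mu|$ directly, which is precisely what the paper does.
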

	\begin{proof}
		By the cellularity of $P_n^k(\delta)$ we immediately see that $[\Delta_\mu^k(n):L_\lambda^k(n)]=0$ if $|\lambda|<|\mu|$.

		If $|\lambda|>|\mu|$, then by Lemma \ref{lem:deltapcondition} we see that as $\delta\not\in\mathbb{F}_p$, the decomposition number \mbox{$[\Delta_\mu^k(n;\delta):L_\lambda^k(n;\delta)]$} must be zero.
		
		If now $|\lambda|=|\mu|$, then by localising we have
			\[[\Delta_\mu^k(n):L_\lambda^k(n)]=[S^\mu_k:D^\lambda_k]\]
		and the result follows as these are the entries of the decomposition matrix of $k\sym_{|\lambda|}$.
	\end{proof}
		
\subsection{$n<p$ and $\delta\in\mathbb{F}_p$}

We will see that in this case, any non-zero decomposition numbers arise from reducing homomorphisms in the characteristic zero case of the partition algebra $P_n^K(\delta+rp)$ for some $r\in\mathbb{Z}$.

\begin{lem}\label{lem:r<pdeltain}
	Let $n<p$ and $\delta\in\mathbb{F}_p$. If $[\Delta_\mu^k(n;\delta):L_\lambda^k(n;\delta)]\neq0$ then either $\lambda=\mu$ or $\mu\hookrightarrow_{\delta+rp}\lambda$ for a unique $r\in\mathbb{Z}$.
\end{lem}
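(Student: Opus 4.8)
The plan is to reduce the claimed statement to the block classification already available, namely Theorem~\ref{thm:pblocks} in prime characteristic and Theorem~\ref{thm:part0blocks} (Martin's $\delta$-pair description) in characteristic zero. First I would observe that if $[\Delta_\mu^k(n;\delta):L_\lambda^k(n;\delta)]\neq0$ with $\lambda\neq\mu$, then $\lambda$ and $\mu$ must lie in the same block of $P_n^k(\delta)$, so by Theorem~\ref{thm:pblocks} we have $\mu\in\mathcal{O}^p_\lambda(n;\delta)$. Using the equivalent formulation \eqref{eqn:orbitssequence}, this means $\hat\mu+\rho(\delta)\sim_p\hat\lambda+\rho(\delta)$; equivalently the two $\delta$-marked abaci have the same runner-content data $\Gamma_\delta$. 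The goal is to upgrade this ``same block mod $p$'' condition into the much stronger assertion that $\mu$ and $\lambda$ actually form a genuine $(\delta+rp)$-pair in characteristic zero.

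The key combinatorial input is the hypothesis $n<p$, which severely constrains the affine Weyl group action. Since every partition involved has size at most $n<p$, the entries of $\hat\lambda+\rho(\delta)$ and $\hat\mu+\rho(\delta)$ lie in a window narrow enough that being congruent modulo $p$ and having bounded size forces the two sequences to differ by a single translation by a multiple of $p$ on exactly one coordinate. Concretely, I would argue that the affine reflection $s_{i,j,rp}$ taking $\hat\mu$ to $\hat\lambda$ must, for $n<p$, be realisable by a \emph{finite} reflection in the $\delta'$-shifted action for the shifted parameter $\delta'=\delta+rp$, i.e.\ $\hat\mu\in W_n\cdot_{\delta+rp}\hat\lambda$. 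Lifting $\delta$ to the integer $\delta+rp$ in $R$ (legitimate since $\delta\in\mathbb{F}_p$ means $\delta$ is an integer mod $p$, and we may choose the representative $\delta+rp$), Theorem~\ref{thm:part0blocks} then identifies this orbit condition with $\mu$ and $\lambda$ being in the same characteristic-zero block of $P_n^K(\delta+rp)$, and Theorem~\ref{thm:martinblocks} describes such blocks as chains of $(\delta+rp)$-pairs. Since the orbit is generated by a single reflection, $\mu$ and $\lambda$ are adjacent in such a chain, giving $\mu\hookrightarrow_{\delta+rp}\lambda$.

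The uniqueness of $r$ is the remaining point. Here I would use that $\mu\hookrightarrow_{\delta+rp}\lambda$ pins down $r$ through the content condition in Definition~\ref{def:deltapair}: the last node of the added row-strip has content exactly $(\delta+rp)-|\mu|$, and since the content of a fixed node in $[\lambda]/[\mu]$ is a determined integer, the equation $\text{content}+|\mu|=\delta+rp$ has at most one solution $r$ once $\delta$ and the partitions are fixed. Thus $r$ is forced.

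\textbf{Main obstacle.} The hard part will be the combinatorial step showing that, under $n<p$, the ``same block modulo $p$'' condition $\hat\mu+\rho(\delta)\sim_p\hat\lambda+\rho(\delta)$ genuinely lifts to a single finite reflection for the shifted parameter, rather than merely agreeing after reduction mod $p$. One must rule out the possibility that $\lambda$ and $\mu$ are linked only through a longer chain of affine reflections or through a wrap-around that has no characteristic-zero counterpart; the bound $n<p$ is exactly what prevents beads on the abacus from straddling more than one ``period,'' so the careful bookkeeping of bead positions on the $\delta$-marked abacus (comparing $\Gamma_\delta(\mu,b)$ and $\Gamma_\delta(\lambda,b)$) is where the real work lies.
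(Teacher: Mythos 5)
There is a genuine gap at the heart of your argument: you try to derive $\mu\hookrightarrow_{\delta+rp}\lambda$ from block membership alone, i.e.\ from $\hat\mu+\rho(\delta)\sim_p\hat\lambda+\rho(\delta)$ together with $n<p$. This cannot work, because the same-block relation is an equivalence relation that contains many pairs which are not $\delta$-pairs for any lift of $\delta$. For a concrete counterexample to the combinatorial claim you need, take $p=7$, $n=6$, $\delta=0$: the chain $\emptyset\hookrightarrow_0(1)\hookrightarrow_0(1^2)\hookrightarrow_0\cdots$ puts $\emptyset,(1),(1^2),\dots,(1^6)$ all in one block of $P_6^K(0)$, hence of $P_6^k(0)$, so $\hat\emptyset+\rho(0)\sim_p\widehat{(1^2)}+\rho(0)$; yet $\emptyset$ and $(1^2)$ differ by two nodes in two different rows and so are not a $(0+7r)$-pair for any $r$ (and indeed $[\Delta_\emptyset:L_{(1^2)}]=0$). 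Your assertion that for $n<p$ the two $\beta_\delta$-sequences must ``differ by a single translation by a multiple of $p$ on exactly one coordinate,'' and the subsequent claim that $\mu$ and $\lambda$ are ``adjacent in the chain'' because ``the orbit is generated by a single reflection,'' are exactly where this fails: the orbit $W_n\cdot_{\delta+rp}\hat\lambda$ is the whole chain, so orbit membership does not distinguish adjacent from non-adjacent elements. The conclusion of the lemma is strictly stronger than Theorem~\ref{thm:pblocks}, so no amount of abacus bookkeeping applied only to the block condition can prove it.

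What is missing is any use of the hypothesis $[\Delta_\mu^k(n;\delta):L_\lambda^k(n;\delta)]\neq0$ beyond its consequence that $\lambda$ and $\mu$ share a block. The paper's proof uses this hypothesis essentially, via induction on $n$: since $n<p$, $L_\lambda^k(n;\delta)=\Delta_\lambda^k(n;\delta)\cong S_k^\lambda$, so $\mathrm{res}_n$ of it is a direct sum of simples $\Delta_\nu^k(n-\frac12;\delta)$ with $\nu\triangleleft\lambda$; combining the exact sequence \eqref{eq:partexact} for $\mathrm{res}_n\Delta_\mu^k(n;\delta)$ with the Morita equivalence of Proposition~\ref{prop:morita} produces a nonzero decomposition number $[\Delta_\eta^k(n-1;\delta-1):L_\nu^k(n-1;\delta-1)]$ with $\eta\in\{\mu\}\cup\{\eta\triangleleft\mu\}$, to which the inductive hypothesis applies. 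Only after that does the abacus/content analysis enter, in a case-by-case comparison of where the nodes $\lambda/\nu$ and $\mu/\eta$ sit, with $n<p$ used to promote congruences of contents modulo $p$ to equalities. Your treatment of the uniqueness of $r$ (via the content equation $\mathrm{content}+|\mu|=\delta+rp$) is fine, but the existence part needs to be rebuilt along these lines.
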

	\begin{proof}
		By localising we may assume that $\lambda\vdash n$. We will prove this result by induction on $n$. If $n=0$ then we have $\lambda=\mu=\emptyset$ and the result clearly holds by the cellularity of $P_0^k(\delta)$.
		
		Since $n<p$, we must have $L_\lambda^k(n;\delta)=\Delta_\lambda^k(n;\delta)\cong S_k^\lambda$, the Specht module. If we apply the restriction functor to this module, then by the branching rule the result is non-zero:
			\[\mathrm{res}_nL_\lambda^k(n;\delta) =\mathrm{res}_n\Delta_\lambda^k(n;\delta)\cong\biguplus_{\nu\triangleleft\lambda}\Delta_\nu^k(n-\textstyle\frac{1}{2};\delta)\]
			Since $\nu\vdash n-1$ all the modules in this filtration are Specht modules, and since $n<p$ they are also simple.
		Thus $[\Delta_\mu^k(n;\delta):L_\lambda^k(n;\delta)]\neq0 \implies [\mathrm{res}_n\Delta_\mu^k(n;\delta):L_\nu^k(n-\frac{1}{2};\delta)]\neq0$ for some $\nu\triangleleft\lambda$. Recall that we have an exact sequence
			\[0\longrightarrow\biguplus_{\eta\triangleleft\mu}\Delta_\eta^k(n-\textstyle\frac{1}{2};\delta) \longrightarrow\mathrm{res}_n\Delta_\mu^k(n)\longrightarrow\Delta_\mu^k(n-\frac{1}{2};\delta)\longrightarrow0\]
		and therefore a filtration of $\mathrm{res}_n\Delta_\mu^k(n;\delta)$ by modules $\Delta_\eta^k(n-\frac{1}{2};\delta)$ with $\eta\triangleleft\mu$ or $\eta=\mu$.
		
		Using the Morita equivalence from Proposition \ref{prop:morita} we must therefore have\linebreak \mbox{$[\Delta_\eta^k(n-1;\delta-1):L_\nu^k(n-1;\delta-1)]\neq0$}. So by induction on $n$, $\eta$ and $\nu$ must be a $(\delta-1+rp)$-pair for some $r\in\mathbb{Z}$.
		
		Let $\eta=(\eta_1,\dots,\eta_m)$. Then there is some $i$ such that
			\[\nu=(\eta_1,\dots,\eta_{i-1},\delta-1+rp-|\eta|+i,\eta_{i+1},\dots,\eta_m)\]
		Suppose first that $\eta=\mu$. There is some $j$ such that $\lambda=\nu+\varepsilon_j$. If $j=i$ then
			\begin{align*}
				\lambda&=(\nu_1,\dots,\nu_{i-1},\nu_i+1,\nu_{i+1},\dots,\nu_m)\\
						&=(\eta_1,\dots,\eta_{i-1},\delta+rp-|\eta|+i,\eta_{i+1},\dots,\eta_m)\\
						&=(\mu_1,\dots,\mu_{i-1},\delta+rp-|\mu|+i,\mu_{i+1},\dots,\mu_m)
			\end{align*}
		and so $\mu\hookrightarrow_{\delta+rp}\lambda$. If $j\neq i$, then since $\lambda$ and $\mu$ are in the same $k$-block we must have $\hat\mu+\rho(\delta)\sim_p\hat\lambda+\rho(\delta)$. We calculate
		\begin{align*}
			|\lambda|&=|\mu|-\mu_i+(\delta-1+rp-|\mu|+i)+1\\
					&=\delta+rp-\mu_i+i
		\end{align*}
	and so
		\[\hat\lambda+\rho(\delta)=(\mu_i-i-tp,\mu_1-1,\mu_2-2,\dots,\mu_j+1-j,\dots,\delta-1+rp-|\mu|,\dots,\mu_m-m)\]
	By pairing equal elements from $\hat\lambda+\rho(\delta)$ and $\hat\mu+\rho(\delta)$ we are left with
		\[\{\mu_i-i-tp,\mu_j+1-j,\delta-1+rp-|\mu|\}\sim_p\{\mu_i-i,\mu_j-j,\delta-|\mu|\}\]
	Clearly $\mu_i-i-rp\equiv\mu_i-i$ (mod $p$), so we must have $\delta-1+rp-|\mu|\equiv\mu_j-j$ (mod $p$). These are the contents of the final node in row $i$ of $\lambda$ and the penultimate node in row $j$ respectively, and since $|\lambda|=r<p$ these cannot differ by $p$ or more. Hence $\delta-1+rp-|\mu|=\mu_j-j$. But this cannot be true as these are the contents of the final nodes in different rows of $\nu$.
	\\~\\
	Suppose now that $\eta\triangleleft\mu$, so that $\mu=\eta+\varepsilon_k$.
	
	If $k=j=i$, that is we are adding nodes to the same row to obtain $\mu$, $\nu$ and $\lambda$, then we have
	\begin{align*}
		\hat\mu+\rho(\delta)&=(\delta-|\eta|-1,\eta_1-1,\dots,\eta_i+1-i,\dots,\eta_m-m)\\
		\hat\lambda+\rho(\delta)&=(\eta_i-i,\eta_1-1,\dots,\delta+rp-|\eta|,\dots,\eta_m-m)
	\end{align*}
	Since $\hat\mu+\rho(\delta)\sim_p\hat\lambda+\rho(\delta)$ and $p>2$, we must have $\delta-|\eta|-1\equiv\eta_i-i$ (mod $p$). By a similar argument involving contents to above, this must be an equality, and implies that $\eta=\nu$. Hence $\mu=\lambda$.
		 
		 If $k=i$, $j\neq i$ then we have
	\begin{align*}
		\hat\mu+\rho(\delta)&=(\delta-|\eta|-1,\eta_1-1,\dots,\eta_i+1-i,\dots,\eta_m-m)\\
		\hat\lambda+\rho(\delta)&=(\eta_i-i,\eta_1-1,\dots,\eta_j+1-i,\dots,\delta+rp-1-|\eta|,\dots,\eta_m-m)
	\end{align*}
	and arguing as above this results in $\eta_i-i=\eta_j-j$, which is impossible.
	
	If now we suppose $k\neq i$, $j=i$, then
	\begin{align*}
		\hat\mu+\rho(\delta)&=(\delta-|\eta|-1,\eta_1-1,\dots,\eta_k+1-k,\dots,\eta_m-m)\\
		\hat\lambda+\rho(\delta)&=(\eta_i-i,\eta_1-1,\dots,,\delta+rp-|\eta|,\dots,\eta_m-m)
	\end{align*}
	so that $\eta_k-k+1=\delta+tp-|\eta|$ which is again impossible.
	
	Finally suppose $k\neq i$ and $j\neq i$. We have
	\begin{align*}
		\hat\mu+\rho(\delta)&=(\delta-|\eta|-1,\eta_1-1,\dots,\eta_k+1-k,\dots,\eta_m-m)\\
		\hat\lambda+\rho(\delta)&=(\eta_i-i,\eta_1-1,\dots,\eta_j+1-i,\dots,\delta+rp-1-|\eta|,\dots,\eta_m-m)
	\end{align*}
	and hence $\eta_k-k=\eta_j-j$, which is impossible unless $k=j$ and thus $\mu\hookrightarrow_{\delta+rp}\lambda$.
	\end{proof}
	
\begin{thm}\label{thm:r<pdeltain}
	Let $n<p$, $\delta\in\mathbb{F}_p$ and suppose $\mu\in\Lambda_{\leq n}$ is such that $\Delta_\mu^k(n;\delta)\neq L_\mu^k(n;\delta)$. Then there is a unique $r\in\mathbb{Z}$ such that $[\Delta_\mu^k(n;\delta):L_\lambda^k(n;\delta)]=[\Delta_\mu^K(n;\delta+rp):L_\lambda^K(n;\delta+rp)]$ for all $\lambda\in\Lambda_{\leq n}$. That is, $\Delta_\mu^k(n;\delta)$ has Loewy structure
	\begin{center}
		$L_{\mu}^k(n;\delta)$\\
		$L_{\lambda}^k(n;\delta)$
	\end{center}
	for a unique $\lambda$ such that $\mu\hookrightarrow_{\delta+rp}\lambda$.
\end{thm}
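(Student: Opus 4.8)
The plan is to transfer the known characteristic-zero structure for the parameter $\delta+rp$ to characteristic $p$, using that $\delta+rp\equiv\delta\pmod{P}$ and that $k\sym_m$ is semisimple for every $m\le n<p$. By hypothesis $\Delta^k_\mu(n;\delta)$ has a composition factor $L^k_\lambda\ne L^k_\mu$; localising $n-|\lambda|$ times (the functor $F_n$ is exact, sends $\Delta^k_\nu(n)$ to $\Delta^k_\nu(n-1)$ and $L^k_\nu(n)$ to $L^k_\nu(n-1)$ for $|\nu|<n$, and kills the modules labelled by partitions of $n$) I may assume $\lambda\vdash n$, so $\lambda$ is maximal in its block. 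Lemma~\ref{lem:r<pdeltain} forces $\mu\hookrightarrow_{\delta+rp}\lambda$ for a unique $r$, which I fix. Because $(\mu,\lambda)$ is a $(\delta+rp)$-pair, $\mu$ and $\lambda$ lie in a common characteristic-zero block, and since $\mu\subset\lambda$ differ by a single-row strip they are \emph{consecutive} in the chain of Theorem~\ref{thm:martinblocks} (non-consecutive members of a chain differ in more than one row); hence $\lambda$ is the successor of $\mu$ and $\Delta^K_\mu(n;\delta+rp)$ has exactly the two composition factors $L^K_\mu(n;\delta+rp)$ and $L^K_\lambda(n;\delta+rp)$, each once.

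Next I would carry out the transfer. The cell module $\Delta^R_\mu(n;\delta+rp)$ is an $R$-form specialising to $\Delta^K_\mu(n;\delta+rp)$ over $K$ and, as $\delta+rp\equiv\delta\pmod P$, to $\Delta^k_\mu(n;\delta)$ over $k$; writing $\overline{(\,\cdot\,)}$ for the decomposition map this gives $[\Delta^k_\mu(n;\delta)]=\overline{[L^K_\mu(n;\delta+rp)]}+\overline{[L^K_\lambda(n;\delta+rp)]}$ in the Grothendieck group. Since $\lambda\vdash n<p$ we have $L^K_\lambda=\Delta^K_\lambda$ and $L^k_\lambda=\Delta^k_\lambda$, both reductions of $\Delta^R_\lambda$, so $\overline{[L^K_\lambda(n;\delta+rp)]}=[L^k_\lambda(n;\delta)]$. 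Moreover the non-zero map $\Delta^K_\lambda\to\Delta^K_\mu$ is defined over $R$ and, by Lemma~\ref{lem:modred}, reduces to a non-zero — hence, as $\Delta^k_\lambda$ is simple, injective — map $\Delta^k_\lambda\to\Delta^k_\mu$ whose image is $L^k_\lambda$; its cokernel is then $R$-free, so $\Delta^k_\mu(n;\delta)/L^k_\lambda(n;\delta)$ is precisely the reduction of $L^K_\mu(n;\delta+rp)$. Thus the remaining content of the theorem is the single assertion that this reduction is simple, i.e. that $L^k_\lambda$ is the \emph{only} non-$\mu$ composition factor of $\Delta^k_\mu(n;\delta)$.

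The main obstacle is exactly this uniqueness. I would deduce it from Lemma~\ref{lem:r<pdeltain} together with the block description of Theorem~\ref{thm:pblocks}: any further factor $L^k_{\lambda'}$ satisfies $\mu\hookrightarrow_{\delta+r'p}\lambda'$ and lies in the $p$-block $\mathcal{O}^p_\mu(n;\delta)$ of $\mu$, so everything hinges on showing that for $n<p$ this $p$-block coincides with a single shifted characteristic-zero block, namely that $\mathcal{O}^p_\mu(n;\delta)$ meets $\Lambda_{\le n}$ in the one finite orbit $\mathcal{O}_\mu(n;\delta+rp)$. Granting this, $\lambda'$ lies in the same chain as $\mu$ and, forming a single-row $\delta$-pair with it, can only be the successor, whence $\lambda'=\lambda$ and $r'=r$; Proposition~\ref{prop:dw4.7} then yields multiplicity one, and comparison with the two-step characteristic-zero structure gives the stated equality of decomposition numbers for all $\lambda$ at once. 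This block coincidence is where $n<p$ is indispensable: all nodes of a partition of size $<p$ have content in $(-p,p)$, so reducing the $\beta_\delta$-sequences modulo $p$ loses no information beyond an overall shift of the $\varepsilon_0$-coordinate by a multiple of $p$ — precisely the freedom recorded by $r$ — and I would verify it through the marked-abacus analysis following Theorem~\ref{thm:pblocks}, checking that the only bead moves keeping one inside $\Lambda_{\le n}$ are the finite reflections for the parameter $\delta+rp$.
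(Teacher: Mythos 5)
Your overall architecture is sound and in part mirrors the paper: you reduce to $\lambda\vdash n$ by localisation, invoke Lemma~\ref{lem:r<pdeltain} to get a unique $r$ with $\mu\hookrightarrow_{\delta+rp}\lambda$, and correctly identify that everything comes down to showing no \emph{second} partition $\lambda'\neq\lambda,\mu$ can occur as a composition factor, after which Proposition~\ref{prop:dw4.7} gives multiplicity one and Theorem~\ref{thm:martinblocks} gives the match with characteristic zero. The gap is in how you dispose of that second factor. You route the uniqueness through the claim that $\mathcal{O}^p_\mu(n;\delta)\cap\Lambda_{\leq n}$ coincides with the single characteristic-zero orbit $\mathcal{O}_\mu(n;\delta+rp)$, and you justify this by saying that for $|\lambda|<p$ reduction of the $\beta_\delta$-sequences modulo $p$ ``loses no information beyond an overall shift of the $\varepsilon_0$-coordinate.'' That justification is not correct as stated: the entries $\mu_i-i$ of $\hat\mu+\rho(\delta)$ range over a window of width $2n-1$, which is $\geq p$ once $n>(p+1)/2$, so two distinct entries coming from partitions in $\Lambda_{\leq n}$ can be congruent modulo $p$ (for example, for $p=5$, $n=4$, $\mu=(4)$ the entries $3$ and $-2$ coincide mod $5$). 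Ruling out the spurious block links and spurious $\delta$-pairs that such accidental congruences could create is precisely the nontrivial combinatorial content of the theorem, and your proposal asserts it rather than proves it. (Your detour through blocks also obliges you to prove slightly more than necessary, namely that $\mu$ cannot simultaneously be the \emph{target} of $\delta$-pairs for two different lifts, in addition to not being the source of two.)

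The paper closes exactly this gap with a direct two-case argument that you would need to supply. If $\mu\hookrightarrow_{\delta+rp}\lambda$ and $\mu\hookrightarrow_{\delta+rp}\nu$ with the strips added in rows $i<j$, then $\nu_j-j=\delta+rp-|\mu|$ forces $\mu_i-i>\delta+rp-|\mu|$, so no strip added to row $i$ can terminate at content $\delta+rp-|\mu|$; and if $\mu\hookrightarrow_{\delta+rp}\lambda$, $\mu\hookrightarrow_{\delta+r'p}\nu$ with $r>r'$, the hook in $[\lambda]\cup[\nu]$ joining the last nodes of rows $i$ and $j$ has $(r-r')p+1$ nodes, of which $(r-r')p$ lie in $[\lambda]$, giving $|\lambda|\geq(r-r')p\geq p>n$, a contradiction. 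Either insert this argument in place of the block-coincidence claim (it makes the detour through $\mathcal{O}^p_\mu$ unnecessary), or, if you prefer your route, prove the block-coincidence statement honestly --- which requires essentially the same case analysis on contents and hook lengths, not just the observation that contents lie in $(-p,p)$.
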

	\begin{proof}
		Since $\Delta_\mu^k(n;\delta)\neq L_\mu^k(n;\delta)$ there is some $\lambda\neq\mu$ such that $[\Delta_\mu^k(n;\delta):L_\lambda^k(n;\delta)]\neq0$. By Lemma \ref{lem:r<pdeltain}, there exists a unique $r\in\mathbb{Z}$ such that $\mu\hookrightarrow_{\delta+rp}\lambda$.
		
		Suppose now there is another partition $\nu\neq\lambda,\mu$ such that $[\Delta_\mu^k(n;\delta):L_\lambda^k(n;\delta)]\neq0$. Again there is a unique $r'\in\mathbb{Z}$ such that $\mu\hookrightarrow_{\delta+r'p}\nu$. We will show that this leads to a contradiction.
		
		Consider first the case $r=r'$. Since both $\lambda$ and $\nu$ are obtained from $\mu$ by adding a single row of nodes, the final node having content $\delta+rp-|\mu|$, we immediately see that we cannot be adding nodes to the same row, otherwise $\lambda=\nu$. So suppose we add nodes to row $i$ to obtain $\lambda$ and to $j$ to obtain $\nu$, with $i<j$. Then $\nu_j-j=\delta+rp-|\mu|$, and since $\nu$ is a partition we must have $\nu_m-m=\mu_m-m>\delta+rp-|\mu|$ for all $m<j$. In particular $\mu_i-i>\delta+rp-|\mu|$, and so we cannot add nodes to this row to obtain $\lambda$.
		
		Suppose now that $r\neq r'$. Assume again that we are adding nodes to row $i$ to obtain $\lambda$, and to row $j$ to obtain $\nu$, with $i<j$. Therefore $\lambda_i-i=\delta+rp-|\mu|$ and $\nu_j-j=\delta+r'p-|\mu|$. Notice that
		\begin{align*}
			\delta+rp-|\mu|&=\lambda_i-i\\&>\mu_i-i\\&=\nu_i-i\\&>\nu_j-j\\&=\delta+r'p-|\mu|
		\end{align*}
		and hence $r>r'$.
		
		The hook in the Young diagram $[\lambda]\cup[\mu]$ with endpoints the last nodes of rows $i$ and $j$ contains $(r-r')p+1$ nodes. Since $\lambda$ and $\nu$ differ only in rows $i$ and $j$, the part of this hook lying inside $[\lambda]$ contains $(r-r')p$ nodes. Therefore $|\lambda|\geq(r-r')p$, which cannot happen if $n<p$.
		
		We have therefore shown that there cannot be two distinct partitions that appear as a composition factor of $\Delta_\mu^k(n;\delta)$ (other than $\mu$ itself). Thus we can apply Proposition \ref{prop:dw4.7} to see that \mbox{$[\Delta_\mu^k(n;\delta):L_\lambda^k(n;\delta)]=1$}, and the result follows.
	\end{proof}

\begin{rem}\label{rem:glue}
	Theorem \ref{thm:r<pdeltain} shows us that the decomposition matrix of $P_n^k(\delta)$ when $n<p$ and $\delta\in\mathbb{F}_p$ is obtained by ``putting together'' all of the characteristic zero decomposition matrices for each lift of $\delta$ to $K$.
\end{rem}

\subsection{Case $3$: $n\geq p$ and $\delta\in\mathbb{F}_p$}

In this case, the decomposition matrix of the partition algebra $P_n^k(\delta)$ is much more complicated. However there is still one sub-case when we can give a complete description.

\begin{lem}
	Let $n\geq p$ and $\delta\in\mathbb{F}_p$. Then there is only one lift of $\delta\in \mathbb{F}_p$ to $R$ such that the partition algebra $P_n^K(\delta)$ is non-semisimple if and only if $n=p$ and $\delta=p-1$.
\end{lem}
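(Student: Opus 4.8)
The plan is to first characterise, for an arbitrary lift, exactly when the characteristic-zero algebra $P_n^K(d)$ is non-semisimple, and then to count how many lifts of a fixed $\delta\in\mathbb{F}_p$ land in the resulting non-semisimple range.

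\textbf{Reduction to a range of parameters.} By Theorems \ref{thm:martinblocks} and \ref{thm:part0blocks}, every cell module lying in a singleton block is simple, so $P_n^K(d)$ is semisimple if and only if every block is a singleton; equivalently, $P_n^K(d)$ is non-semisimple precisely when there exist partitions $\mu\subset\lambda$ in $\Lambda_{\leq n}$ forming a $d$-pair $\mu\hookrightarrow_d\lambda$ in the sense of Definition \ref{def:deltapair}. Since the content condition $\lambda_i-i=d-|\mu|$ forces $d\in\mathbb{Z}$, no non-integer lift can create a non-trivial block, so every non-integer lift yields a semisimple algebra; it remains to decide, for integer $d$, when such a $d$-pair exists with $|\lambda|\leq n$.

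\textbf{Bounding the parameter.} Writing the added strip in row $i$, with final node $(i,\lambda_i)$ of content $\lambda_i-i=d-|\mu|$, I would read off $d=|\mu|+\lambda_i-i$. For the upper bound I would use $|\mu|\leq n-1$ (as $|\mu|<|\lambda|\leq n$) and $\lambda_i\leq\lambda_1\leq|\lambda|\leq n$ together with $i\geq1$, giving $d\leq 2n-2$; for the lower bound I would note that rows $1,\dots,i-1$ of $\lambda$, hence of $\mu$, are nonempty, so $|\mu|\geq i-1$, and with $\lambda_i\geq1$ this yields $d\geq 0$. Sharpness of both endpoints, and indeed of every intermediate integer, is witnessed by the explicit $d$-pairs $\big(\emptyset,(d+1)\big)$ for $0\leq d\leq n-1$ and $\big((d-n+1),(n)\big)$ for $n-1\leq d\leq 2n-2$. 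This establishes that $P_n^K(d)$ is non-semisimple if and only if $d\in\mathbb{Z}$ and $0\leq d\leq 2n-2$.

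\textbf{Counting the lifts.} Taking the representative $0\leq\delta\leq p-1$, the integer lifts of $\delta\in\mathbb{F}_p$ are exactly $\delta+rp$ for $r\in\mathbb{Z}$, and by the previous step the non-semisimple ones are those with $0\leq\delta+rp\leq 2n-2$. As $\delta\geq0$ forces $r\geq0$, the number of such lifts is $\lfloor(2n-2-\delta)/p\rfloor+1$, which is always at least $1$ because $\delta\leq p-1\leq n-1\leq 2n-2$ whenever $n\geq p$. This count equals $1$ exactly when $2n-2-\delta<p$, i.e. $2n-2<p+\delta\leq 2p-1$, forcing $n\leq p$; combined with the hypothesis $n\geq p$ this gives $n=p$, and then $2p-2<p+\delta$ forces $\delta\geq p-1$, hence $\delta=p-1$. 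Conversely, for $n=p$ and $\delta=p-1$ one has $2n-2-\delta=p-1<p$, so the count is exactly $1$, which proves the equivalence. The delicate point is the middle step: extracting the exact interval $[0,2n-2]$ purely from the $\delta$-pair combinatorics, in particular verifying that a $d$-pair supported on a deep row $i$ can never push $d$ outside this range and that every intermediate value is genuinely attained; once this is in place, the reduction and the counting are routine congruence bookkeeping.
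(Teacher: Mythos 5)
Your argument is correct and reaches the same intermediate statement as the paper — that for an integer parameter $d$ the algebra $P_n^K(d)$ is non-semisimple exactly when $0\leq d\leq 2n-2$ — followed by the same counting of lifts in that window. The difference is in how that window is obtained: the paper simply observes that negative $d$ admits no $\delta$-pairs and then cites the Halverson--Ram semisimplicity criterion for the upper end, whereas you extract both bounds directly from the $\delta$-pair combinatorics of Theorem \ref{thm:martinblocks}, via the identity $d=|\mu|+\lambda_i-i$ together with the explicit witnesses $(\emptyset,(d+1))$ and $((d-n+1),(n))$ showing every integer in $[0,2n-2]$ is attained. This makes your version self-contained (modulo the standard fact that a cellular algebra with all cell modules simple and pairwise non-isomorphic is semisimple, which you should perhaps flag), at the cost of a little more case-checking; your closed-form count $\lfloor(2n-2-\delta)/p\rfloor+1$ also replaces the paper's three-case analysis ($n=p$ with $\delta=p-1$; $n>p$; $\delta\neq p-1$) with a single inequality, which is arguably cleaner. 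Both routes are sound.
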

	\begin{proof}
		First notice that if $\delta<0$ then $P_n^K(\delta)$ is always semisimple, as we can never have a $\delta$-pair $\mu\hookrightarrow_\delta\lambda$. Combining this with \cite[Theorem 3.27]{halverson2005partition} we see that $P_r^K(\delta)$ is non-semisimple if and only if $0\leq\delta<2n-1$.
		
		Suppose that $n=p$ and $\delta=p-1$. The lifts of $\delta$ to $R$ are $p-1+rp$ with $r\in\mathbb{Z}$. Clearly there is only one value of $r$ such that $0\leq p-1+rp<2p-1$, namely $r=0$, and so there is only one lift of $\delta$ producing a non-semisimple algebra.
		
		Suppose now that $n>p$. Again, the lifts of $\delta$ to $K$ are $\delta+rp$, only now we have at least two values of $r$ such that $0\leq\delta+rp<2n-1$. Clearly $r=0$ satisfies this, but also since $\delta\leq p-1$ so too does $r=1$. Thus we have more than one lift of $\delta$ giving a non-semisimple algebra.
		
		Finally, suppose $\delta\neq p-1$. Once more, the lifts of $\delta$ are $\delta+rp$, and we want to satisfy the condition $0\leq\delta+rp<2n-1$. We have $r=0$ immediately, and since $\delta<p-1<n$ we can also choose $r=1$, so again there is more than one lift of resulting in a non-semisimple algebra.
	\end{proof}
	
Because of this result, we will henceforth restrict our attention to the the case $n=p$ and $\delta=p-1$. We continue by first calculating the decomposition numbers for the block $\mathcal{B}_\emptyset^k(p;p-1)$.

\begin{lem}\label{lem:principalblock}
	The block $\mathcal{B}_\emptyset^k(p;p-1)$ contains precisely all partitions with empty $p$-core.
\end{lem}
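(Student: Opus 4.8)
The plan is to work throughout with the marked-abacus description of the blocks. By Theorem \ref{thm:pblocks} and its reformulation in terms of the vectors $\Gamma_\delta(-,b)$, a partition $\lambda\in\Lambda_{\leq p}$ lies in $\mathcal{B}_\emptyset^k(p;p-1)$ if and only if $\Gamma_\delta(\lambda,b)=\Gamma_\delta(\emptyset,b)$ for some (equivalently, any) $b\geq p$; I would take $b$ to be a multiple of $p$ and set $m=b/p$. Since the non-marked part of $\beta_\delta(\lambda,b)$ is exactly James' sequence $\beta(\lambda,b)$, the beads of the marked abacus form the ordinary abacus of $\lambda$, while the symbol $\vee$ sits on runner $v_\lambda\equiv\delta-|\lambda|+b\pmod p$. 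For $\emptyset$ the beads occupy $0,1,\dots,b-1$, giving $m$ beads on every runner, and $v_\emptyset\equiv(p-1)+b\equiv -1\pmod p$; hence $\Gamma_\delta(\emptyset,b)=(m,\dots,m,m+1)$ with the $+1$ on runner $p-1$. The whole argument then reduces to deciding which $\lambda$ reproduce this vector.

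For the forward inclusion, suppose $\lambda$ has empty $p$-core. By Nakayama's rule (Theorem \ref{thm:nakayama}) this means $\Gamma(\lambda,b)=(m,\dots,m)$, and an empty $p$-core forces $|\lambda|=pw\equiv 0\pmod p$, so that $v_\lambda\equiv -1\pmod p$ and the marker again lands on runner $p-1$. Adding the marker gives $\Gamma_\delta(\lambda,b)=(m,\dots,m,m+1)=\Gamma_\delta(\emptyset,b)$, whence $\lambda\in\mathcal{B}_\emptyset^k(p;p-1)$.

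For the converse I would argue through the position of the marker, and this is where I expect the only real work to lie. Assume $\Gamma_\delta(\lambda,b)=\Gamma_\delta(\emptyset,b)$ and write $j=v_\lambda$ for the runner carrying the $\vee$. Recovering $\Gamma(\lambda,b)$ by subtracting the marker gives two cases: if $j=p-1$ then $\Gamma(\lambda,b)=(m,\dots,m)$ and $\lambda$ has empty $p$-core, as wanted; if $j\neq p-1$ then $\Gamma(\lambda,b)$ has $m-1$ beads on runner $j$, $m+1$ on runner $p-1$, and $m$ on every other runner. The crux is ruling out this second case, and it is precisely here that the hypothesis $n=p$ enters. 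I would compute the size of the $p$-core attached to such a vector directly, using
\[|\mathrm{core}|=\sum_{i=0}^{p-1}\Big(i\,\Gamma(\lambda,b)_i+p\binom{\Gamma(\lambda,b)_i}{2}\Big)-\binom{b}{2}.\]
Comparing with the empty case (where every entry is $m$), the quadratic terms change by $p\big(\binom{m-1}{2}+\binom{m+1}{2}-2\binom{m}{2}\big)=p$ and the linear terms change by $(p-1)-j$, so the core has size $2p-1-j\geq p+1$. But a $p$-core can never be larger than the partition itself, and $|\lambda|\leq p$; this contradiction eliminates the case $j\neq p-1$.

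Combining the two inclusions identifies $\mathcal{B}_\emptyset^k(p;p-1)$ with the empty-$p$-core partitions in $\Lambda_{\leq p}$, namely $\emptyset$ together with the $p$-weight-one partitions of $p$ (the hooks $(p-k,1^k)$ for $0\leq k\leq p-1$). This final size estimate also makes transparent why the statement is special to $n=p$: once $n>p$ the displaced vector with core size $2p-1-j$ can be realised by a genuine partition of size at most $n$, so such partitions enter the block and the clean description fails.
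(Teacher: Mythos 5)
Your proof is correct and follows essentially the same route as the paper: both reduce to comparing marked-abacus bead counts $\Gamma_\delta(\lambda,b)$ with $\Gamma_\delta(\emptyset,b)$, split on whether the marker sits on runner $p-1$, and kill the remaining case by showing the resulting $p$-core has size $2p-1-v_\lambda>p\geq|\lambda|$. The only (harmless) difference is that you work with a general multiple $b=mp$ and a closed formula for the core size, whereas the paper takes $b=p$ and writes the core out explicitly as $(p,1^{p-v_\lambda-1})$.
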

	\begin{proof}
		Using Theorem \ref{thm:pblocks} we look instead at the orbit $\mathcal{O}_\emptyset^p(p;p-1)$, and characterise the partitions therein. This is accomplished by constructing the marked abacus of $\emptyset$ using $p$ beads. The number of beads on each runner is given by $\Gamma(\emptyset,p)=(1,1,\dots,1,1)$. The runner $v_\emptyset$ is given by the $p$-congruence class of $\delta-|\emptyset|+p\equiv p-1$, so we therefore have $\Gamma_{p-1}(\emptyset,p)=(1,1,\dots,1,2)$. The block $\mathcal{B}^k_\emptyset(p;p-1)$ thus contains all partitions $\lambda$ with $\Gamma_{p-1}(\lambda,p)=(1,1,\dots,1,2)$.

		Let $\lambda$ be such a partition. If $v_\lambda=p-1$, then $\Gamma(\lambda,p)=(1,1,\dots,1,1)$ and so $\lambda$ has empty $p$-core. If $v_\lambda=m$ for some $0\leq m<p-1$, then 
		\[\Gamma(\lambda,p)=(1,\dots,1,\underbrace{0}_{\substack{(m+1)\text{-th}\\\text{place}}},1,\dots,1,2)\]
		Now let $\mu$ be the $p$-core of $\lambda$. Note that we must have $|\mu|\leq|\lambda|$. Since $\Gamma(\mu,p)=\Gamma(\lambda,p)$ and all beads are as high up their runners as possible, we can find $\mu$ explicitly. First we see that
			\[\beta(\mu,p)=(2p-1,p-1,p-2,\dots,m+1,m-1,m-2,\dots,2,1,0)\]
	and therefore
			\begin{align*}
				\mu&=\beta(\mu,p)+(1,2,\dots,p)-(p,p,\dots,p)\\
					&=(p,1,1,\dots,1,0,0,\dots,0)
			\end{align*}
		It is then clear that $|\mu|>p$. Since $|\mu|\leq|\lambda|$ we see that $|\lambda|>p$ and therefore $\lambda$ cannot label a $P_p^k(p-1)$ cell module.
		\\~\\
		Conversely, if $\lambda\vdash t\leq p$ and has empty $p$-core, then $|\lambda|=p$ or $0$, and
		\begin{align*}
			\Gamma_{p-1}(\lambda,p)&=(1,1,\dots,1,2)\\
			&=\Gamma_{p-1}(\emptyset,p)
		\end{align*}
Therefore $\lambda\in\mathcal{B}^k_\emptyset(p;p-1)$.
	\end{proof}
	
	Having determined which partitions lie in $\mathcal{B}_\emptyset^k(p;p-1)$, we will now determine the decomposition matrix of this block.
	
\begin{lem}\label{lem:empty}
	The composition series of $\Delta_\emptyset^k(p;p-1)$ is $0\subset\Delta_{(p)}^k(p;p-1)\subset\Delta_\emptyset^k(p;p-1)$.
\end{lem}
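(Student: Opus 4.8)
The plan is to show that $\Delta_\emptyset^k(p;p-1)$ has composition length exactly $2$, with factors $L_\emptyset^k(p;p-1)$ and $L_{(p)}^k(p;p-1)$ each occurring once. First observe that $(p)\vdash p$ is $p$-regular and that the cell module $\Delta_{(p)}^k(p;p-1)$ is the lift of the Specht module $S^{(p)}_k$, which is the one-dimensional trivial $k\sym_p$-module; hence $\Delta_{(p)}^k(p;p-1)=L_{(p)}^k(p;p-1)$ is simple. By Lemma~\ref{lem:principalblock} the trivial hook $(p)$ lies in $\mathcal{B}_\emptyset^k(p;p-1)$, and $\emptyset\hookrightarrow_{p-1}(p)$ because the final node of $(p)$ has content $p-1=\delta-|\emptyset|$. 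By cellularity $L_\emptyset^k(p;p-1)$ occurs exactly once, as the head of $\Delta_\emptyset^k(p;p-1)$. It therefore suffices to bound the length above by $2$ and to exhibit $L_{(p)}^k(p;p-1)$ as a genuine composition factor.

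For the upper bound I would restrict to $P_{p-\frac12}^k(p-1)$ and pass through the Morita equivalence. Since $\emptyset$ has no removable node, the restriction sequence \eqref{eq:partexact} collapses to an isomorphism $\mathrm{res}_p\Delta_\emptyset^k(p;p-1)\cong\Delta_\emptyset^k(p-\tfrac12;p-1)$. Applying Proposition~\ref{prop:morita}, which preserves composition length and sends cell modules to cell modules, identifies this with $\Delta_\emptyset^k(p-1;p-2)$. As $p-1<p$ and $p-2\in\mathbb{F}_p$, Theorem~\ref{thm:r<pdeltain} guarantees that every cell module of $P_{p-1}^k(p-2)$ has length at most $2$, so $\mathrm{res}_p\Delta_\emptyset^k(p;p-1)$ has length at most $2$. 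Because $\mathrm{res}_p$ is exact and merely forgets part of the action, it sends every nonzero simple to a module of length at least $1$; hence the length of $\Delta_\emptyset^k(p;p-1)$ is bounded above by the length of its restriction, namely by $2$.

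For the lower bound I would reduce the characteristic-zero homomorphism. In characteristic $0$ the block of $\emptyset$ in $P_p^K(p-1)$ is the chain $\emptyset\subset(p)$, which cannot be extended to $(p,1)$ inside $\Lambda_{\le p}$; so Theorem~\ref{thm:martinblocks} yields an embedding $\Delta_{(p)}^K(p;p-1)\hookrightarrow\Delta_\emptyset^K(p;p-1)$, that is, a nonzero element of $\mathrm{Hom}_K(\Delta_{(p)}^K,\Delta_\emptyset^K)$. Applying Lemma~\ref{lem:modred} with $X=\Delta_{(p)}^R$, $Y=\Delta_\emptyset^R$ and $M=0$ produces a submodule $N\subseteq\Delta_\emptyset^k$ and a nonzero map $\Delta_{(p)}^k\to\Delta_\emptyset^k/N$; since $\Delta_{(p)}^k=L_{(p)}^k$ is simple this map is injective, so $L_{(p)}^k$ is a composition factor of $\Delta_\emptyset^k/N$, hence of $\Delta_\emptyset^k(p;p-1)$. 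Combined with the upper bound and the fact that $L_\emptyset^k$ already accounts for one factor, this forces the length to equal $2$, with factors $L_\emptyset^k$ (the head) and $L_{(p)}^k=\Delta_{(p)}^k$. As these two simples are non-isomorphic, the unique proper nonzero submodule is isomorphic to $\Delta_{(p)}^k(p;p-1)$, giving the series $0\subset\Delta_{(p)}^k(p;p-1)\subset\Delta_\emptyset^k(p;p-1)$.

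The hard part is the upper bound: ruling out the $p$-regular hook simples $L_{(p-j,1^j)}^k$ with $1\le j\le p-2$ that also lie in $\mathcal{B}_\emptyset^k(p;p-1)$. These are labelled by partitions of size $p=n$, so they are annihilated by the localisation functor $F_p$ and cannot be detected there; the decisive idea is that restriction to $P_{p-\frac12}^k$ followed by the Morita equivalence transports the problem into the range $n<p$, where Theorem~\ref{thm:r<pdeltain} already controls every cell-module length. One must check carefully that restriction does not merge composition factors, so that the length inequality runs in the required direction; since $\mathrm{res}_p$ is simply restriction of scalars this is immediate.
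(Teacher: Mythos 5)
Your argument is correct, and it reaches the conclusion by a genuinely different route for the crucial step. Both you and the paper establish the lower bound identically: the $\delta$-pair $\emptyset\hookrightarrow_{p-1}(p)$ gives a nonzero homomorphism $\Delta_{(p)}^K(p;p-1)\to\Delta_\emptyset^K(p;p-1)$ via Theorem~\ref{thm:martinblocks}, which reduces modulo $p$ by Lemma~\ref{lem:modred} (with $M=0$, so in fact $N=0$), and the image of the resulting map is a copy of the simple one-dimensional module $\Delta_{(p)}^k=L_{(p)}^k$. Where you diverge is in ruling out further composition factors. The paper argues directly inside $\Delta_\emptyset^k(p;p-1)$: any submodule strictly between $\Delta_{(p)}^k$ and $\Delta_\emptyset^k$ would, by Lemma~\ref{lem:principalblock} and cellularity, be killed by every $p_{i,j}$, hence by Proposition~\ref{prop:dw4.6} would sit inside $\Psi(p,0)\otimes S^\emptyset_k$, which Proposition~\ref{prop:dw4.3} shows is one-dimensional since $M(p,0)$ is a single diagram. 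You instead bound the composition length by restricting to $P_{p-\half}^k(p-1)$ (where the restriction sequence \eqref{eq:partexact} degenerates because $\emptyset$ has no removable nodes) and transporting through the Morita equivalence of Proposition~\ref{prop:morita} to $\Delta_\emptyset^k(p-1;p-2)$, which falls under Theorem~\ref{thm:r<pdeltain}; since restriction of scalars along a unital subalgebra can only refine a composition series, the length of $\Delta_\emptyset^k(p;p-1)$ is at most $2$. Your approach buys a proof that bypasses the M\"obius-function machinery of $\Psi(n,t)$ entirely, at the cost of invoking the full strength of Theorem~\ref{thm:r<pdeltain} (itself proved by a restriction-and-Morita induction, so your argument is in a sense a one-step extension of that induction to $n=p$); the paper's argument is more self-contained at this point and, by exhibiting $\Psi(p,0)$ explicitly, shows concretely that the unique proper nonzero submodule is the span of the single M\"obius-alternating sum. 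Both are valid, and there is no circularity in your use of Theorem~\ref{thm:r<pdeltain} since it is established for all $n<p$ and $\delta\in\mathbb{F}_p$ with $\delta\not\equiv0$, and $p-2\not\equiv0$ as $p>2$.
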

	\begin{proof}
		Firstly, $\emptyset\hookrightarrow_{p-1}(p)$ since the partitions differ in one row only and the final node of this row of $(p)$ has content $p-1=\delta-|\emptyset|$. Thus there is a non-trivial homomorphism $\Delta_{(p)}^K(p;p-1)\longrightarrow\Delta_\emptyset^K(p;p-1)$, and so we must have $\mathrm{Hom}(\Delta_{(p)}^k(p;p-1),\Delta_\emptyset^k(p;p-1))\neq0$ by Lemma \ref{lem:modred}.
		
		We must now show that there is no module $N$ such that $\Delta_{(p)}^k(p;p-1)\subsetneq N\subsetneq\Delta_\emptyset^k(p;p-1)$. By Lemma \ref{lem:principalblock} and the cellularity of $P_p^k(p-1)$, any such module $N$ would be a symmetric group module. In particular, the action of any element $p_{i,j}$ on $N$ must be zero, and by Proposition \ref{prop:dw4.6} we see that $N\subseteq\Psi(p,0)\otimes S^\emptyset_k\cong\Psi(p,0)$. Now from Proposition \ref{prop:dw4.3} we have a basis for $\Psi(p,0)$ given by the set
		\[\left\{\sum_{x\in I(p,0)}\mu(y,x)x:y\in M(p,0)\right\}\]
		But $M(p,0)$ consists of only one element, namely the diagram with each node in its own block. Therefore the module $\Psi(p,0)$ is one-dimensional and is isomorphic to $\Delta_{(p)}^k(p;p-1)$. Thus there can be no module $N$ with $\Delta_{(p)}^k(p;p-1)\subsetneq N\subsetneq\Delta_\emptyset^k(p;p-1)$.
	\end{proof}
	
	From Lemma \ref{lem:principalblock} we have $(p)\in\mathcal{B}_\emptyset^k(p;p-1)$, and Lemma \ref{lem:empty} shows us that in fact\linebreak \mbox{$[\Delta^k_\emptyset(p;p-1):L_{(p)}^k(p;p-1)]=1$}. The remaining partitions in this block are all $p$-hook partitions, i.e. are of the form $(p-m,1^m)$ for some $1<m<p-1$, since these are the only partitions of $p$ with empty $p$-core. Because of this, the following result from Peel allows us to complete our description of the decomposition matrix of the block $\mathcal{B}_\emptyset^k(p;p-1)$:
	
\begin{thm}[{\cite[Theorem 1]{peel1971hook}}]\label{thm:peel}
	Let $\mathrm{char}\;k=p>2$. A composition series for $S_k^{(p-m,1^m)}$, $0<m<p-1$, is given by
	\[0\subset\mathrm{Im}\;\theta^{m-1}\subset S_k^{(p-m,1^m)}\]
	where $\theta^{m-1}:S_k^{(p-(m-1),1^{m-1})}\longrightarrow S_k^{(p-m,1^m)}$ is a non-trivial $k\sym_p$-homomorphism. Furthermore, $S_k^{(1^p)}\cong S_k^{(2,1^{p-2})}/\mathrm{Im}\;\theta^{p-3}$.
\end{thm}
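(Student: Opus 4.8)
The plan is to realise the hook Specht modules as exterior powers of the natural reflection module and to read off their structure from an associated Koszul complex. Write $M$ for the permutation module $k^p$ of $k\sym_p$ with basis $e_1,\dots,e_p$, let $W=S^{(p-1,1)}_k$ be the reduction modulo $p$ of the root lattice $\{\sum a_ie_i:\sum a_i=0\}$, and set $E=e_1+\dots+e_p$. Since $p=0$ in $k$ we have $E\in W$, and $E$ is fixed by $\sym_p$. The starting point is the classical integral isomorphism $\bigwedge^{m}W\cong S^{(p-m,1^m)}_k$ (both are free of rank $\binom{p-1}{m}$, the number of standard hook tableaux). Under this identification the $k\sym_p$-homomorphism $\theta^{m-1}=E\wedge(-)\colon\bigwedge^{m-1}W\to\bigwedge^{m}W$ is exactly the map $\theta^{m-1}\colon S^{(p-(m-1),1^{m-1})}_k\to S^{(p-m,1^m)}_k$ of the statement; it is non-zero since $E\neq0$.

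Next I would exploit that the cochain complex $(\bigwedge^{\bullet}W,\ E\wedge(-))$ is acyclic: extending $E$ to a basis of $W$ exhibits it as a Koszul complex on a single non-zero vector, so it has no cohomology. Acyclicity gives $\mathrm{Im}\,\theta^{m-1}=\ker\theta^{m}$ for every $m$, so each $S^{(p-m,1^m)}_k$ carries the two-step filtration $0\subset\mathrm{Im}\,\theta^{m-1}\subset S^{(p-m,1^m)}_k$ with quotient isomorphic to $\mathrm{Im}\,\theta^{m}$. Writing $r_m=\dim\mathrm{Im}\,\theta^{m}$, rank–nullity yields the recursion $\binom{p-1}{m}=r_{m-1}+r_m$ with $r_{-1}=0$, whence $r_m=\binom{p-2}{m}$. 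At the top of the complex $\theta^{p-2}$ is surjective, its target $\bigwedge^{p}W$ being zero, which is precisely the final assertion $S^{(1^p)}_k\cong S^{(2,1^{p-2})}_k/\mathrm{Im}\,\theta^{p-3}$, using $\bigwedge^{p-1}W=S^{(1^p)}_k$ and $\bigwedge^{p-2}W=S^{(2,1^{p-2})}_k$.

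The substance of the proof, and the step I expect to be the main obstacle, is to show that the two layers of this filtration are irreducible, so that the displayed filtration is genuinely a composition series. For $0\le m\le p-2$ the partition $(p-m,1^m)$ is $p$-regular, so $S^{(p-m,1^m)}_k$ has simple head $D^{(p-m,1^m)}_k$; as $\mathrm{Im}\,\theta^{m-1}$ is a proper submodule it lies in the radical, so the quotient $\mathrm{Im}\,\theta^{m}$ has simple head $D^{(p-m,1^m)}_k$. Dually, the self-duality $\bigwedge^{m}W\cong(\bigwedge^{p-1-m}W)^{*}\otimes\mathrm{sgn}$ of the complex (reflecting $(S^{\lambda}_k)^{*}\otimes\mathrm{sgn}\cong S^{\lambda'}_k$ for the conjugate hook) shows each $\bigwedge^{m}W$ has simple socle and interchanges $\mathrm{Im}\,\theta^{m}$ with $\mathrm{Im}\,\theta^{p-2-m}$. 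Combining the simple-head and simple-socle information with the dimension count $r_m=\binom{p-2}{m}$ would force each layer to be irreducible, identifying $\mathrm{Im}\,\theta^{m-1}\cong D^{(p-m+1,1^{m-1})}_k$ and the quotient with $D^{(p-m,1^m)}_k$.

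The delicate point is precisely this last irreducibility: the filtration and the dimensions come for free, but ruling out hidden composition factors requires genuine input. I would close it either by Peel's explicit analysis of the polytabloid generating $\mathrm{Im}\,\theta^{m-1}$, or, more conceptually, by observing that all the partitions $(p-m,1^m)$ have empty $p$-core and hence lie in a block of $k\sym_p$ with cyclic defect group of order $p$; such a block is a Brauer tree algebra, the tree here being a line with $p-1$ edges, from which the uniserial length-two structure of every hook Specht module is immediate.
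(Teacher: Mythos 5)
The paper does not actually prove this statement: it is quoted from Peel \cite[Theorem 1]{peel1971hook} and used only as an input to Corollary \ref{cor:peel}, so there is no internal argument to compare yours against. Judged on its own, the skeleton of your proposal is correct and gets the easy half of the theorem cleanly: $E=e_1+\dots+e_p$ lies in $W=S^{(p-1,1)}_k$ precisely because $\mathrm{char}\,k=p$, the identification $\bigwedge^mW\cong S^{(p-m,1^m)}_k$ is the standard integral fact, the Koszul complex $(\bigwedge^\bullet W,\,E\wedge-)$ is exact, and this yields the chain $0\subset\mathrm{Im}\,\theta^{m-1}\subset S^{(p-m,1^m)}_k$ with $\dim\mathrm{Im}\,\theta^m=\binom{p-2}{m}$ and the final isomorphism $S^{(1^p)}_k\cong S^{(2,1^{p-2})}_k/\mathrm{Im}\,\theta^{p-3}$ (modulo an index slip: it is $\theta^{p-1}$, not $\theta^{p-2}$, whose target is $\bigwedge^pW=0$; surjectivity of $\theta^{p-2}$ follows from exactness at $\bigwedge^{p-1}W$).

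The gap is exactly where you locate it, but your proposed repair does not close it. Simple head (from $p$-regularity and cellularity), simple socle (from the wedge-pairing duality), and the dimension count $r_m=\binom{p-2}{m}$ together do \emph{not} force each layer $\mathrm{Im}\,\theta^m$ to be irreducible: a uniserial module of length three, or any module with composition factors hidden strictly between socle and head, satisfies all three constraints. Ruling these out requires genuine new input --- a lower bound on $\dim D^{(p-m,1^m)}_k$, or vanishing of the relevant $\mathrm{Hom}$ and $\mathrm{Ext}$ groups between hook Specht modules --- and none is supplied; the na\"ive dimension count $\sum_m\binom{p-1}{m}=2^{p-1}$ only bounds $\sum_m\dim D^{(p-m,1^m)}_k$ from above, which is the wrong direction. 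Your two fallbacks concede the point: ``Peel's explicit analysis'' is the theorem being proved, and the Brauer-tree argument presupposes that the tree of the principal block of $k\sym_p$ is a line with the hooks at its vertices, which is essentially equivalent to the decomposition numbers in question (it can be derived from cyclic-defect theory plus unitriangularity, but that derivation must be carried out, not asserted). What you have proved is that the displayed chain is a filtration with the stated subquotient dimensions, together with the last isomorphism; that it is a \emph{composition} series remains open in your write-up.
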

\begin{cor}\label{cor:peel}
Let $\mathrm{char}\;k=p>2$. For $0<m<p-1$ we have
	\[[\Delta_{(p-m,1^m)}^k(p;p-1):L_\lambda^k(p;p-1)]=\begin{cases}
													1&\text{if }\lambda=(p-m,1^m)\text{ or }(p-m+1,1^{m-1})\\
													0&\text{otherwise}
												\end{cases}\]
	For $m=0$ we have $\Delta_{(p)}^k(p;p-1)\cong L_{(p)}^k(p;p-1)$.\\
	For $m=p-1$ we have $\Delta_{(1^p)}^k(p;p-1)\cong L_{(2,1^{p-2})}^k(p;p-1)$.
\end{cor}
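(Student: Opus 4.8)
The plan is to exploit the fact that every partition $(p-m,1^m)$ in this block has size exactly $p=n$, so that the associated cell modules are simply Specht modules lifted to the partition algebra, and the decomposition numbers reduce to the modular decomposition numbers of $\sym_p$ via Theorem \ref{thm:partsymblocks}. First I would observe that for $\mu=(p-m,1^m)$ we have $|\mu|=p$, so by the cellularity of $P_p^k(p-1)$ every composition factor $L_\lambda^k(p;p-1)$ of $\Delta_\mu^k(p;p-1)$ satisfies $|\lambda|\geq|\mu|=p$; since also $\lambda\in\Lambda_{\leq p}$, this forces $|\lambda|=p$. Hence all composition factors are indexed by $p$-regular partitions of $p$, and for each such $\lambda$ Theorem \ref{thm:partsymblocks} (with $t=0$) gives
\[[\Delta_{(p-m,1^m)}^k(p;p-1):L_\lambda^k(p;p-1)]=[S_k^{(p-m,1^m)}:D_k^\lambda].\]
It therefore suffices to compute these symmetric group decomposition numbers.

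Next I would invoke Peel's theorem (Theorem \ref{thm:peel}). For $0<m<p-1$ it tells us that $S_k^{(p-m,1^m)}$ has composition length two, with series $0\subset\mathrm{Im}\,\theta^{m-1}\subset S_k^{(p-m,1^m)}$. The top factor $S_k^{(p-m,1^m)}/\mathrm{Im}\,\theta^{m-1}$ is the head of the Specht module, namely $D_k^{(p-m,1^m)}$. For the bottom factor I would argue that $\mathrm{Im}\,\theta^{m-1}$ is a non-zero homomorphic image of $S_k^{(p-m+1,1^{m-1})}$, so its head is a quotient of $D_k^{(p-m+1,1^{m-1})}$; since $\mathrm{Im}\,\theta^{m-1}$ is itself a single, hence simple, composition factor, it must equal $D_k^{(p-m+1,1^{m-1})}$. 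A routine check shows that both $(p-m,1^m)$ and $(p-m+1,1^{m-1})$ are $p$-regular when $0<m<p-1$, so these labels are legitimate, and each factor occurs with multiplicity one. Substituting into the displayed equation yields the stated decomposition numbers. The main obstacle is precisely this identification of the two composition factors of the hook Specht module with the correct simple modules $D_k^\lambda$; once the socle is pinned down as $D_k^{(p-m+1,1^{m-1})}$, the rest is bookkeeping.

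Finally I would treat the two boundary values separately. For $m=0$ the module $S_k^{(p)}$ is the one-dimensional trivial module $D_k^{(p)}$, which is simple, so $\Delta_{(p)}^k(p;p-1)\cong L_{(p)}^k(p;p-1)$, consistent with Lemma \ref{lem:empty}. For $m=p-1$ the partition $(1^p)$ is $p$-singular and so does not label a simple module; here I would use the last clause of Peel's theorem, $S_k^{(1^p)}\cong S_k^{(2,1^{p-2})}/\mathrm{Im}\,\theta^{p-3}$. The right-hand side is the head of $S_k^{(2,1^{p-2})}$, that is $D_k^{(2,1^{p-2})}$, so $S_k^{(1^p)}$ is simple and isomorphic to $D_k^{(2,1^{p-2})}$. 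Applying Theorem \ref{thm:partsymblocks} once more, $\Delta_{(1^p)}^k(p;p-1)$ has the single composition factor $L_{(2,1^{p-2})}^k(p;p-1)$ and is therefore isomorphic to it, completing the description of the block.
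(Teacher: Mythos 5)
Your proposal is correct and follows essentially the same route as the paper: reduce to the case $|\lambda|=p$ by cellularity, apply Theorem \ref{thm:partsymblocks} with $t=0$ to convert the problem into computing $[S_k^{(p-m,1^m)}:D_k^\lambda]$, and then read these off from Peel's theorem, treating $m=0$ and $m=p-1$ separately. The paper's proof is just a one-line citation of these two results, so your write-up merely supplies the bookkeeping (identification of $\mathrm{Im}\,\theta^{m-1}$ with $D_k^{(p-m+1,1^{m-1})}$ and the $p$-regularity checks) that the paper leaves implicit.
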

\begin{proof}
	We apply Theorem \ref{thm:partsymblocks} to Theorem \ref{thm:peel}. For $m=0$ we use the fact that $P_p^k(p-1)$ is a cellular algebra.
\end{proof}
We now turn our attention to the other blocks of $P_p^k(p-1)$. The partitions here must have non-empty $p$-core, and since all partitions have size at most $p$, then they are themselves all $p$-cores.
\begin{lem}\label{lem:sizes}
	Let $\lambda\in\Lambda_{\leq p}$ be a partition with non-empty $p$-core. If there exists $\mu\in\mathcal{B}_\lambda^k(p;p-1)\backslash\{\lambda\}$, then $|\lambda|\neq|\mu|$.
\end{lem}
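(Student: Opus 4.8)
The plan is to argue by contradiction: assume there is some $\mu\in\mathcal{B}_\lambda^k(p;p-1)\backslash\{\lambda\}$ with $|\mu|=|\lambda|$, and deduce that in fact $\mu=\lambda$. The strategy is to translate membership in a common block into a statement purely about $p$-cores, using the marked abacus reformulation of the block classification furnished by Theorem \ref{thm:pblocks}, and then to exploit the elementary fact that a partition of size at most $p$ with non-empty $p$-core must coincide with its own $p$-core.

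First I would fix some $b\geq p$ and compare the $\delta$-marked abaci of $\lambda$ and $\mu$. By the marked abacus characterisation of the blocks of $P_p^k(p-1)$, the partitions $\mu$ and $\lambda$ lie in the same block exactly when $\Gamma_{p-1}(\mu,b)=\Gamma_{p-1}(\lambda,b)$. The key observation is that the marker runner $v_\lambda$ is governed solely by the residue of $\delta-|\lambda|+b$ modulo $p$; since we are assuming $|\lambda|=|\mu|$, we obtain $v_\lambda=v_\mu$, so the extra bead contributed by the $\vee$ sits on the same runner in both diagrams. Cancelling this common contribution, the marked condition $\Gamma_{p-1}(\mu,b)=\Gamma_{p-1}(\lambda,b)$ collapses to the unmarked condition $\Gamma(\mu,b)=\Gamma(\lambda,b)$, which by Nakayama's Conjecture (Theorem \ref{thm:nakayama}) says precisely that $\lambda$ and $\mu$ have the same $p$-core.

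It then remains to run the size argument. Since $|\lambda|\leq p$ and $\lambda$ has non-empty $p$-core, $\lambda$ must equal its own $p$-core: if $|\lambda|<p$ no $p$-hook can be removed at all, while if $|\lambda|=p$ the only alternative to being a $p$-core is that stripping a single $p$-hook reduces $\lambda$ to $\emptyset$, which is excluded by the non-emptiness hypothesis. Now $\mu$ shares its $p$-core with $\lambda$, namely $\lambda$ itself, which has size $|\lambda|=|\mu|$; since passing to the $p$-core removes $p$-hooks and can only decrease the size (by positive multiples of $p$), equality of sizes forces $\mu$ to equal its own $p$-core $=\lambda$. This contradicts $\mu\neq\lambda$ and completes the proof.

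I expect the only genuinely delicate point to be the bookkeeping in the second paragraph: after padding $\hat\lambda$ and $\hat\mu$ to sequences of equal length and comparing the associated multisets modulo $p$ as in \eqref{eqn:orbitssequence}, one must verify that the single residue coming from the first coordinate $\delta-|\lambda|+b$ (equivalently, the $\vee$-marker) is the \emph{only} discrepancy between the marked and unmarked runner counts, so that cancelling it is legitimate and leaves exactly the symmetric-group block condition of Theorem \ref{thm:nakayama}. Once this reduction to $p$-cores is secured, everything else is routine.
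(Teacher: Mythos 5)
Your proof is correct and follows essentially the same route as the paper: both pass from the marked-abacus block condition to equality of unmarked runner counts via $v_\lambda=v_\mu$ (which holds because $|\lambda|=|\mu|$), deduce that $\lambda$ and $\mu$ share a $p$-core, and then use $|\lambda|,|\mu|\leq p$ together with the non-empty $p$-core hypothesis to force $\mu=\lambda$. Your version merely spells out the final size bookkeeping in slightly more detail than the paper does.
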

	\begin{proof}
		Choose a partition $\mu\in\mathcal{B}_\lambda^k(p;p-1)$ with $|\lambda|=|\mu|$. By the characterisation of the blocks of the partition algebra we have $\Gamma_{p-1}(\lambda,p)=\Gamma_{p-1}(\mu,p)$. As $|\lambda|=|\mu|$ we have $v_\lambda=v_\mu$, hence $\Gamma(\lambda,p)=\Gamma(\mu,p)$ and they have the same $p$-core. However since $|\lambda|\leq p$ and has non-empty $p$-core, it must in fact be that $p$-core. Since we also have $|\mu|\leq p$, it follows that $\mu=\lambda$.
	\end{proof}
	
\begin{thm}\label{thm:r=p}
	Let $\lambda\in\Lambda_{\leq p}$ be a partition with non-empty $p$-core. Then the block $\mathcal{B}_\lambda^k(p;p-1)$ has the same decomposition matrix as $\mathcal{B}_\lambda^K(p;p-1)$.
\end{thm}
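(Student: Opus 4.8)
The plan is to show that $\mathcal{B}^k_\lambda(p;p-1)$ and $\mathcal{B}^K_\lambda(p;p-1)$ are labelled by the same partitions and that each cell module has the same composition factors over $k$ as over $K$. First I would record the shape of the block. By Lemma~\ref{lem:sizes} its partitions have pairwise distinct sizes, and since each has size at most $p$ with non-empty $p$-core, each is its own $p$-core. The only $p$-singular partition of size at most $p$ is $(1^p)$, whose $p$-core is empty, so every partition in the block is $p$-regular and labels a simple module over both $K$ and $k$. Ordering by size gives a chain $\mu^{(0)}\subset\dots\subset\mu^{(r)}$; by maximality of $\mu^{(r)}$ under dominance the top cell module is simple, $\Delta^k_{\mu^{(r)}}(p)=L^k_{\mu^{(r)}}(p)$ and likewise over $K$, and since the sizes strictly increase we have $|\mu^{(i)}|<p$ for every $i<r$.

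Next I would identify the two blocks. The inclusion $\mathcal{B}^K_\lambda(p;p-1)\subseteq\mathcal{B}^k_\lambda(p;p-1)$ is automatic, as the finite orbit lies inside the affine orbit. For the reverse, any partition in the $k$-block but not the $K$-block would be joined to the rest of the block through a $\delta$-pair for some parameter $\delta+rp$ with $r\neq0$; but by the Lemma preceding Lemma~\ref{lem:principalblock} every lift $\delta+rp$ with $r\neq0$ gives a semisimple algebra $P^K_p(\delta+rp)$, which by Theorem~\ref{thm:martinblocks} admits no $\delta$-pairs between partitions of size at most $p$. Hence $\mathcal{B}^k_\lambda(p;p-1)=\mathcal{B}^K_\lambda(p;p-1)$ is exactly the Martin chain of Theorem~\ref{thm:martinblocks}, whose characteristic-zero decomposition matrix is bidiagonal.

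I would then pin down the decomposition numbers. For a lower bound, each $\delta$-pair $\mu^{(i)}\hookrightarrow_{p-1}\mu^{(i+1)}$ gives a non-zero homomorphism $\Delta^K_{\mu^{(i+1)}}(p)\to\Delta^K_{\mu^{(i)}}(p)$, which by Lemma~\ref{lem:modred} forces $[\Delta^k_{\mu^{(i)}}(p):L^k_{\mu^{(i+1)}}(p)]\geq1$. For the factors of size less than $p$ I would apply the localisation functor $F_p$: it is exact, annihilates precisely the simples of size $p$, and sends $\Delta^k_{\mu^{(i)}}(p)\mapsto\Delta^k_{\mu^{(i)}}(p-1)$ and $L^k_\nu(p)\mapsto L^k_\nu(p-1)$ whenever $|\nu|<p$, so it preserves all decomposition numbers between partitions of size $<p$. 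As $p-1<p$, Theorem~\ref{thm:r<pdeltain} applies in $P^k_{p-1}(p-1)$ and, together with the same localisation performed over $K$, identifies these numbers with their characteristic-zero values.

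What remains is to control the single factor of size $p$, namely $L^k_{\mu^{(r)}}(p)$ when $|\mu^{(r)}|=p$, which $F_p$ does not detect; this is the step I expect to be the main obstacle. The natural route is a $\delta$-pair criterion: I would prove that $[\Delta^k_\mu(p;p-1):L^k_\nu(p;p-1)]\neq0$ with $\nu\neq\mu$ forces $\mu\hookrightarrow_{p-1}\nu$, the analogue of Lemma~\ref{lem:r<pdeltain} for $n=p$. Because the Specht modules in play are those of $p$-cores and hence stay simple even in size $p$, the restriction-induction argument of Lemma~\ref{lem:r<pdeltain} should carry over, with the uniqueness of the non-semisimple lift forcing the pairing parameter to be $r=0$. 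Granting this, $L^k_{\mu^{(r)}}(p)$ can occur only in $\Delta^k_{\mu^{(r-1)}}(p)$, since for $i<r-1$ the partitions $\mu^{(i)}$ and $\mu^{(r)}$ differ in more than one row and so form no $\delta$-pair; and as $|\mu^{(r-1)}|<p$ with $\mu^{(r)}$ the only non-trivial factor present, Proposition~\ref{prop:dw4.7} gives multiplicity exactly $1$. Assembling these facts, the $k$-decomposition matrix of the block is bidiagonal and coincides with the $K$-decomposition matrix.
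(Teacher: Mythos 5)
Your proposal is correct and follows essentially the same route as the paper: reduce the size-$<p$ part of the block to $P_{p-1}^k(p-1)$ via localisation and Theorem \ref{thm:r<pdeltain} (with the uniqueness of the non-semisimple lift forcing $r=0$), and control the single size-$p$ simple by restricting it as a simple Specht module, running the $\delta$-pair argument of Lemma \ref{lem:r<pdeltain}, and invoking Proposition \ref{prop:dw4.7} for the multiplicity. The only cosmetic difference is that you assert the set-theoretic equality $\mathcal{B}^k_\lambda=\mathcal{B}^K_\lambda$ up front using a linkage-by-$\delta$-pairs claim that is really a consequence of the decomposition-number computation you carry out afterwards; the paper avoids this by letting the block equality fall out of the computed matrix, and your argument should be reordered accordingly.
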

	\begin{proof}
		By Lemma \ref{lem:sizes} we can relabel
			\[\mathcal{B}_\lambda^k(p;p-1)=\{\lambda^{(m)},\lambda^{(m-1)},\dots,\lambda^{(1)}\}\]
		where $|\lambda^{(i)}|>|\lambda^{(i-1)}|$ for $1<i\leq m$.
		
		Suppose $|\lambda^{(m)}|\neq p$. Then every partition in the block has size strictly less than $p$, and so labels a cell module for $P_{p-1}^k(p-1)$. Since the partition algebras form a tower of recollement, the decomposition matrix of the block $\mathcal{B}_\lambda^k(p;p-1)$ is the same as that of $\mathcal{B}_{\lambda}^k(p-1;p-1)$. We can therefore use the results of Theorem \ref{thm:r<pdeltain} to conclude that the decomposition numbers $[\Delta_{\lambda^{(i)}}^k(p-1;p-1):L_{\lambda^{(j)}}^k(p-1;p-1)]$ are either 0 or 1, and the latter occurs if and only if $\lambda^{(i)}\hookrightarrow_{p-1+rp}\lambda^{(j)}$ for some $r\in\mathbb{Z}$. But since $\delta=p-1$ is the only lift of $\delta$ to $K$ that gives a non-semisimple $K$-algebra, we must have $\lambda^{(i)}\hookrightarrow_{p-1}\lambda^{(j)}$. Therefore we have
			\begin{align*}
				[\Delta_{\lambda^{(i)}}^k(p;p-1):L_{\lambda^{(j)}}^k(p;p-1)]&= [\Delta_{\lambda^{(i)}}^k(p-1;p-1):L_{\lambda^{(j)}}^k(p-1;p-1)]\\
				&=[\Delta_{\lambda^{(i)}}^K(p-1;p-1):L_{\lambda^{(j)}}^K(p-1;p-1)]\\
				&=[\Delta_{\lambda^{(i)}}^K(p;p-1):L_{\lambda^{(j)}}^K(p;p-1)]
			\end{align*}
			
		Suppose now that $|\lambda^{(m)}|=p$. Then the partitions $\lambda^{(m-1)},\lambda^{(m-2)},\dots,\lambda^{(1)}$ are all of size strictly less than $p$, and therefore label cell modules for $P_{p-1}^k(p-1)$. By the same argument as above, the decomposition matrix obtained by removing the row and column labelled by $\lambda^{(m)}$ is the same as that of $\mathcal{B}_{\lambda^{(1)}}^k(p-1;p-1)$, which is the same as in characteristic zero.
		
		It remains to show that the decomposition numbers $[\Delta_{\lambda^{(i)}}^k(p;p-1):L_{\lambda^{(m)}}^k(p;p-1)]$ are the same as in characteristic zero. We begin by showing that $[\Delta_{\lambda^{(i)}}^k(p;p-1):L_{\lambda^{(m)}}^k(p;p-1)]=0$ for $i<m-1$. Since $\lambda^{(m)}$ is the only partition of size $p$ in its block, the simple module $L_{\lambda^{(m)}}^k(p;p-1)$ is a Specht module. Therefore after applying the restriction functor we have the following filtration:
		\[\mathrm{res}_pL_{\lambda^{(m)}}^k(p;p-1)\cong\biguplus_{\nu\triangleleft\lambda^{(m)}}\Delta_{\nu}^k(p-\textstyle\frac{1}{2};p-1)\]
Therefore we can apply the same argument as in Theorem \ref{thm:r<pdeltain} and see that if\linebreak \mbox{$[\Delta_{\lambda^{(i)}}^k(p;p-1):L_{\lambda^{(m)}}^k(p;p-1)]\neq0$}, then either $\lambda^{(i)}=\lambda^{(m)}$ or $\lambda^{(i)}\hookrightarrow_{p-1}\lambda^{(m)}$. Following the proof of Theorem \ref{thm:r<pdeltain} we must then have
\[[\Delta_{\lambda^{(i)}}^k(p;p-1):L_{\lambda^{(m)}}^k(p;p-1)]=\begin{cases}
												1&\text{if }i=m-1,m\\
												0&\text{otherwise}
												\end{cases}\]
and hence $[\Delta_{\lambda^{(i)}}^k(p;p-1):L_{\lambda^{(m)}}^k(p;p-1)]=[\Delta_{\lambda^{(i)}}^K(p;p-1):L_{\lambda^{(m)}}^K(p;p-1)]$ by Theorem \ref{thm:martinblocks}.
\end{proof}

\begin{rem}\label{rem:product}
	If we denote again by $\mathbf{S}$ the block decomposition matrix of the symmetric group algebras over $k$ (see \eqref{eq:blockdecomp}), then we can combine Lemma \ref{lem:empty}, Corollary \ref{cor:peel} and Theorem \ref{thm:r=p} and say that the decomposition matrix $\mathbf{D}(P^k_p(p-1))$ is equal to the product $\mathbf{D}(P^K_p(p-1))\,\mathbf{S}$. In fact, we can compute $\mathbf{S}$ explicitly in this case using Corollary \ref{cor:peel}.
\end{rem}

Unfortunately without the restrictions imposed thus far, we encounter examples of partition algebras whose decomposition matrices are not obtained from the methods summarised in Remarks \ref{rem:glue} or \ref{rem:product}. One such is detailed below.

\begin{example}
	We will show the decomposition  matrix of $P^k_4(1)$ with $\mathrm{char}\;k=3$ cannot be computed as in Remarks \ref{rem:glue} or \ref{rem:product}. We present below the decomposition matrix of $k\sym_4$.
		\[\bordermatrix{&D_k^{(4)}&D_k^{(3,1)}&D_k^{(2^2)}&D_k^{(2,1^2)}\cr
					  S^{(4)}_k   		&1&0&0&0\cr
					  S^{(3,1)}_k  		&0&1&0&0\cr
					  S^{(2^2)}_k  		&1&0&1&0\cr
					  S_k^{(2,1^2)}		&0&0&0&1\cr
					  S_k^{(1^4)}  		&0&0&1&0\cr}\]

	 We will first show that there exist non-zero decomposition numbers $[\Delta_\mu^k(4;1):L_\lambda^k(4;1)]$ for which there is no $r\in\mathbb{Z}$ such that $\mu\hookrightarrow_{1+3r}\lambda$, thus not following Remark \ref{rem:glue}. Indeed, examination of the decomposition matrix of $k\sym_4$ combined with Theorem \ref{thm:partsymblocks} shows us that $\Delta_{(2^2)}^k(4;1)$ has a submodule isomorphic to $L_{(4)}^k(4;1)$. Therefore $[\Delta_{(2^2)}^k(4;1):L_{(4)}^k(4;1)]\neq0$, but $(2^2)\not\subset(4)$ and so there cannot exist an integer $r$ with $(2^2)\hookrightarrow_{\delta+rp}(4)$.
	 
	We will now show that the decomposition matrix of $P^k_4(1)$ is not equal to the product of the decomposition matrices $\mathbf{D}(P^K_4(1+3r))\,\mathbf{S}$, for any $r\in\mathbb{Z}$. The semisimplicity criterion of \linebreak\cite[Theorem 3.27]{halverson2005partition} shows us that we must consider $r=0,1$.
		
	Consider first the case $r=0$, that is $P_4^K(1)$. We let $\lambda=(2,1^2)$ and $\mu=(2,1)$, then we have $\delta-|\mu|=-2$. Note that these partitions differ by a single node of content $-2$ in the third row, and therefore form a $1$-pair. By Theorem \ref{thm:martinblocks} we thus have $[\Delta_{(2,1)}^K(4;1):L_{(2,1^2)}^K(4;1)]=1$, and so by Lemma \ref{lem:modred}
	\begin{equation}\Delta_{(2,1)}^k(4;1):L_{(2,1^2)}^k(4;1)]\neq0\label{eq:partlift1}\end{equation}
	
	Now consider the case $r=1$, i.e. $P_4^K(4)$. Let $\lambda=(4)$ and $\mu=(1)$, then we have $\delta-|\mu|=3$. These partitions differ by a strip of nodes in the first row, the last of which has content $3$, and therefore form a $4$-pair. By Theorem \ref{thm:martinblocks} we see that $[\Delta_{(1)}^K(4;4):L_{(4)}^K(4;4)]=1$, and so by Lemma \ref{lem:modred}
		\begin{equation}[\Delta_{(1)}^k(4;1):L_{(4)}^k(4;1)]\neq0\label{eq:partlift4}\end{equation}

	 If the decomposition matrix $\mathbf{D}(P^k_4(1))$ was equal to the product $\mathbf{D}(P^K_4(1+3r))\,\mathbf{S}$ for some $r\in\mathbb{Z}$, we would have the following expansion for every $\mu\in\Lambda_{\leq4}$, $\lambda\in\Lambda^\ast_{\leq4}$:
	 \[[\Delta^k_\mu(4;1):L^k_\lambda(4;1)]=\sum_{\nu\in\Lambda_{\leq4}}[\Delta^K_\mu(4;1+3r):L^K_\nu(4;1+3r)][S^\nu_k:D^\lambda_k]\]
	 First let $r=0$, $\lambda=(4)$ and $\mu=(1)$. By examining the decomposition matrix of $k\sym_4$, we see that the only partitions $\nu$ for which $[S^\nu_k:D^{(4)}_k]\neq0$ are $\nu=(4)$ and $\nu=(2^2)$. The above factorisation then becomes
	 \begin{align*}
	 [\Delta^k_{(1)}(4;1):L^k_{(4)}(4;1)]&=[\Delta^K_{(1)}(4;1):L^K_{(4)}(4;1)][S^{(4)}_k:D^{(4)}_k]\\
	 &~~~~~+[\Delta^K_{(1)}(4;1):L^K_{(2^2)}(4;1)][S^{(2^2)}_k:D^{(4)}_k]\\
	 &=[\Delta^K_{(1)}(4;1):L^K_{(4)}(4;1)]+[\Delta^K_{(1)}(4;1):L^K_{(2^2)}(4;1)]
	 \end{align*}
	 From Theorem \ref{thm:martinblocks} we know that all non-decomposition numbers in characteristic zero correspond to \linebreak$\delta$-pairs. However neither $(4)$ and $(1)$ nor $(2^2)$ and $(1)$ are $1$-pairs, and therefore both these decomposition numbers are zero. This contradicts \eqref{eq:partlift4}, and the factorisation must in fact not be valid for $r=0$.
	 
	 Now let $r=1$, $\lambda=(2,1^2)$ and $\mu=(2,1)$. Again by examining the decomposition matrix of $k\sym_4$, we see that the only partition $\nu$ for which $[S^\nu_k:D^{(2,1^2)}_k]\neq0$ is $\nu=(2,1^2)$. The factorisation then becomes
	 \begin{align*}
	 [\Delta^k_{(2,1)}(4;1):L^k_{(2,1^2)}(4;1)]&=[\Delta^K_{(2,1)}(4;4):L^K_{(2,1^2)}(4;4)][S^{(2,1^2)}_k:D^{(2,1^2)}_k]\\
	 \\
	 &=[\Delta^K_{(2,1)}(4;4):L^K_{(2,1^2)}(4;4)]
	 \end{align*}
	Again we see that $(2,1^2)$ and $(2,1)$ is not a $4$-pair, and therefore this decomposition number is zero. This contradicts \eqref{eq:partlift1}, and the factorisation is not valid for $r=1$.
	
	Since $\delta=1$ and $\delta=4$ are the only values of $\delta$ such that $P_4^K(\delta)$ is non-semisimple, we see that there is no $r\in\mathbb{Z}$ that allows us to express the decomposition matrix in characteristic $p$ as a product as above.
\end{example}
\subsection*{Acknowledgements}
The author would like to thank Maud De Visscher for her helpful comments and discussions.
\bibliographystyle{amsalpha}
\bibliography{decompositionmatrices}
~\\
\textsc{Centre for Mathematical Science, City University London, Northampton Square, London, EC1V 0HB, United Kingdom}\\
\emph{E-mail address:} \verb+oliver.king.1@city.ac.uk+
\end{document}